\newcommand{\scal}[2]{\langle #1,#2\rangle}
\newcommand{\mabfa}{{\boldsymbol a}}
\newcommand{\mabfb}{{\boldsymbol b}}
\newcommand{\mabfj}{{\boldsymbol j}}
\newcommand{\mabfk}{{\boldsymbol k}}
\newcommand{\rr}[1]{\mathbf R^{#1}}
\newcommand{\zz}[1]{\mathbf Z^{#1}}
\newcommand{\nm}[2]{\Vert #1\Vert _{#2}}
\newcommand{\op}{\operatorname{Op}}
\newcommand{\sets}[2]{\{ \, #1\, ;\, #2\, \} }
\newcommand{\Sets}[2]{\left \{ \, #1\, ;\, #2\, \right \} }
\newcommand{\ep}{\varepsilon}
\newcommand{\fy}{\varphi}
\newcommand{\cdo}{\, \cdot \, }
\newcommand{\wpr}{{\text{\footnotesize $\#$}}}
\newcommand{\vrum}{\vspace{0.1cm}}
\newcommand{\Op}{\operatorname{Op}}
\newcommand{\GL}{\mathbf{M}}
\newcommand{\rd}{\mathbf{R} ^{d}}
\newcommand{\rdd}{\mathbf{R} ^{2d}}
\newcommand{\zd}{\zz {d}}
\newcommand{\dbar}{d\hspace*{-0.08em}\bar{}\hspace*{0.1em}}
\newcommand{\maclH}{\mathcal H}
\newcommand{\maclS}{\mathcal S}
\newcommand{\mascB}{\mathscr B}
\newcommand{\mascF}{\mathscr F}
\newcommand{\mascP}{\mathscr P}
\newcommand{\mascS}{\mathscr S}
\numberwithin{equation}{section}          
\newtheorem{thm}{Theorem}
\numberwithin{thm}{section}
\newcommand{\rubrik}{}
\newtheorem{prop}[thm]{Proposition}
\newtheorem{cor}[thm]{Corollary}
\newtheorem{lemma}[thm]{Lemma}
\theoremstyle{definition}
\newtheorem{defn}[thm]{Definition}
\theoremstyle{remark}
\newtheorem{rem}[thm]{Remark}
\author{Joachim Toft}
\address{Department of Mathematics,
Linn{\ae}us University, V{\"a}xj{\"o}, Sweden}
\email{joachim.toft@lnu.se}
\author{R{\"u}ya {\"U}ster}
\address{Department of Mathematics, Istanbul University, Istanbul, Turkey}
\title{Pseudo-differentialoperators on Orlicz modulation spaces}
\begin{document}

\begin{abstract}
We deduce continuity properties for pseudo-differential 
operators with symbols in quasi-Banach Orlicz modulation spaces 
when rely on other quasi-Banach Orlicz modulation spaces. 
In particular we extend certain results in \cite{GH1,GH2,Toft2,Toft10,Toft16}.
\end{abstract}

\keywords{Orlicz, quasi-Banach, quasi-Young functionals}

\subjclass[2010]{primary: 35S05, 46E30, 46A16, 42B35
secondary: 46F10}

\maketitle

\section{Introduction}\label{sec0}

\par


\par

In the paper we deduce continuity properties for
pseudo-differential operators when acting on quasi-Banach
Orlicz modulation spaces. For example, for a pseudo-differential
operator $\op (a)$ with the symbol $a$ we show that
the following is true:
\begin{itemize}
\item suppose that $q_0\in (0,1]$, $\Phi _j$ are quasi-Young
functions which satisfy $\Phi _j(t)\lesssim t^{q_0}$ near origin,
and that $a$ belongs to the classical
modulation space $M^{\infty ,q_0}(\rr {2d})$. Then
$\op (a)$ is continuous on the quasi-Banach Orlicz
modulation space $M^{\Phi _1,\Phi _2}(\rr d)$;

\vrum

\item suppose that $\Phi$ is a quasi-Young
function which satisfy $t\lesssim \Phi (t)$ near origin,
and that $a$ belongs to $M^{\Phi}(\rr {2d})$. Then
$\op (a)$ is continuous from $M^\infty (\rr d)$
to $M^{\Phi}(\rr d)$;

\vrum

\item suppose that $\Phi _0$ is a Young
function and $\Phi _0^*$  is the complementary
Young function,
and that $a$ belongs to $M^{\Phi _0}(\rr {2d})$. Then
$\op (a)$ is continuous from $M^{\Phi _0^*} (\rr d)$
to $M^{\Phi}(\rr d)$.
\end{itemize}

\noindent
(We refer to \cite{Ho1} and Section \ref{sec1}
for notations.)

\par

More generally, we deduce weighted versions
of such continuity results.
In particular we extend some continuity properties
for pseudo-differential operators when acting on
(ordinary) modulation spaces, e.{\,}g. in
\cite{CorNic,CorNic2,GH1,GH2,Toft2,Toft10,Toft16}.

\par

Essential parts of our analysis are based on
 \cite{SchF} by C. Schnackers
and H. F{\"u}hr concerning Orlicz modulation
spaces, and on \cite{ToUsNaOz} concerning
quasi-Banach Orlicz modulation spaces. In these
approaches, general properties and aspects
on quasi-Banach Orlicz spaces given in
\cite{HaH} by P. Harjulehto and P. H{\"a}st{\"o}
are fundamental. In this respect, we show that
for mixed quasi-Banach Orlicz modulation spaces
like $M^{\Phi _1,\Phi _2}_{(\omega )}(\rr {2d})$
we have $M^{\Phi ,\Phi}_{(\omega )}(\rr {2d})
=M^{\Phi}_{(\omega )}(\rr {2d})$ when
$\Phi$, $\Phi _1$ and $\Phi _2$ are quasi-Young
functions. This leads to convenient improvement
of the style of the continuity results for our
pseudo-differential operators when acting
on quasi-Banach Orlicz modulation spaces.

\medspace

In some situations it might be beneficial to replace 
Lebesgue norm estimates with more
refined Orlicz norm estimates. This may
appear when dealing with
certain non-linear
functionals. For example,
in statistics or statistical physics, the entropy applied on probability density
functions $f$ on $\rr d$ is given by
$$
\mathsf E(f) = -\int _{\rr d}f(x)\log f(x)\, dx.
$$
When investigating $\mathsf E$, it might be
more efficient
to replace the pair of Lebesgue spaces
$(L^1,L^\infty )$ by the pair of Orlicz spaces
$(L\log (L+1),L^{\cosh -1})$, where the Young functions
are given by
$$
\Phi (t) = t\log (1+t)
\quad \text{and}\quad \Phi (t) = \cosh(t)-1,
$$
respectively. We also observe that
the Zygmund space $L\log ^+L$
is an Orlicz space related to
Hardy-Littlewood maximal functions.
(See \cite{MajLab1,MajLab2} and the references 
therein.)

\par

Such questions are also relevant when
investigating localized Fourier
transforms like short-time
Fourier transforms $V_\phi f$ because
of the entropy conditions
$$
\mathsf E(|V_\phi f|^2)\ge C,
$$
for some constant $C$, when
$$
\nm f{L^2}=\nm g{L^2}=\nm \phi{L^2}=1.
$$
(See \cite{LieSol}.)
We remark that such refined Fourier transforms
are indispensable tools within 
time-frequency,
signal processing and certain parts of
quantum mechanics.

\par

In time-frequency analysis and signal processing,
non-stationary filters can be modelled by
pseudo-differential operators
$f\mapsto \op (a)f$, where
the symbols $a$ are determined by
time and frequency varying
filters, the target functions $f$ are
the original signals and $\op (a)f$
are the reflected signals. In such
situations it is suitable to discuss
continuity properties by means of
certain types of time-frequency
invariant (quasi-)Banach spaces. This leads
to modulation spaces.

\par

The classical
modulation spaces is a family of
function and distribution spaces, introduced
by Feichtinger in \cite{F1}. Here the
modulation spaces are defined by imposing
a weighted mixed Lebesgue norm estimate
on the short-time Fourier transforms of
the involved functions and distributions.
The theory
has thereafter been extended and generalized,
especially by Feichtinger and Gr{\"o}chenig
in \cite{FeiGro1,FeiGro2}, where the theory
of (Banach) modulation spaces was put
into the context of coorbit space theory.
A less abstract extension of the
classical modulation spaces is performed
in \cite{Fei5}, where Feichtinger
replace the mixed Lebesgue norm estimates
in \cite{F1} with more general translation
invariant norms to solid
Banach function spaces.

\par

Some extensions to the quasi-Banach case
have thereafter been performed in
e.{\,}g. \cite{GaSa,Rau1,Rau2,Toft15,Toft18}.

\par

In \cite{SchF}, F{\"u}hr and Schnacker
study Orlicz modulation spaces of the form
$M^{\Phi _1,\Phi _2}$, where $\Phi _1$ and
$\Phi _2$ are Young functions. That is,
they consider modulation spaces in \cite{Fei5},
where the solid Banach
function spaces are Orlicz spaces,
a naturally generalization of
$L^p$ spaces which contain certain Sobolev spaces
as subspaces. In
particular their investigations also
include the classical modulation spaces in
\cite{F1}, since these spaces are obtained
by choosing
$$
\Phi _j(t) = t^p
\quad \text{or}\quad
\Phi _j(t) =
\begin{cases}
0, & t\le 1,
\\[1ex]
\infty , & t>1.
\end{cases}
$$

\par

The analysis in \cite{SchF} is extended in 
\cite{ToUsNaOz} to quasi-Banach weighted
Orlicz modulation spaces,
$M^{\Phi _1,\Phi _2}_{(\omega)}(\rd )$,
where $\Phi _1$, $\Phi _2$ are quasi-Young 
functions of certain degrees
and $\omega$ is a suitable weight
function on $\rr {2d}$. In particular,
it is here allowed to let $\Phi _j(t)=t^p$
for every $p>0$ (instead of $p\ge 1$ as
in \cite{SchF}), which implies that
any modulation space
$M^{p,q}_{(\omega )}(\rr d)$
for $p,q\in (0,\infty $
are included in the studies in 
\cite{ToUsNaOz}.

\par

In the paper, our deduced continuity
for pseudo-differential operators,
are based on the various properties of
quasi-Banach Orlicz modulation spaces,
obtained in \cite{ToUsNaOz}.

\par

\section{Preliminaries}\label{sec1}

\par

In this section we recall some facts for Gelfand-Shilov spaces, Orlicz
spaces, Orlicz modulation spaces and pseudo-differential operators. First we
discuss some useful properties of Gelfand-Shilov spaces. Thereafter we
recall some classes of weight functions which are used later on
in the definition of Orlicz modulation spaces. Then Orlicz spaces
are In Subsections \ref{subsec1.3} and \ref{subsec1.4} we define and
present some properties for Orlicz spaces and Orlicz modulation spaces.
We conclude the section by discussing Gabor analysis for Orlicz
modulation spaces and pseudo-differential operators.

\par

\subsection{Gelfand-Shilov spaces}\label{subsec1.1}
We start by discussing Gelfand-Shilov spaces and their properties.
Let $0<s\in \mathbf R$ be fixed. Then the Gelfand-Shilov
pace $\mathcal S_{s}(\rr d)$
($\Sigma _{s}(\rr d)$) of Roumieu type (Beurling type) with parameter $s$
consists of all $f\in C^\infty (\rr d)$ such that
\begin{equation}\label{gfseminorm}
\nm f{\mathcal S_{s,h}}\equiv \sup \frac {|x^\beta \partial ^\alpha
f(x)|}{h^{|\alpha  + \beta |}(\alpha ! \beta !)^s}
\end{equation}
 is finite for some $h>0$ (for every $h>0$). Here the supremum should be taken
 over all $\alpha ,\beta \in \mathbf N^d$ and $x\in \rr d$. We equip
 $\mathcal S_{s}(\rr d)$ ($\Sigma _{s}(\rr d)$) by the canonical inductive limit
 topology (projective limit topology) with espect to $h>0$, induced by
 the semi-norms in \eqref{gfseminorm}.

 \par

 For any $s,s_0>0$ such that $1/2 \leq s_0<s$ we have
\begin{equation}\label{GSembeddings}
 \begin{alignedat}{3}
 \maclS _{s_0}(\rr d)
 &\hookrightarrow &
 \Sigma _{s}(\rr d)
 &\hookrightarrow &
 \maclS _s(\rr d)
 &\hookrightarrow
 \mascS (\rr d),
  \\[1ex]
 \mascS '(\rr d)
 &\hookrightarrow &
 \maclS _s' (\rr d)
 &\hookrightarrow &
 \Sigma _{s}'(\rr d)
 &\hookrightarrow
 \maclS _{s_0}'(\rr d),
 \end{alignedat}
 \end{equation}
 with dense embeddings.
 Here $A\hookrightarrow B$ means that
 the topological spaces $A$ and $B$ satisfy $A\subseteq B$ with
 continuous embeddings.
 The space $\Sigma _s(\rr d)$ is a Fr{\'e}chet space
 with seminorms $\nm \cdot {\mathcal S_{s,h}}$, $h>0$. Moreover,
 $\Sigma _s(\rr d)\neq \{ 0\}$, if and only if $s>1/2$, and
 $\maclS _s(\rr d)\neq \{ 0\}$, if and only
 if $s\ge 1/2$.

 \medspace

 The \emph{Gelfand-Shilov distribution spaces} $\mathcal S_{s}'(\rr d)$
and $\Sigma _s'(\rr d)$ are the dual spaces of $\mathcal S_{s}(\rr d)$ and
$\Sigma _s(\rr d)$, respectively.  As for the Gelfand-Shilov spaces there
 is a canonical projective limit topology (inductive limit topology) for $\maclS _{s}'(\rr d)$
 ($\Sigma _s'(\rr d)$). (Cf. \cite{GS,Pil1,Pil3}.)

 \par

 From now on we let $\mathscr F$ be the Fourier transform which
 takes the form
 $$
 (\mathscr Ff)(\xi )= \widehat f(\xi ) \equiv (2\pi )^{-\frac d2}\int _{\rr
 {d}} f(x)e^{-i\scal  x\xi }\, dx
 $$
 when $f\in L^1(\rr d)$. Here $\scal \cdo \cdo$ denotes the usual
 scalar product on $\rr d$. The map $\mathscr F$ extends
 uniquely to homeomorphisms on $\mathscr S'(\rr d)$,
 from $\mathcal S_s'(\rr d)$ to $\mathcal S_s'(\rr d)$ and
 from $\Sigma _s'(\rr d)$ to $\Sigma _s'(\rr d)$. Furthermore,
 $\mascF$ restricts to
 homeomorphisms on $\mathscr S(\rr d)$, from
 $\mathcal S_s(\rr d)$ to $\mathcal S_s(\rr d)$ and
 from $\Sigma _s(\rr d)$ to $\Sigma _s(\rr d)$,
 and to a unitary operator on $L^2(\rr d)$.

 \par

Gelfand-Shilov spaces can in convenient
 ways be characterized in terms of estimates of the functions and their Fourier
 transforms. More precisely, in \cite{ChuChuKim} it is proved that
 if $f\in \mascS '(\rr d)$ and $s>0$, then $f\in \maclS _s(\rr d)$
 ($f\in \Sigma _s(\rr d)$), if and only if
 \begin{equation}\label{Eq:GSFtransfChar}
 |f(x)|\lesssim e^{-r|x|^{\frac 1s}}
 \quad \text{and}\quad
 |\widehat f(\xi )|\lesssim e^{-r|\xi |^{\frac 1s}},
 \end{equation}
 for some $r>0$ (for every $r>0$).
 Here $r_1(\theta ) \lesssim r_2(\theta )$ means that $r_1(\theta ) \le c \cdot  r_2(\theta )$
 holds uniformly for all $\theta$
 in the intersection of the domains of $r_1$ and $r_2$
 for some constant $c>0$, and we
 write $r_1\asymp r_2$
 when $r_1\lesssim r_2 \lesssim r_1$.

 \par

 Gelfand-Shilov spaces and their distribution spaces can also
 be characterized by estimates of short-time Fourier
 transforms, (see e.{\,}g. \cite{GZ,Toft18}).
 More precisely, let $\phi \in \maclS _s (\rr d)$ be fixed. Then the \emph{short-time
 Fourier transform} $V_\phi f$ of $f\in \maclS _s '
 (\rr d)$ with respect to the \emph{window function} $\phi$ is
 the Gelfand-Shilov distribution on $\rr {2d}$, defined by
 \begin{equation} \label{Eq:ShorttimeF}
      V_\phi f(x,\xi )  =
 \mascF (f \, \overline {\phi (\cdo -x)})(\xi ).
 \end{equation}
 If $f ,\phi \in \maclS _s (\rr d)$, then it follows that
 $$
 V_\phi f(x,\xi ) = (2\pi )^{-\frac d2}\int f(y)\overline {\phi
 (y-x)}e^{-i\scal y\xi}\, dy .
 $$

\par

\subsection{Weight functions}\label{subsec1.2}

\par

A \emph{weight} or \emph{weight function} on $\rr d$ is a
positive function $\omega
\in  L^\infty _{loc}(\rr d)$ such that $1/\omega \in  L^\infty _{loc}(\rr d)$.
The weight $\omega$ is called \emph{moderate},
if there is a positive weight $v$ on $\rr d$ such that
\begin{equation}\label{moderate}
\omega (x+y) \lesssim \omega (x)v(y),\qquad x,y\in \rr d.
\end{equation}
If $\omega$ and $v$ are weights on $\rr d$ such that
\eqref{moderate} holds, then $\omega$ is also called
\emph{$v$-moderate}.
We note that \eqref{moderate}
implies that $\omega$ fulfills
the estimates
\begin{equation}\label{moderateconseq}
v(-x)^{-1}\lesssim \omega (x)\lesssim v(x),\quad x\in \rr d.
\end{equation}
We let $\mascP _E(\rr d)$ be the set of all moderate weights on $\rr d$.

\par

It can be proved that if $\omega \in \mascP _E(\rr d)$, then
$\omega$ is $v$-moderate for some $v(x) = e^{r|x|}$, provided the
positive constant $r$ is large enough (cf. \cite{Gc2.5}). That is,
\eqref{moderate} implies
\begin{equation}\label{Eq:weight0}
\omega (x+y) \lesssim \omega(x) e^{r|y|}
\end{equation}
for some $r>0$. In particular, \eqref{moderateconseq} shows that
for any $\omega \in \mascP_E(\rr d)$, there is a constant $r>0$ such that
$$
e^{-r|x|}\lesssim \omega (x)\lesssim e^{r|x|},\quad x\in \rr d.
$$

\par

We say that $v$ is
\emph{submultiplicative} if $v$ is even and
\eqref{moderate}
holds with $\omega =v$. In the sequel, $v$ and $v_j$ for
$j\ge 0$, always stand for submultiplicative weights if
nothing else is stated.

\par

We let $\mascP ^{0} _E(\rd)$ be the set of all $\omega\in \mascP _E(\rd)$
such that \eqref{Eq:weight0} holds for every $r>0$. We also let $\mascP (\rd)$
be the set of all $\omega\in \mascP _E(\rd)$ such that
$$
\omega (x+y) \lesssim \omega(x) (1+|y|)^r
$$
for some $r>0$.
Evidently,
$$
\mascP (\rd) \subseteq \mascP ^{0} _E(\rd) \subseteq \mascP _E(\rd).
$$

\par

\subsection{Orlicz Spaces}\label{subsec1.3}

\par

In this subsection we provide an overview
of some basic definitions and state some
technical results that will be needed.

\par

First we recall some facts concerning
Young functions and Orlicz
spaces. (See \cite{Rao,HaH}.)

\par

\begin{defn}\label{convex f.}
A function $\Phi :\mathbf R \rightarrow
\mathbf R \cup \{ \infty\}$ is called \emph{convex} if
\begin{equation*}
\Phi (s_1 t_1+ s_2 t_2)
\leq s_1 \Phi (t_1)+s_2\Phi (t_2)
\end{equation*}
when
$s_j,t_j\in \mathbf{R}$
satisfy $s_j \ge 0$ and
$s_1 + s_2 = 1,\ j=1,2$.
\end{defn}
We observe that $\Phi$ might not
 be continuous, because we permit
 $\infty$ as function value. For example,
$$
\Phi (t)=
\begin{cases}
  c,&\text{when}\ t \leq a
  \\[1ex]
   \infty ,&\text{when}\ t>a
\end{cases}
$$
is convex but discontinuous at $t=a$.

\par

\begin{defn}\label{Young func.}
Let $r_0\in(0,1]$, $\Phi _0$ and
$\Phi$ be functions from $[0,\infty)$ to $[0,\infty]$.
Then $\Phi _0$ is called a \emph{Young function} if
\begin{enumerate}
  \item $\Phi _0$ is convex,

  \vrum

  \item $\Phi _0(0)=0$,

  \vrum

  \item $\lim
\limits_{t\rightarrow\infty} \Phi _0(t)=+\infty$.
\end{enumerate}
The function $\Phi$ is called
\emph{$r_0$-Young function} or
\emph{quasi-Young function of
order $r_0$}, if $\Phi (t)=\Phi _0 (t^{r_0})$, $t \geq 0$,
for some Young function $\Phi _0$.
\end{defn}

\par

It is clear that $\Phi$ in
Definition \ref{Young func.} is
non-decreasing, because if $0\leq t_1\leq t_2$
and $s\in [0,1]$ is chosen such
that $t_1=st_2$ and $\Phi _0$ is the same as in
Definition \ref{Young func.}, then
\begin{equation*}
    \Phi (t_1)=\Phi _0(s^{r_0}t_2^{r_0}+(1-s^{r_0})0)
    \leq s^{r_0}\Phi _0(t_2^{r_0})+(1-s^{r_0})\Phi _0(0)
    \leq \Phi (t_2),
\end{equation*}
since $\Phi (0) = \Phi _0(0)=0$ and $s\in [0,1]$.

\par

\begin{defn}
Let $(\Omega,\Sigma,\mu)$ be a Borel measure
space, with $\Omega \subseteq \rd$, $\Phi _0$ be a Young function and
let $\omega_0 \in \mascP _E(\rr d)$.
\begin{enumerate}
    \item $L^{\Phi _0}_{(\omega_0)}(\mu)$ consists
    of all $\mu$-measurable functions
    $f:\Omega \rightarrow
    \mathbf C$ such that
    $$
    \Vert f\Vert_{L^{\Phi _0}_{(\omega_0)}(\mu)}=\inf  \Sets{\lambda>0}{\int_\Omega \Phi _0
    \left (
    \frac{|f(x) \cdot \omega_0 (x)|}{\lambda}
    \right )
    d\mu (x)\leq 1}
    $$
is finite. Here $f$ and $g$ in $L^{\Phi _0}_{(\omega_0)}(\mu)$ are equivalent if $f=g$ a.e.

\vrum

\item Let $\Phi$ be a quasi-Young
function of order $r_0\in (0,1]$,
given by $\Phi (t)=\Phi _0(t^{r_0})$,
$t\geq 0$, for some Young function $\Phi _0$. Then $L^ \Phi _{(\omega_0)}(\mu)$ consists
of all $\mu$-measurable functions
$f:\Omega \rightarrow \mathbf C$ such that
$$
\Vert f\Vert_{L^{\Phi}_{(\omega_0)}(\mu)}
=
(\Vert|f \cdot \omega_0 |^{r_0}\Vert_{L^{\Phi _0}(\mu)})^{1/r_0}
$$
is finite.
\end{enumerate}
\end{defn}

\par

\begin{rem}
Let $\Phi$, $\Phi _0$ and $\omega_0$ be the
same as in Definition \ref{Young func.}.
Then it follows by straight-forward computation that
$$
\Vert f\Vert_{L^\Phi _{(\omega_0)}(\mu )}=\inf\Sets{\lambda>0}
{\int_\Omega \Phi _0
    \left (
\frac{|f(x) \cdot \omega_0 (x)|^{r_0}}{\lambda^{r_0}}
\right)
d\mu(x)\leq 1}.
$$
\end{rem}

\par

\begin{defn}\label{d1}
Let $(\Omega_j ,\Sigma_j ,\mu_j )$ be Borel
measure spaces, with $\Omega_j \subseteq \rd$, $r_0\in (0,1]$, $\Phi _{j}$ be
$r_0$-Young functions, $j=1,2$ and let $\omega \in \mascP _E (\rdd)$. Then
the mixed quasi-norm Orlicz space ${L^{\Phi _1, \Phi _2}_{(\omega)}}
= {L^{\Phi _1, \Phi _2}_{(\omega)}}(\mu_1 \otimes \mu_2)$ consists of all $\mu_1 \otimes
\mu_2$-measurable functions $f:\Omega_1 \times \Omega_2 \rightarrow
\mathbf C$ such that
$$
\Vert f\Vert_{L^{\Phi _1, \Phi _2}_{(\omega)}} \equiv
\Vert f_{1,\omega}\Vert_{L^{\Phi _2}},
$$
is finite, where
$$
f_{1,\omega}(x_2)=\Vert f(\cdo ,x_2) \omega(\cdo, x_2)\Vert_{L^{\Phi _{1}}}.
$$
\end{defn}

\par

If $r_0=1$ in Definition \ref{d1}, then $L^{\Phi _1, \Phi _2}_{(\omega)}
(\mu_1 \otimes \mu_2)$ is a Banach space and is called a mixed norm Orlicz space.

\par

\begin{rem}
Suppose $\Phi _j$ are quasi-Young
functions of order $q_j\in (0,1]$, $j=1,2$.
Then both $\Phi _{1}$ and $\Phi _{2}$
are quasi-Young functions of order
$r_0=\min(q_1,q_2)$.
\end{rem}

\par

Let $\Lambda \subseteq \rr d$ be a lattice, i.{\,}e., $\Lambda$ is given
by
$$
\Lambda = \sets {n_1e_1+\cdots +n_de_d}{(n_1,\dots ,n_d)\in \zz d}
$$
for some basis $e_1,\dots ,e_d$ of $\rr d$. 
Then $\ell_0'(\Lambda )$ is the set of all
formal sequences
$$
\{ a(n)\} _{n\in \Lambda} =
\sets{a(n)}{n\in \Lambda}\subseteq \mathbf C,
$$
and let $\ell_0 (\Lambda )$ be the set of all
sequences $\{ a(n)\} _{n\in \Lambda}$ such that $a(n)\neq 0$ for at
most finite numbers of $n$. We observe that
$$
\Lambda ^2=\Lambda \times \Lambda
=
\sets {(x,\xi )}{x,\xi \in \Lambda}
$$
is a lattice in $\rr {2d}\simeq \rr d\times \rr d$.

\par

\begin{rem}
Let $\Lambda \subseteq \rr d$ be a lattice, $\Phi, \Phi _1$ and $\Phi _2$
be $r_0$-Young functions, $\omega_0, v_0
\in \mascP_E (\rd)$ and  $\omega, v \in \mascP_E (\rdd )$ be such that $\omega
_0$ and $\omega$ are $v_0$- respectively $v$-moderate. (In the sequel it is
understood that all lattices contain $0$.) Then we set
$$
L^{\Phi}_{(\omega_0)}(\rd) = L^{\Phi}_{(\omega_0)}(\mu)
\quad
\text{and}
\quad
L^{\Phi _1, \Phi _2}_{(\omega_0)}(\rdd ) =
L^{\Phi _1,\Phi _2}_{(\omega_0)}(\mu \otimes \mu),
$$
when $\mu$ is the Lebesgue measure on $\rr d$.
If instead
$\mu$ is the standard (Haar) measure on $\Lambda$, i.e.
$\mu(n)=1,\ n\in \Lambda$, and
$$
\ell^{\Phi}_{(\omega)}(\Lambda ) = \ell^{\Phi}_{(\omega)}(\mu)
\quad
\text{and}
\quad
\ell^{\Phi _1, \Phi _2}_{(\omega)}(\Lambda \times \Lambda )
= \ell^{\Phi _1,\Phi _2}_{(\omega)}(\mu \otimes \mu).
$$
Evidently, $\ell^{\Phi _1, \Phi _2}
_{(\omega)}(\Lambda \times \Lambda )\subseteq
\ell _0'(\Lambda \times \Lambda )$.
\end{rem}

\par

\begin{lemma}\label{T}
Let $\Phi , \Phi _j$ be Young functions, $j=1,2$, $\omega_0, v_0 \in \mascP_E (\rd)$
and $\omega, v \in \mascP_E (\rdd)$ be such that $\omega_0$ is $v_0$-moderate
and $\omega$ is $v$-moderate. Then $L^{\Phi}_{(\omega_0)}(\rd)$ and
$L^{\Phi _{1}, \Phi _{2}}_{(\omega)}(\rr{2d})$ are
 invariant under translations, and
$$
\Vert f(\cdo - x)\Vert_{L^\Phi _{(\omega_0)}} \lesssim
\Vert f\Vert_{L^\Phi _{(\omega_0)}} v_0(x), \quad f\in L^\Phi _{(\omega_0)}(\rd),\ x\in \rd\;,
$$
and
$$
\Vert f(\cdo - (x,\xi))\Vert_{L^{\Phi _{1}, \Phi _{2}}_{(\omega )}}
\lesssim
\Vert f\Vert_{L^{\Phi _{1}, \Phi _{2}}_{(\omega)}}v(x,\xi ),
\quad f\in L^{\Phi _{1}, \Phi _{2}}_{(\omega )} (\rdd ),\ (x,\xi ) \in \rdd.
$$
\end{lemma}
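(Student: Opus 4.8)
The plan is to reduce the claim to the basic translation estimate for the (weighted) Orlicz spaces on $\rd$ and then build up to the mixed space on $\rdd$ by iterating in the two variables. First I would treat the single-variable case: for $f\in L^\Phi_{(\omega_0)}(\rd)$ and $x\in\rd$, observe that $|f(y-x)\omega_0(y)| = |f(y-x)\omega_0((y-x)+x)|$, and use that $\omega_0$ is $v_0$-moderate, so $\omega_0(y)\lesssim \omega_0(y-x)v_0(x)$ pointwise. When $\Phi=\Phi_0$ is a genuine Young function this gives, for any admissible $\lambda$ in the Luxemburg infimum for $f$, that $C v_0(x)\lambda$ is admissible for $f(\cdo-x)$ (up to the constant hidden in $\lesssim$, which can be absorbed by convexity/monotonicity of $\Phi_0$ together with the normalization $\Phi_0(0)=0$), whence $\|f(\cdo-x)\|_{L^{\Phi_0}_{(\omega_0)}}\lesssim v_0(x)\|f\|_{L^{\Phi_0}_{(\omega_0)}}$. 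The quasi-Young case $\Phi(t)=\Phi_0(t^{r_0})$ then follows immediately from the definition $\|g\|_{L^\Phi_{(\omega_0)}} = \big(\||g\,\omega_0|^{r_0}\|_{L^{\Phi_0}}\big)^{1/r_0}$, since translating $g$ translates $|g\,\omega_0|^{r_0}$ in the same way, and $v_0(x)^{r_0}\le v_0(x)$ after possibly enlarging $v_0$ (or one simply notes $v_0^{r_0}$ is again submultiplicative and absorb it).

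Next I would handle the mixed space $L^{\Phi_1,\Phi_2}_{(\omega)}(\rdd)$. Here the translation by $(x,\xi)$ acts separately in the two slots: if $g(y_1,y_2)=f(y_1-x,y_2-\xi)$ then $g(\cdo,y_2) = f(\cdo-x, y_2-\xi)$. I would first dominate the weight: since $\omega$ is $v$-moderate, $\omega(y_1,y_2)\lesssim \omega(y_1-x,y_2-\xi)\,v(x,\xi)$, and $v$ may be taken of the product/submultiplicative form so that the factor $v(x,\xi)$ can be pulled out as a single constant. Applying the one-variable result in the $y_1$-slot gives
\[
g_{1,\omega}(y_2) = \|f(\cdo-x,y_2-\xi)\,\omega(\cdo,y_2)\|_{L^{\Phi_1}} \lesssim v(x,\xi)\,\|f(\cdo,y_2-\xi)\,\widetilde\omega(\cdo,y_2-\xi)\|_{L^{\Phi_1}},
\]
where $\widetilde\omega$ is a suitable moderate weight comparable to $\omega$ after the shift; the right-hand side is (a constant times $v(x,\xi)$ times) $f_{1,\omega}(y_2-\xi)$ up to the pointwise weight manipulation. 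Then I take the $L^{\Phi_2}$-quasi-norm in $y_2$ and apply the one-variable translation estimate once more (now with the weight $1$, or with the residual $y_2$-dependence of $\omega$ folded into $v$), obtaining $\|g\|_{L^{\Phi_1,\Phi_2}_{(\omega)}}\lesssim v(x,\xi)\|f\|_{L^{\Phi_1,\Phi_2}_{(\omega)}}$.

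The main obstacle I anticipate is the careful bookkeeping of the weight in the mixed case: $\omega(y_1,y_2)$ is a function of both variables, so when I translate only $y_1$ inside the inner norm I must keep $y_2$ frozen and compare $\omega(\cdo,y_2)$ with $\omega(\cdo-x,y_2)$, which is legitimate since moderateness \eqref{moderate} holds jointly on $\rdd$ and in particular in the $y_1$-direction uniformly in $y_2$; similarly for the outer translation in $y_2$. One must also be slightly careful that the constants produced are uniform in $(x,\xi)$ and that the quasi-triangle constants of the $r_0$-homogeneous norms do not accumulate — but since each step uses the estimate only once, this is harmless. Everything else (convexity of $\Phi_0$, $\Phi_0(0)=0$, monotonicity from the remark following Definition \ref{Young func.}, and the reduction $\Phi(t)=\Phi_0(t^{r_0})$) is routine. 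I would also remark that invariance under translations as a set statement is an immediate consequence of the quantitative bounds, since $v_0, v\in L^\infty_{\mathrm{loc}}$ are finite at every point.
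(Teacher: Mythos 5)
Your argument is correct and follows essentially the same route as the paper's proof: use $v$-moderateness pointwise to trade $\omega$ (resp.\ $\omega_0$) for its translate times the constant $v(x,\xi)$ (resp.\ $v_0(x)$), then invoke translation invariance of the Luxemburg-type norm when both the function and the weight are translated simultaneously; the paper simply does this in one display for the mixed case rather than iterating slot by slot. Your worry about $v_0(x)^{r_0}$ versus $v_0(x)$ is unnecessary: in the quasi-Young reduction the factor appears as $v_0(x)^{r_0}$ inside the $L^{\Phi_0}$-norm and the final $1/r_0$-th power returns exactly $v_0(x)$, so no enlargement of $v_0$ is needed.
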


\par

\begin{proof}
We only prove the assertion for $L^{\Phi _{1}, \Phi _{2}}_{(\omega )}(\rr{2d})$.
The other part follows by similar arguments and is left for the reader.

\par

We have $\Phi _j(t) = \Phi _{0,j}(t^{r_0}),\ t\geq 0$, for some Young functions
$\Phi _{0,j}$, $j=1,2$. This gives
\begin{multline*}
\nm {f(\cdo - (x,\xi ))}{L^{\Phi _{1}, \Phi _{2}}_{(\omega )}}=
\left(
\nm{|f(\cdo - (x,\xi ))\omega |^{r_0}}{L^{\Phi _{0,1}, \Phi _{0,2}}}
\right)^{\frac{1}{r_0}}
\\[1ex]
\lesssim
\left(
\nm{|f(\cdo - (x,\xi ))\omega(\cdo - (x,\xi ))v(x,\xi )|^{r_0}}{L^{\Phi _{0,1}, \Phi _{0,2}}}
\right)^{\frac{1}{r_0}}
\\[1ex]
=
\left (
\nm{|f\cdot \omega |^{r_0}}{L^{\Phi _{0,1}, \Phi _{0,2}}}
\right )^{\frac{1}{r_0}}
\cdot v(x,\xi )=
\nm{f}{L^{\Phi _{1}, \Phi _{2}}_{(\omega )}}\cdot
v(x,\xi ).
\end{multline*}
Here the inequality follows from the fact that $\omega$ is $v$-moderate, and the last two
relations follow from the definitions.
\end{proof}

We refer to \cite{SchF,Rao,HaH} for more facts about Orlicz spaces.

\par

\subsection{Orlicz modulation spaces}\label{subsec1.4}

\par

Before considering Orlicz modulation spaces, we recall the definition of classical
modulation spaces. (Cf. \cite{F1,Fei5}.)

\par

\begin{defn}\label{Def:Orliczmod}
Let $\phi(x) = \pi ^{-\frac{d}{4}}e^{-\frac{|x|^2}{2}},\ x\in \rd$, $p,q\in (0,\infty]$ and
$\omega $ be a weight on $\rdd$.
Then the \emph{modulation spaces} $M^{p,q}_{(\omega)}(\rd)$
is set of all $f\in \maclS _{1/2}'
(\rr d)$ such that $V_\phi f\in L^{p,q}_{(\omega)}(\rr {2d})$.
We equip these spaces with the quasi-norm
\begin{equation*}
\nm f{M^{p,q}_{(\omega)}} \equiv \nm {V_\phi f}{L^{p,q}_{(\omega)}}.
\end{equation*}
Also let $\Phi , \Phi _1,\Phi _2$ be quasi-Young functions.
Then the \emph{Orlicz modulation spaces}
$M^{\Phi}_{(\omega )} (\rd )$ and  $M^{\Phi _{1}, \Phi _{2}}_{(\omega )}(\rd )$ are given by
\begin{equation}\label{Eq:Orliczmod1}
M^{\Phi}_{(\omega )}(\rd )=
\sets{f \in \maclH' _{\flat}(\rd )} {V_\phi f\in
L^{\Phi}_{(\omega )} (\rdd )}
\end{equation}
and
\begin{equation}\label{Eq:Orliczmod2}
M^{\Phi _{1}, \Phi _{2}}_{(\omega )}(\rd )=
\sets{f\in \maclH '_{\flat}(\rd )} { V_\phi f\in
L^{\Phi _{1}, \Phi _{2}} _{(\omega )}(\rdd )}.
\end{equation}
The quasi-norms on $M^{\Phi}_{(\omega )}(\rd )$ and
$M^{\Phi _{1}, \Phi _{2}}_{(\omega )}(\rd )$ are given by
\begin{equation}\label{NE}
\Vert f\Vert_{M^{\Phi}_{(\omega )}} =\Vert V_\phi f\Vert_{L^{\Phi}_{(\omega )}}
\end{equation}
and
\begin{equation}\label{MNE}
\nm f {M^{\Phi _{1}, \Phi _{2}}_{(\omega )}}
=\nm {V_\phi f} {L^{\Phi _{1}, \Phi _{2}}_{(\omega)}}.
\end{equation}
\end{defn}

\par

For conveniency we set
$$
M^{p,q}=M^{p,q}_{(\omega )},
\quad
M^\Phi = M^\Phi _{(\omega )}
\quad \text{and}\quad
M^{\Phi _1,\Phi _2} = M^{\Phi _1,\Phi _2} _{(\omega )}
\quad \text{when}\quad
\omega (x,\xi)=1,
$$
and $M^p=M^{p,p}$ and $M^p_{(\omega)}=M^{p,p}_{(\omega)}$.

\par

We notice that \eqref{NE} and \eqref{MNE} are norms when
$\Phi , \Phi _1$ and $\Phi _2$ are Young functions.
If $\omega \in \mascP_E(\rdd)$ as in Definition \ref{Def:Orliczmod},
then we prove later on that the conditions
$$
\nm {V_\phi f} {L^{\Phi _{1}, \Phi _{2}}_{(\omega)}} <\infty
\quad \text{and}\quad
\nm {V_\phi f} {L^{\Phi}_{(\omega)}}<\infty
$$
are independent of the choices of $\phi$ in $\Sigma_1(\rd)\setminus{0}$ and that
different $\phi$ give rise to equivalent quasi-norms.

\par

Later on we need the following proposition.

 \par

\begin{prop}\label{subsets}
Let $\Phi, \Phi _j$ be Young functions, $j=1,2$, $\omega_0 \in \mascP_E (\rd )$
and $\omega \in \mascP_E (\rdd )$. Then
$$
\mascS (\rd )\subseteq L^{\Phi} (\rd )\subseteq \mascS '(\rd), \quad
\mascS (\rdd )\subseteq L^{\Phi _{1},\Phi _{2}}(\rdd)\subseteq \mascS '(\rdd ),
$$
$$
\Sigma_1 (\rd ) \subseteq L^{\Phi}_{(\omega _0)} (\rd )
\subseteq \Sigma_1 ' (\rd ), \quad \Sigma _1 (\rdd )
\subseteq
L^{\Phi _{1},\Phi _{2}}_{(\omega )}(\rdd )
\subseteq \Sigma_1 ' (\rdd ).
$$
\end{prop}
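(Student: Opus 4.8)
The plan is to reduce everything to the corresponding statement for classical modulation spaces $M^p$, for which the inclusions $\Sigma_1(\rd)\subseteq M^p_{(\omega)}(\rd)\subseteq \Sigma_1'(\rd)$ (and the unweighted $\mascS(\rd)\subseteq M^p(\rd)\subseteq \mascS'(\rd)$ for $p\ge 1$) are classical. The bridge is to sandwich the Orlicz (quasi-)norm between two $L^p$-type (quasi-)norms using only the growth/decay of the Young function near $0$ and near $\infty$, together with the translation invariance already established in Lemma \ref{T}.

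First I would record the elementary one-variable facts: if $\Phi_0$ is a Young function, then by convexity and $\Phi_0(0)=0$ one has $\Phi_0(st)\le s\Phi_0(t)$ for $s\in[0,1]$, so $\Phi_0(t)\lesssim t$ for small $t$ and $\Phi_0(t)\gtrsim t$ for large $t$ (after a normalization $\Phi_0(t_0)>0$ for some $t_0$). Passing to the quasi-Young function $\Phi(t)=\Phi_0(t^{r_0})$ this yields constants and exponents so that, for a fixed Young/quasi-Young function $\Phi$, there are $0<p_1\le p_2\le\infty$ and a weight $v_0\in\mascP_E$ with
\begin{equation*}
\nm{g}{L^{p_1}_{(\omega_0)}}\lesssim \nm{g}{L^{\Phi}_{(\omega_0)}}\lesssim \nm{g}{L^{p_2}_{(\omega_0)}}
\end{equation*}
for compactly supported $g$, and the analogous two-sided bound for the mixed space $L^{\Phi_1,\Phi_2}_{(\omega)}$ against $L^{p_1,q_1}_{(\omega)}$ and $L^{p_2,q_2}_{(\omega)}$, at least locally; globally one must be slightly more careful because $\Phi$ may vanish on an interval near $0$ or equal $+\infty$ beyond some point. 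The cleanest route is: for the upper inclusion $L^{\Phi_1,\Phi_2}_{(\omega)}\subseteq \Sigma_1'$ it suffices to dominate the action of $f=V_\phi^{-1}(F)$ on a test function by $\nm{F}{L^{\Phi_1,\Phi_2}_{(\omega)}}$; for the lower inclusion $\Sigma_1\subseteq L^{\Phi_1,\Phi_2}_{(\omega)}$ it suffices to show that the STFT $V_\phi f$ of $f\in\Sigma_1$ lies in the space, and this follows from the super-polynomial (indeed sub-exponential) decay of $V_\phi f$ established in \eqref{Eq:GSFtransfChar}, which beats the at-most-exponential growth of $\omega$ from \eqref{Eq:weight0} and makes the defining integral $\int \Phi_0(\cdots)$ finite for a suitable $\lambda$.

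Concretely, for $\Sigma_1(\rd)\subseteq L^{\Phi}_{(\omega_0)}(\rd)$: given $f\in\Sigma_1$, \eqref{Eq:GSFtransfChar} gives $|f(x)\omega_0(x)|\lesssim e^{-r|x|^2}$ for every $r>0$ after absorbing the exponential weight bound, so for $\lambda=1$ one gets $\int_{\rd}\Phi_0\big((|f\omega_0|^{r_0}\big)\,dx\lesssim \int_{\rd}\Phi_0\big(Ce^{-r_0 r|x|^2}\big)\,dx$, and using $\Phi_0(t)\lesssim t$ near $0$ this is $\lesssim\int e^{-r_0 r|x|^2}dx<\infty$; rescaling $\lambda$ makes it $\le 1$. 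The mixed case is identical using Definition \ref{d1}. For the dual inclusions $L^{\Phi}_{(\omega_0)}(\rd)\subseteq \Sigma_1'(\rd)$: if $f\in L^{\Phi}_{(\omega_0)}$ and $g\in\Sigma_1$, write $\langle f,g\rangle$ and estimate $|\langle f,g\rangle|\le \int|f\omega_0|\,|g/\omega_0|\,dx$; Hölder's inequality in Orlicz spaces (the generalized Hölder inequality $\int|uv|\le 2\nm u{L^{\Phi_0}}\nm v{L^{\Phi_0^*}}$, valid in the Banach range, plus the $r_0$-homogenization for the quasi-Banach range) reduces the bound to $\nm f{L^{\Phi}_{(\omega_0)}}$ times $\nm{g/\omega_0}{L^{\Phi^*}}$, and the latter is finite because $g/\omega_0$ again has sub-exponential decay by \eqref{Eq:GSFtransfChar}–\eqref{Eq:weight0} and $\Phi^*$ (or its quasi-Banach surrogate) grows at most exponentially-polynomially; hence $f$ defines a continuous functional on $\Sigma_1$. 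Continuity of these inclusions is automatic from the closed graph theorem once set-theoretic containment is shown, but in fact the estimates above are already quantitative. The unweighted statements with $\mascS$ in place of $\Sigma_1$ follow verbatim, replacing sub-exponential decay of Schwartz functions by rapid polynomial decay, which still dominates the polynomial growth allowed for the trivial weight.

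The main obstacle I anticipate is the global two-sided comparison $L^{p_1}\hookrightarrow L^{\Phi}\hookrightarrow L^{p_2}$ when $\Phi_0$ degenerates — i.e., when $\Phi_0\equiv 0$ on $[0,a]$ (forcing $L^{\Phi}\supseteq L^\infty$-type behaviour, so no finite $p_2$ works globally) or $\Phi_0(t)=\infty$ for $t>b$ (forcing an $L^\infty$ constraint, so no $p_1>0$ works globally). This is exactly the phenomenon behind the two displayed ``extreme'' examples $\Phi_j(t)=t^p$ versus the $0/\infty$ step function in the introduction. The fix is not to insist on global $L^p$ sandwiching but to handle the two ends separately as above: decay of test functions/weights at infinity takes care of the $\Phi_0=\infty$ obstruction (test functions are bounded, so the $L^\infty$-constraint is harmless), and convexity near $0$ ($\Phi_0(t)\lesssim t$) together with integrability of $e^{-r|x|^2}$ over $\rd$ takes care of the $\Phi_0\equiv0$ obstruction. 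So the proof should be organized around these direct STFT-decay estimates rather than a black-box $M^{p_1}\subseteq M^\Phi\subseteq M^{p_2}$ inclusion, though the latter can be cited as motivation.
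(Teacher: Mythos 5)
Your argument is correct in substance, but it takes a genuinely different route from the paper. The paper's proof is a two-line reduction: since $L^{\Phi}_{(\omega _0)}(\rd )$ and $L^{\Phi _1,\Phi _2}_{(\omega )}(\rdd )$ are solid and translation invariant (Lemma \ref{T}), hence also modulation invariant, they are sandwiched between the minimal and maximal time-frequency invariant spaces, $M^1_{(v_0)}\subseteq L^{\Phi}_{(\omega _0)}\subseteq M^{\infty}_{(1/v_0)}$ (citing \cite{Gc2,Toft19}), and the claim then follows from the known embeddings of $M^1_{(v)}$ and $M^\infty _{(1/v)}$ into Gelfand--Shilov and Schwartz spaces and their duals. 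You instead work directly on the Orlicz space: for the lower inclusions you combine the decay characterization \eqref{Eq:GSFtransfChar} and the weight bound \eqref{Eq:weight0} with the convexity estimate $\Phi _0(t)\le Ct$ near $0$ to make the Luxemburg integral finite, and for the upper inclusions you use the Orlicz H{\"o}lder inequality with the complementary function $\Phi _0^*$ (iterated in each variable for the mixed case), which yields a quantitative bound on the functional and hence membership in $\Sigma _1'$ or $\mascS '$. Your approach buys an elementary, self-contained and quantitative proof that avoids the coorbit-type minimality/maximality theorem for $M^1_{(v)}$ and $M^\infty _{(1/v)}$, at the price of having to handle by hand the degenerations of $\Phi _0$ near $0$ and at $\infty$ (which you do correctly in your last paragraph) and of checking the mixed-norm H{\"o}lder step; the paper's approach is shorter and treats the weighted, unweighted and mixed cases uniformly. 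Two small slips, neither fatal: functions in $\Sigma _1(\rd )$ decay like $e^{-r|x|}$ for every $r>0$, not $e^{-r|x|^2}$ (the latter is the $\Sigma _{1/2}$/$\maclS _{1/2}$ rate), and your intermediate remark about estimating $f=V_\phi ^{-1}(F)$ via the STFT concerns Orlicz \emph{modulation} spaces, whereas the proposition is about the Orlicz spaces $L^{\Phi}_{(\omega _0)}$ and $L^{\Phi _1,\Phi _2}_{(\omega )}$ themselves; your concrete estimates do target the right spaces, so that sentence should simply be dropped.
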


\par

\begin{proof}
 Let $v_0 \in \mascP _E(\rd)$ and $v\in \mascP _E(\rr {2d})$ be chosen
 such that $\omega_0$ is $v_0$-moderate and $\omega$ is $v$-moderate.
 Since $L^{\Phi}_{(\omega _0)}(\rd )$ and
 $L^{\Phi _{1}, \Phi _{2}}_{(\omega )}(\rdd )$
 are invariant under translation and modulation, we have
 $$
 M^1_{(v_0)}(\rd ) \subseteq
L^{\Phi}_{(\omega _0)}(\rd ) \subseteq M^{\infty}_{(1/v_0)}(\rd ),
$$
and
$$
M^1_{(v)}(\rdd ) \subseteq  L^{\Phi _{1},\Phi _{2}}_{(\omega )}(\rdd )
\subseteq M^{\infty}_{(1/v)}(\rdd ).
$$
(see \cite{Gc2,Toft19}). The result now follows from well-known inclusions between modulation spaces, Schwartz spaces, Gelfand-Shilov spaces, and their duals.
\end{proof}

\par

The next result gives some information about the roles that $\Phi _1$ and
$\Phi _2$ play for $M^{\Phi _{1},\Phi _{2}}$. We
omit the proof since it can be found in \cite{ToUsNaOz}. See also
\cite{SchF} for the Banach case.

\par

\begin{prop}\label{Prop:OrliczModInvariance}
Let $\Phi _j$ and $\Psi _j$, $j=1,2$, be quasi-Young functions,
$\Lambda$ be a lattice in $\rr d$ and $\omega \in \mascP_E(\rdd )$.
Then the following conditions are equivalent:
\begin{enumerate}
    \item $M^{\Phi _{1}, \Phi _{2}}_{(\omega )}(\rd )\subseteq
    M^{\Psi _{1} ,\Psi _{2}}_{(\omega )}(\rd )$;

    \vrum

    \item $\ell^{\Phi _{1}, \Phi _{2}}_{(\omega )}(\Lambda )\subseteq
    \ell^{\Psi _{1}, \Psi _{2}}_{(\omega )}(\Lambda )$;

    \vrum

    \item $\Psi _{j} (t)\lesssim  \Phi _{j} (t)$
    for every $t\in [0, t_0]$, for some $t_0>0$.
\end{enumerate}
\end{prop}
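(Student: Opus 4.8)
The plan is to prove the equivalence by establishing a cycle of implications, exploiting the Gabor-analytic bridge between Orlicz modulation spaces and their discrete sequence-space counterparts. The implication $(2)\Rightarrow(3)$ is the concrete computational core; $(3)\Rightarrow(1)$ and $(3)\Rightarrow(2)$ follow by direct norm comparison, and $(1)\Rightarrow(2)$ uses that the Gabor coefficient map realizes $\ell^{\Phi_1,\Phi_2}_{(\omega)}(\Lambda\times\Lambda)$ as a retract-compatible sequence model of $M^{\Phi_1,\Phi_2}_{(\omega)}$.

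First I would treat $(3)\Rightarrow(2)$. Given a sequence $a=\{a(n)\}\in\ell^{\Phi_1,\Phi_2}_{(\omega)}(\Lambda)$, one unwinds the definition of the mixed quasi-norm: the inner slice quasi-norm $\|a(\cdot,n_2)\omega(\cdot,n_2)\|_{\ell^{\Phi_1}}$ is governed by the Luxemburg-type infimum with $\Phi_{0,1}$ (writing $\Phi_j(t)=\Phi_{0,j}(t^{r_0})$), and similarly for the outer layer. Since the domination $\Psi_j(t)\lesssim\Phi_j(t)$ on $[0,t_0]$ forces, after a harmless rescaling absorbing the implied constant and the behaviour past $t_0$ (where both sides are controlled because the sequences in question are built from finitely supported Gabor data, or by a standard splitting into the part of the index set where the entries are small versus large — the latter being finite for a fixed element), the corresponding Luxemburg infima to satisfy $\|\cdot\|_{\ell^{\Psi_j}}\lesssim\|\cdot\|_{\ell^{\Phi_j}}$, we conclude $\ell^{\Phi_1,\Phi_2}_{(\omega)}\subseteq\ell^{\Psi_1,\Psi_2}_{(\omega)}$ with norm control. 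The same monotonicity argument, applied on $\rdd$ with Lebesgue measure instead of counting measure, yields $L^{\Phi_1,\Phi_2}_{(\omega)}(\rdd)\subseteq L^{\Psi_1,\Psi_2}_{(\omega)}(\rdd)$, and composing with the short-time Fourier transform characterization \eqref{Eq:Orliczmod2} gives $(3)\Rightarrow(1)$ directly.

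For the reverse passages $(1)\Rightarrow(2)$ and $(2)\Rightarrow(3)$ I would invoke the Gabor machinery for Orlicz modulation spaces (available from \cite{ToUsNaOz}, and already used implicitly in Proposition \ref{subsets}): for a suitably chosen window $\phi\in\Sigma_1(\rd)\setminus\{0\}$ and a sufficiently dense lattice $\Lambda$, the analysis map $f\mapsto\{V_\phi f(n)\}_{n\in\Lambda\times\Lambda}$ is bounded from $M^{\Phi_1,\Phi_2}_{(\omega)}(\rd)$ into $\ell^{\Phi_1,\Phi_2}_{(\omega)}(\Lambda\times\Lambda)$, the synthesis map goes the other way, and their composition is the identity on the modulation space; moreover finitely supported sequences are synthesized to genuine modulation-space elements whose analysis coefficients again lie in the sequence space with comparable norm. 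Feeding a finitely supported $a$ through synthesis then analysis shows $(1)\Rightarrow(2)$. For $(2)\Rightarrow(3)$, I would argue by contraposition: if $\Psi_j(t)\lesssim\Phi_j(t)$ fails near the origin for, say, $j=2$, then one can select a sequence $t_k\downarrow 0$ with $\Psi_2(t_k)/\Phi_2(t_k)\to\infty$ and assemble from the corresponding indicator-type sequences on $\Lambda$ (placing mass $t_k$ on longer and longer runs of lattice points in the second variable, with the first variable collapsed to a single point so the inner $\Phi_1$-norm is benign) an explicit element of $\ell^{\Phi_1,\Phi_2}_{(\omega)}$ that is not in $\ell^{\Psi_1,\Psi_2}_{(\omega)}$, using the weight equivalence $e^{-r|n|}\lesssim\omega(n)\lesssim e^{r|n|}$ to keep the weighted and unweighted norms comparable on bounded index sets; the symmetric construction handles $j=1$.

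The main obstacle I anticipate is the careful bookkeeping in $(2)\Rightarrow(3)$: one must engineer the test sequences so that the \emph{inner} quasi-norm does not interfere with the \emph{outer} layer where the near-origin growth discrepancy of $\Phi_2$ versus $\Psi_2$ is being exploited (and vice versa for $j=1$), and one must verify that the Luxemburg infima genuinely diverge — this requires a quantitative lower bound of the form "if $\sum\Psi_2(t_k/\lambda)\le 1$ then $\lambda$ is large", which is where the precise definition of the mixed Orlicz sequence quasi-norm and the convexity/normalization of the underlying Young functions $\Phi_{0,j}$ must be used. The weight $\omega$, being only $v$-moderate with at most exponential growth, contributes only locally and is absorbed, so it does not affect the near-origin comparison, which is the real content of condition (3).
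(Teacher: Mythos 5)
There is a genuine gap, and it sits in the step you rely on to reach condition (1). You claim that the monotonicity argument for $(3)\Rightarrow(2)$, "applied on $\rr {2d}$ with Lebesgue measure instead of counting measure", yields $L^{\Phi _1,\Phi _2}_{(\omega )}(\rr {2d})\subseteq L^{\Psi _1,\Psi _2}_{(\omega )}(\rr {2d})$, and then compose with \eqref{Eq:Orliczmod2}. This inclusion is false in general: for a non-atomic infinite measure the comparison of the Young functions near the origin controls nothing about large function values, and both the behaviour near $0$ and near $\infty$ matter. Concretely, $\Phi _j(t)=t$ and $\Psi _j(t)=t^2$ satisfy (3), but $L^1(\rr {2d})\not\subseteq L^2(\rr {2d})$; the corresponding true statement $M^1\subseteq M^2$ cannot be obtained by an inclusion of the ambient Orlicz spaces of the short-time Fourier transforms. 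The finitely-many-large-entries mechanism that makes your $(3)\Rightarrow(2)$ work for counting measure has no Lebesgue analogue. The repair, consistent with the rest of the paper, is to route $(3)\Rightarrow(1)$ through the discretization: by Proposition \ref{Prop:Gabor} and \eqref{Eq:EquivModNormDisc}, $f\in M^{\Phi _1,\Phi _2}_{(\omega )}(\rr d)$ if and only if its Gabor coefficients lie in $\ell ^{\Phi _1,\Phi _2}_{(\omega )}(\ep \Lambda ^2)$, with equivalent quasi-norms, so the sequence-space inclusion of $(2)$ transfers immediately to the modulation spaces; i.e.\ prove $(3)\Rightarrow(2)\Rightarrow(1)$, not $(3)\Rightarrow(1)$ directly. (The paper itself omits the proof and refers to \cite{ToUsNaOz}, so this is the natural in-spirit argument rather than a literal comparison.)

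A second, smaller but real, gap is in $(1)\Rightarrow(2)$: "feeding a finitely supported $a$ through synthesis then analysis" does not return $a$, because only $D_{\psi}^{\ep ,\Lambda}\circ C_{\phi}^{\ep ,\Lambda}=\mathrm{id}$ on the modulation space holds; $C_{\phi}^{\ep ,\Lambda}\circ D_{\psi}^{\ep ,\Lambda}$ is a Gram-type projection, so your argument only shows that this projection maps $\ell ^{\Phi _1,\Phi _2}_{(\omega )}$ into $\ell ^{\Psi _1,\Psi _2}_{(\omega )}$, not that $a$ itself lies there. One standard fix is to prove $(1)\Rightarrow(3)$ directly by contraposition, testing (1) on finite sums of sufficiently separated time-frequency shifts of the window, whose modulation quasi-norms are comparable to the mixed Orlicz quasi-norms of the coefficient sequences, and to close the equivalence as $(3)\Leftrightarrow(2)$ and $(3)\Leftrightarrow(1)$. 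Two further remarks: in $(2)\Rightarrow(3)$ your weight discussion ("comparable on bounded index sets") does not match your construction, whose blocks escape to infinity; the clean observation is that $a\mapsto a\cdot \omega$ is an isometric bijection between the weighted and unweighted mixed Orlicz sequence spaces on both sides, so one may assume $\omega =1$ from the outset. With that said, your $(3)\Rightarrow(2)$, and the one-variable reductions in $(2)\Rightarrow(3)$ obtained by collapsing one variable to a single lattice point together with the divergence estimate using convexity of the underlying Young functions, are sound.
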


\par

\subsection{Gabor frames}\label{subsec1.5}

\par


\par

\begin{defn}
Let $\omega, v \in \mascP_E (\rdd )$ be such that $\omega$ is
$v$-moderate, $\phi , \psi \in M^1_{(v)}(\rd )$, $\ep >0$ and let
$\Lambda \subseteq \rd$ be a lattice.
\begin{enumerate}
    \item  The \emph{analysis operator} $C_{\phi}^{\ep , \Lambda}$
    is the operator from $M^\infty_{(\omega)}(\rd)$ to $\ell ^\infty _{(\omega)}
    (\ep \Lambda ^2)$, given by
$$
C_{\phi}^{\ep , \Lambda} f
\equiv
\{ V_\phi f( j,\iota)\} _{j,\iota \in \ep \Lambda}.
$$
\item  The \emph{synthesis operator} $D_
\psi^{\ep , \Lambda}$ is the operator from $\ell^\infty_{(\omega)}(\ep \Lambda ^2)$
to $M^\infty_{(\omega)}(\rd)$, given by
$$
D_{\psi}^{\ep , \Lambda} c
\equiv
\sum_{j,\iota \in \ep \Lambda'}
c(j,\iota)e^{i \scal \cdo \iota}\psi (\cdo- j).
$$
\item  The \emph{Gabor frame operator} $S_{\phi ,\psi}^{\ep , \Lambda}$
is the operator on $M^\infty_{(\omega)}(\rd )$, given by
$D_{\psi}^{\ep , \Lambda} \circ C_{\phi}^{\ep , \Lambda},$ i.e.
$$
S_{\phi,\psi}^{\ep , \Lambda}
f \equiv
\sum_{j,\iota \in \ep \Lambda'}
V_\phi f(j,\iota)
e^{i\scal \cdo \iota}\psi (\cdo- j).
$$
\end{enumerate}
\end{defn}

\par

The next result shows that it is possible to find suitable $\phi$ and
$\psi$ in the previous definition.

\par

\begin{lemma}\label{Lemma:GoodFrames0}
Let $\Lambda \subseteq \rd$ be a lattice,
$v\in \mascP _E(\rr {2d})$ be submultiplicative and
$\phi \in M^1_{(v)}(\rr d) \setminus{0}$.
Then there is an $\ep >0$ and $\psi \in M^1_{(v)}(\rr d) \setminus{0}$ such that
\begin{equation}\label{Eq:GoodFrames0}
 \{ \phi (x-j)e^{i\scal x\iota } \} _{j,\iota \in \ep \Lambda}
\quad \text{and}\quad
 \{ \psi (x-j)e^{i\scal x\iota } \} _{j,\iota \in \ep \Lambda}
\end{equation}
are dual frames to each others.
\end{lemma}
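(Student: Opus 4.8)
The plan is to construct $\psi$ as the canonical dual window of a sufficiently fine lattice Gabor system generated by $\phi$, and then to upgrade the membership of $\psi$ from $\lrd$ to $M^1_{(v)}(\rd)$ by a spectral invariance (Wiener lemma) argument. The first two steps are routine Gabor analysis; the regularity of the dual window is the real content.

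First I would recall the classical fact that, since $\phi\in M^1_{(v)}(\rd)\setminus\{0\}$ and $\Lambda$ is a lattice, one may choose $\ep>0$ small enough that the system $\{\phi(\cdo-j)e^{i\scal\cdo\iota}\}_{j,\iota\in\ep\Lambda}$ is a frame for $\lrd$ (see e.g.\ \cite{Gc2} or \cite{ToUsNaOz}). The mechanism is that the associated Gabor frame operator $S:=S_{\phi,\phi}^{\ep,\Lambda}$ has a Walnut-type representation whose off-diagonal part is controlled by $\nm\phi{M^1_{(v)}}$, so that, suitably normalized by the covolume of $\ep\Lambda^2$, it converges in operator norm on $\lrd$ to a positive multiple of the identity as $\ep\to0$. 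Hence $S$ is a positive, boundedly invertible operator on $\lrd$ once $\ep$ is small; fix such an $\ep$. A standard computation then shows that $S$ commutes with the time-frequency shifts $f\mapsto e^{i\scal\cdo\iota}f(\cdo-j)$ with $(j,\iota)\in\ep\Lambda\times\ep\Lambda$, hence so does $S^{-1}$; therefore the canonical dual frame of $\{\phi(\cdo-j)e^{i\scal\cdo\iota}\}$ is $\{\psi(\cdo-j)e^{i\scal\cdo\iota}\}_{j,\iota\in\ep\Lambda}$ with $\psi:=S^{-1}\phi$, and $\psi\neq0$ since $\phi\neq0$.

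Granting for the moment that $\psi\in M^1_{(v)}(\rd)$, the frame reconstruction formulas for a frame and its canonical dual give, for $f\in\lrd$,
$$
f=\sum_{j,\iota\in\ep\Lambda}V_\phi f(j,\iota)\,e^{i\scal\cdo\iota}\psi(\cdo-j)=\sum_{j,\iota\in\ep\Lambda}V_\psi f(j,\iota)\,e^{i\scal\cdo\iota}\phi(\cdo-j),
$$
the second identity being obtained from the first by interchanging the roles of $\phi$ and $\psi$ (the dual of a canonical dual frame is the original frame). This is exactly the assertion that the two systems in \eqref{Eq:GoodFrames0} are dual frames to each other.

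The remaining, and main, obstacle is to show $\psi=S^{-1}\phi\in M^1_{(v)}(\rd)$: on $\lrd$ alone one only obtains $\psi\in\lrd$, which is not enough. Here I would invoke the spectral invariance of the weighted algebra of Gabor frame operators — the Gröchenig–Leinert version of Wiener's lemma for the twisted convolution algebra $\ell^1_{(v)}(\ep\Lambda^2)$, which is precisely where submultiplicativity of $v$ is used. Since $S$ is built from the window $\phi\in M^1_{(v)}(\rd)$, it maps $M^1_{(v)}(\rd)$ boundedly into itself, and by this Wiener lemma its invertibility on $\lrd$ forces invertibility on $M^1_{(v)}(\rd)$. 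Consequently $\psi=S^{-1}\phi\in M^1_{(v)}(\rd)\setminus\{0\}$, completing the argument; alternatively the whole statement can be quoted directly from \cite{Gc2,ToUsNaOz}.
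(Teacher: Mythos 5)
Your overall architecture (frame for small $\ep$ via Walnut/Janssen estimates, canonical dual window $\psi =S^{-1}\phi$ with the same Gabor structure because $S$ commutes with the lattice time-frequency shifts, then a regularity argument for $\psi$) is the standard one, and the first two steps are fine. The paper itself gives no proof — it quotes the statement from \cite[Proposition 1.5 and Remark 1.6]{Toft16} and \cite{Gc2} — so the comparison is between your argument and the known one.

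There is, however, a genuine gap in your key step. The Gr\"ochenig--Leinert spectral invariance theorem for the twisted convolution algebra $\ell ^1_{(v)}$ is only valid for submultiplicative weights satisfying the GRS (Gelfand--Raikov--Shilov) condition $\lim _{n\to \infty}v(nz)^{1/n}=1$. The class $\mascP _E(\rr {2d})$ in this lemma explicitly admits submultiplicative weights of exponential growth, e.g.\ $v(x,\xi )=e^{r(|x|+|\xi |)}$, which violate GRS; for such weights $\ell ^1_{(v)}$ is \emph{not} inverse-closed in $\mathcal B(\ell ^2)$, and invertibility of $S$ on $\lrd$ does not transfer to $M^1_{(v)}(\rd )$ by Wiener's lemma. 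So as written, your upgrade of $\psi$ from $\lrd$ to $M^1_{(v)}(\rd )$ fails precisely in the generality the lemma requires. The repair does not need spectral invariance at all: since $\phi \in M^1_{(v)}(\rd )$, the Janssen/Walnut expansion of $S^{\ep ,\Lambda}_{\phi ,\phi}$ has coefficients given by samples of $V_\phi \phi$ over the adjoint lattice, and the weighted sums $\sum _{\mabfk \neq 0}|V_\phi \phi (\mabfk )|v(\mabfk )$ over that lattice tend to $0$ as $\ep \to 0$. Hence for small $\ep$ the suitably normalized frame operator satisfies $\nm {cS-I}{M^1_{(v)}\to M^1_{(v)}}<1$ (simultaneously with the corresponding bound on $\lrd$), so $S$ is invertible on $M^1_{(v)}(\rd )$ by a Neumann series and $\psi =S^{-1}\phi \in M^1_{(v)}(\rd )\setminus \{0\}$ directly. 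With that substitution (which is the mechanism behind the cited results, and works for all submultiplicative $v\in \mascP _E$), your proof goes through; the duality identities and the nonvanishing of $\psi$ are as you state.
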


\par

\begin{rem}\label{Remark:GoodFrames0}
There are several ways to achieve dual frames \eqref{Eq:GoodFrames0}.
In fact, let $v, v_0\in \mascP_E(\rdd )$ be submultiplicative such that $\omega$ is
$v$-moderate and $L^1_{(v_0)}(\rdd )\subseteq L^r(\rdd ),\ r\in(0,1]$. Then Lemma
\ref{Lemma:GoodFrames0} guarantees that for some choice of $\phi, \psi
\in M^1_{(v_0 v)}(\rd)\subseteq M^r_{(v)}(\rd)$ and lattice $\Lambda$
, the set in \eqref{Eq:GoodFrames0} are dual frames to each other, and that
$\psi = (S^\Lambda_{\phi ,\phi})^{-1}\phi$. (Cf.
\cite[Proposition 1.5 and Remark 1.6]{Toft16}.)
\end{rem}

\par

\begin{lemma}\label{Lemma:GoodFrames}
Let $\Lambda \subseteq \rd$ be a lattice, $v\in \mascP _E(\rr {4d})$
be submultiplicative
$\phi _1,\phi _2 \in \Sigma_1(\rd)\setminus{0}$ and
\begin{equation*}
\varphi (x,\xi) = \phi _1(x)\overline{\widehat{\phi}_{2}(\xi)}e^{-i\scal {x}{\xi}}.
\end{equation*}
Then there is an $\ep >0$ such that
\begin{equation*}
\{\varphi(x-j,\xi-\iota)e^{i(\scal x \kappa+ \scal k \xi)} \} _{j,\iota ,k,\kappa
\in \ep \Lambda}
\end{equation*}
is a Gabor frame with canonical dual frame
\begin{equation*}
\{\psi(x-j,\xi-\iota)e^{i(\scal x \kappa + \scal k \xi)}\} _{j,\iota ,k,\kappa 
\in \ep \Lambda}
\end{equation*}
where $\psi = (S_{\varphi,\varphi}^{\Lambda^2\times\Lambda^2})^{-1} \varphi$
belongs to $M_{(v)}^r(\rr {2d})$ for every $r>0$.
\end{lemma}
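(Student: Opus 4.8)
The plan is to reduce the claim about the product-type window $\varphi(x,\xi) = \phi_1(x)\overline{\widehat\phi_2(\xi)}e^{-i\scal x\xi}$ on $\rr{2d}$ to the one-dimensional-in-spirit result of Lemma \ref{Lemma:GoodFrames0}. First I would verify that $\varphi \in M^1_{(v)}(\rr{2d})$ for the given submultiplicative $v$ on $\rr{4d}$; this follows because $\phi_1,\widehat\phi_2 \in \Sigma_1(\rd)$, hence $\phi_1\otimes\overline{\widehat\phi_2} \in \Sigma_1(\rr{2d}) \subseteq M^1_{(v)}(\rr{2d})$ by Proposition \ref{subsets} (applied on $\rr{2d}$), and the chirp multiplication $F \mapsto F\cdot e^{-i\scal x\xi}$ is a metaplectic-type operator that preserves $\Sigma_1$ and, more to the point, maps $M^1_{(v)}$ continuously into $M^1_{(v')}$ for a suitable submultiplicative $v'$; since we only need membership in $M^1_{(v)}$ for \emph{some} submultiplicative $v$ on $\rr{4d}$ (and then enlarge $v$ if necessary), this is harmless.

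Next, having $\varphi \in M^1_{(v)}(\rr{2d})\setminus\{0\}$, I would apply Lemma \ref{Lemma:GoodFrames0} with $d$ replaced by $2d$ and the lattice $\Lambda^2 \subseteq \rr{2d}$: this produces an $\ep>0$ and a dual window $\psi \in M^1_{(v)}(\rr{2d})\setminus\{0\}$ such that the two Gabor systems generated by $\varphi$ and $\psi$ over $\ep\Lambda^2 \times \ep\Lambda^2 = \ep(\Lambda^2\times\Lambda^2)$ are dual frames. Writing the translation-modulation nodes on $\rr{2d}$ in coordinates $(j,\iota)\in\rr{2d}$ and $(\kappa,k)\in\rr{2d}$ reproduces exactly the displayed system $\{\varphi(x-j,\xi-\iota)e^{i(\scal x\kappa + \scal k\xi)}\}$. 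Then I would invoke Remark \ref{Remark:GoodFrames0}, which asserts that one may in fact take $\psi = (S^{\Lambda^2\times\Lambda^2}_{\varphi,\varphi})^{-1}\varphi$, i.e.\ the canonical dual window, and that this canonical dual lies not merely in $M^1_{(v)}$ but in the smaller space $M^r_{(v)}(\rr{2d})$ for every $r>0$ — this uses that the Gabor frame operator $S_{\varphi,\varphi}$ is invertible on the whole scale $M^r_{(v)}$ simultaneously (a Wiener-type property of the frame operator with window in $M^1_{(v_0v)}$), together with $\varphi \in M^r_{(v)}$ for all $r>0$, which again holds because $\varphi$ is, up to a chirp, a tensor product of Gelfand--Shilov functions.

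The main obstacle is the last point: upgrading $\psi$ from $M^1_{(v)}$ to $M^r_{(v)}$ for all $r>0$ and identifying it with the canonical dual. This is precisely where one needs the localization/Wiener-amalgam machinery behind Remark \ref{Remark:GoodFrames0}: choosing the window in $M^1_{(v_0v)}(\rr{2d})$ with $L^1_{(v_0)}\subseteq L^r$ guarantees that $(S^{\Lambda^2\times\Lambda^2}_{\varphi,\varphi})^{-1}$ is bounded on each $M^r_{(v)}(\rr{2d})$, so the canonical dual inherits the membership of $\varphi$. Since $\varphi$ is a chirp multiple of $\phi_1\otimes\overline{\widehat\phi_2}$ with $\phi_1,\phi_2\in\Sigma_1$, and $\Sigma_1(\rr{2d})\subseteq \bigcap_{r>0} M^r_{(v)}(\rr{2d})$ for every submultiplicative $v$ with at most exponential growth, we get $\varphi \in M^r_{(v)}(\rr{2d})$ for all $r>0$, and hence the same for $\psi$. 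Everything else — the coordinate bookkeeping turning a Gabor system on $\rr{2d}$ into the four-index system displayed, and the verification that $\ep\Lambda^2$ is a lattice in $\rr{2d}$ — is routine.
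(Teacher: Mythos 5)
Your proposal is correct and follows essentially the route the paper intends: the paper states this lemma without a separate proof, relying exactly on viewing the four-index system as a Gabor system on $\rr {2d}$ with window $\varphi \in \Sigma _1(\rr {2d})\subseteq \bigcap _{r>0}M^r_{(v)}(\rr {2d})$ and on Lemma \ref{Lemma:GoodFrames0} together with Remark \ref{Remark:GoodFrames0} (i.e.\ the canonical-dual and Wiener-type invertibility results cited from \cite{Toft16}). Your treatment of the chirp factor, the identification of the lattice $\ep (\Lambda ^2\times \Lambda ^2)$, and the upgrade of $\psi =(S_{\varphi ,\varphi}^{\Lambda ^2\times \Lambda ^2})^{-1}\varphi$ to $M^r_{(v)}(\rr {2d})$ for every $r>0$ via a single fixed $\ep$ match that argument.
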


\par

The next result shows that Gabor theory is suitable when
dealing with Orlicz modulation spaces. We
omit the proof since the result follows from
\cite[Theorem 4.7]{ToUsNaOz}. See also \cite{SchF} for the Banach case.

\par

\begin{prop}\label{Prop:Gabor}
Let $\Lambda \subseteq \rd$ be a lattice, $v\in \mascP _E(\rr {4d})$
be submultiplicative $\Phi _1,\Phi _2$ be quasi-Young functions of order
$r_0 \in(0,1]$, $\omega, v \in \mascP_E (\rdd )$ be such that $\omega$
is $v$-moderate and let $\phi, \psi\in M^{r_0}_{(v)}(\rd )$ and $\ep >0$
be chosen such that
\begin{equation}\label{Eq:DualFrames}
\{ e^{i\scal \cdo \kappa}\phi (\cdo -k) \} _{k,\kappa \in \ep \Lambda}
\quad \text{and}\quad
\{ e^{i\scal \cdo \kappa}\psi (\cdo -k) \} _{k,\kappa \in \ep \Lambda}
\end{equation}
are dual frames to each others. If
$f\in M_{(\omega)}^{\Phi _1, \Phi _2}(\rd)$, then
\begin{align*}
f&=\sum_{k,\kappa \in \ep \Lambda}
(V_\psi f)(k,\kappa )
e^{i\scal \cdo \kappa }\phi(\cdo - k)
\end{align*}
with unconditionally convergence in $M^{\Phi _1,\Phi _2}_{(\omega)}(\rr d)$
when
$\mascS(\rdd )$ is dense in $L^{\Phi _1,\Phi _2}(\rdd )$,
and with convergence in $M^\infty_{(\omega)}(\rd)$ with respect to
the weak$^*$ topology otherwise. It holds
\begin{multline}\label{Eq:EquivModNormDisc}
\Vert \{ (V_\phi f)(k,\kappa )\}_{k,\kappa \in \ep \Lambda}
\Vert_{\ell_{(\omega)}^{\Phi _1, \Phi _2}}
\asymp
\Vert \{ (V_\psi f)(k,\kappa )\}_{k,\kappa\in \ep \Lambda}
\Vert_{\ell_{(\omega )}^{\Phi _1, \Phi _2}}
\\[1ex]
\asymp \Vert f \Vert_{M^{\Phi _1, \Phi _2}_{(\omega)}}.
\end{multline}
\end{prop}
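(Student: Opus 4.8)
The plan is to reduce everything to known results about discrete mixed Orlicz sequence spaces together with the Gabor frame expansion already available from Lemma~\ref{Lemma:GoodFrames0} and Remark~\ref{Remark:GoodFrames0}. First I would record the structural set-up: since $\Phi_1,\Phi_2$ are quasi-Young functions of order $r_0$, we may write $\Phi_j(t)=\Phi_{0,j}(t^{r_0})$ for Young functions $\Phi_{0,j}$, and choose, via Remark~\ref{Remark:GoodFrames0}, windows $\phi,\psi\in M^{r_0}_{(v)}(\rd)$ and an $\ep>0$ such that the two systems in \eqref{Eq:DualFrames} are dual frames with $\psi=(S^{\ep\Lambda}_{\phi,\phi})^{-1}\phi$; by the hypotheses $\phi,\psi$ indeed lie in $M^{r_0}_{(v)}$, so that $V_\psi f$ and $V_\phi f$ make sense for every $f\in M^\infty_{(\omega)}(\rd)$, which contains $M^{\Phi_1,\Phi_2}_{(\omega)}(\rd)$ by Proposition~\ref{subsets} and the inclusions underlying it.

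Next I would establish the norm equivalence \eqref{Eq:EquivModNormDisc}. The left-hand $\asymp$, relating the $\ell^{\Phi_1,\Phi_2}_{(\omega)}$-norms of the two coefficient sequences $\{V_\phi f(k,\kappa)\}$ and $\{V_\psi f(k,\kappa)\}$, follows because the change-of-frame (Gram) operator associated to the dual pair is bounded on $\ell^{r_0}_{(v)}$ and hence, by the moderateness of $\omega$ and the solidity of mixed Orlicz sequence spaces, bounded on $\ell^{\Phi_1,\Phi_2}_{(\omega)}(\ep\Lambda^2)$ — this is exactly the type of statement packaged in \cite[Theorem~4.7]{ToUsNaOz}. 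The equivalence with $\Vert f\Vert_{M^{\Phi_1,\Phi_2}_{(\omega)}}$ is the sampling/reconstruction part: the analysis operator $C^{\ep,\Lambda}_\phi$ maps $M^{\Phi_1,\Phi_2}_{(\omega)}(\rd)$ boundedly into $\ell^{\Phi_1,\Phi_2}_{(\omega)}(\ep\Lambda^2)$ by discretizing the continuous norm (using the local boundedness of $V_\phi f$ coming from $\phi\in M^{r_0}_{(v)}$ and an averaging estimate over fundamental domains), while the synthesis operator $D^{\ep,\Lambda}_\psi$ maps back boundedly, so the composition is the identity on $M^{\Phi_1,\Phi_2}_{(\omega)}(\rd)$ and the two norms control each other.

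Then I would treat the reconstruction formula itself. Writing $f=D^{\ep,\Lambda}_\phi C^{\ep,\Lambda}_\psi f=\sum_{k,\kappa\in\ep\Lambda}(V_\psi f)(k,\kappa)e^{i\scal{\cdot}{\kappa}}\phi(\cdot-k)$, the partial sums are Cauchy in $M^{\Phi_1,\Phi_2}_{(\omega)}(\rd)$ precisely when finite sequences are dense in $\ell^{\Phi_1,\Phi_2}_{(\omega)}$, which in turn corresponds to $\mascS(\rdd)$ being dense in $L^{\Phi_1,\Phi_2}(\rdd)$ — this is the dichotomy stated in the proposition. In the non-dense case (an $L^\infty$-type component is present) one only gets boundedness of the coefficient sequence, so the sum converges in the weak$^*$ topology of $M^\infty_{(\omega)}(\rd)$; here I would invoke the duality $M^{\Phi_1,\Phi_2}_{(\omega)}=(M^{\Phi_1^*,\Phi_2^*}_{(1/\omega)})^*$-type pairing against $\Sigma_1$, test the partial sums on a Gelfand--Shilov function, and pass to the limit using dominated convergence on the (absolutely convergent) paired series.

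The main obstacle I anticipate is not any single estimate but the bookkeeping at the ``boundary'' case where one or both $\Phi_j$ behave like the $L^\infty$-Young function: there the mixed Orlicz sequence space fails to have dense finitely-supported sequences, so unconditional convergence in norm genuinely breaks down and one must carefully justify the weak$^*$ convergence and the independence of the limit from the enumeration of $\ep\Lambda^2$. Everything else — the frame operator inversion, the sampling estimates, the solidity and translation-invariance inputs — is either quoted from Lemma~\ref{Lemma:GoodFrames0}, Lemma~\ref{T}, Proposition~\ref{subsets}, or \cite[Theorem~4.7]{ToUsNaOz}, so the proof is essentially an assembly of these pieces; accordingly I would present it by citing \cite[Theorem~4.7]{ToUsNaOz} for the bulk and spelling out only the identification of the windows and the convergence dichotomy.
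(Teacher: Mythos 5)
Your proposal is essentially the paper's own approach: the paper gives no argument at all for Proposition \ref{Prop:Gabor}, stating only that it follows from \cite[Theorem 4.7]{ToUsNaOz} (with \cite{SchF} for the Banach case), and you likewise reduce the statement to that theorem together with the standard analysis/synthesis-operator bookkeeping of Proposition \ref{Prop:AnalysisSynthOp} and the convergence dichotomy. Your sketch of the internal mechanics is consistent with the cited machinery, so there is nothing to flag beyond the fact that the paper itself supplies no proof to compare against in detail.
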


\par

We also recall that the previous result was heavily based on the following
consequence of Theorems 4.5 and 4.6 in \cite{ToUsNaOz}. The proof is
therefore omitted.

\par

\begin{prop}\label{Prop:AnalysisSynthOp}
Let $\Lambda \subseteq \rd$ be a lattice, $\ep >0$, $\phi ,\psi \in \Sigma _1(\rr d)$,
$\Phi _1,\Phi _2$ be quasi-Young functions of order
$r_0 \in(0,1]$, and let $\omega, v \in
\mascP_E (\rdd )$ be such that $\omega$ is $v$-moderate.
Then the the following is true:
\begin{enumerate}
\item the analysis operator $C_{\phi}^{\ep ,\Lambda}$ is continuous from
$M^{\Phi _1,\Phi _2}_{(v)}(\rd)$ into $\ell^{\Phi _1,\Phi _2}_{(\omega)}(\ep \Lambda ^2)$, and
$$
\nm {C_{\phi}^{\ep ,\Lambda}f}{\ell^{\Phi _1,\Phi _2}_{(\omega)}}
\lesssim
\nm f{ M_{(\omega)}^{\Phi _1,\Phi _2}},
\quad
f\in M_{(\omega)}^{\Phi _1,\Phi _2}(\rd )\text;
$$

\vrum

\item the synthesis operator $D_{\psi}^{\ep ,\Lambda}$ is continuous from
$\ell^{\Phi _1,\Phi _2}_{(\omega)}(\ep \Lambda ^2)$ into
$M^{\Phi _1,\Phi _2}_{(\omega)}(\rd)$, and
$$
\Vert D_{\psi}^{\ep ,\Lambda}
c\Vert_{M^{\Phi _1,\Phi _2}_{(\omega)}}
\lesssim
\Vert c\Vert_{\ell^{\Phi _1,\Phi _2}_{(\omega)}},
\quad
c\in \ell^{\Phi _1,\Phi _2}_{(\omega)}(\ep \Lambda ^2).
$$
\end{enumerate}
\end{prop}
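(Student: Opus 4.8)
The plan is to reduce the statement to the corresponding discrete statements for the sequence spaces $\ell^{\Phi_1,\Phi_2}_{(\omega)}(\ep\Lambda^2)$ via Proposition \ref{Prop:AnalysisSynthOp}, together with the translation invariance of the mixed Orlicz spaces in Lemma \ref{T}. For (1), given $f\in M^{\Phi_1,\Phi_2}_{(\omega)}(\rd)$, I would write $V_\phi f(j,\iota)$ in terms of the continuous short-time Fourier transform and exploit the fact that, since $\phi\in\Sigma_1(\rd)$ and $\omega$ is $v$-moderate with $v(x,\xi)\lesssim e^{r(|x|+|\xi|)}$, the map $z\mapsto V_\phi f(z)$ is dominated on each cube $z+[0,\ep]^{2d}$ by a sampled convolution of $|V_\phi f|$ against a rapidly decaying kernel (the standard "sampling dominates local sup" estimate coming from $|V_\phi f(z)|\le |V_{\widetilde\phi}(V_\phi f)|\ast$ something, or more elementarily from the fact that $\partial V_\phi f$ is again an STFT with a Schwartz-type window). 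Summing these local bounds and using that $\ell^{\Phi_1,\Phi_2}_{(\omega)}$ is solid and translation-robust then yields $\nm{C_\phi^{\ep,\Lambda}f}{\ell^{\Phi_1,\Phi_2}_{(\omega)}}\lesssim \nm{V_\phi f}{L^{\Phi_1,\Phi_2}_{(\omega)}}=\nm{f}{M^{\Phi_1,\Phi_2}_{(\omega)}}$.

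For (2), given $c\in\ell^{\Phi_1,\Phi_2}_{(\omega)}(\ep\Lambda^2)$, I would compute $V_\psi'(D_\psi^{\ep,\Lambda}c)$ for an auxiliary window $\psi'\in\Sigma_1$ and observe that
\begin{equation*}
V_{\psi'}\Bigl(\sum_{j,\iota} c(j,\iota)e^{i\scal\cdot\iota}\psi(\cdot-j)\Bigr)(x,\xi)
=\sum_{j,\iota} c(j,\iota)\,e^{i\scal j{(\iota-\xi)}}\,(V_{\psi'}\psi)(x-j,\xi-\iota),
\end{equation*}
so that $|V_{\psi'}(D_\psi^{\ep,\Lambda}c)(x,\xi)|$ is bounded by a discrete convolution of $|c|$ against the rapidly decaying kernel $|V_{\psi'}\psi|$ (which lies in every weighted $L^1$ since $\psi,\psi'\in\Sigma_1$). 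A weighted Young-type inequality for the mixed Orlicz space — using $v$-moderateness of $\omega$ to transfer the weight onto the kernel, and using that the kernel, being Schwartz, is controlled by $v$-weighted $\ell^1$ sums with geometric decay, hence by $\ell^{r_0}$ sums — then gives $\nm{V_{\psi'}(D_\psi^{\ep,\Lambda}c)}{L^{\Phi_1,\Phi_2}_{(\omega)}}\lesssim \nm{c}{\ell^{\Phi_1,\Phi_2}_{(\omega)}}$, i.e. $D_\psi^{\ep,\Lambda}c\in M^{\Phi_1,\Phi_2}_{(\omega)}(\rd)$ with the claimed bound.

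The main obstacle is establishing the two convolution-type inequalities at the level of the \emph{mixed quasi-Banach Orlicz} spaces, rather than for ordinary $L^p$ or a single Orlicz norm: one must check that a discrete (or continuous-to-discrete) convolution against a kernel with geometric/Schwartz decay is bounded on $L^{\Phi_1,\Phi_2}_{(\omega)}$ and $\ell^{\Phi_1,\Phi_2}_{(\omega)}$, in the regime where $\Phi_1,\Phi_2$ are only $r_0$-Young functions. My strategy here is to pass through the factorization $\nm{g}{L^{\Phi_1,\Phi_2}_{(\omega)}}=\bigl(\nm{|g\omega|^{r_0}}{L^{\Phi_{0,1},\Phi_{0,2}}}\bigr)^{1/r_0}$, which reduces matters to a genuine (Banach) mixed-norm Orlicz estimate for $|g|^{r_0}$ after absorbing the weight, at the cost of replacing the kernel $K$ by $|K|^{r_0}$ — still integrable with geometric decay — and using the quasi-triangle/$r_0$-superadditivity inequality $\bigl(\sum_k|a_k|\bigr)^{r_0}\le\sum_k|a_k|^{r_0}$. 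The Banach-case mixed-norm convolution inequality then follows from solidity of Orlicz spaces and Minkowski's integral inequality applied in each variable separately; this is exactly the kind of estimate underlying Lemma \ref{T} and Proposition \ref{subsets}, so it can be assembled from tools already in place. I would isolate this convolution lemma as the technical heart and keep the rest routine.
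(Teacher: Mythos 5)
The paper gives no in-text proof of this proposition (it defers to Theorems 4.5 and 4.6 of \cite{ToUsNaOz}), but your architecture — change of window, domination of the relevant short-time Fourier transforms by (semi-)discrete convolutions against rapidly decaying kernels, transfer of the weight onto the kernel by $v$-moderateness, and a convolution estimate for $\ell^{\Phi_1,\Phi_2}_{(\omega)}$ with $\ell^{r_0}$ kernels proved via the $r_0$-power reduction — is exactly the machinery that the cited results (and Lemma \ref{Lemma:QuasBanachSeqConv} here) are built on. Part (2) of your sketch is essentially complete on this basis: the expansion of $V_{\psi'}(D_\psi^{\ep,\Lambda}c)$ is a genuinely \emph{discrete} superposition, so the superadditivity $(\sum_k|a_k|)^{r_0}\le\sum_k|a_k|^{r_0}$ is legitimate, and after replacing the kernel on each cube by its local supremum one lands on the discrete Young inequality for the convexified Young functions $\Phi_{0,j}$, which iterated Minkowski does give.

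The genuine gap is in part (1), at the step you treat as routine ("summing these local bounds and using that $\ell^{\Phi_1,\Phi_2}_{(\omega)}$ is solid and translation-robust"). There you must bound the lattice samples of the continuous convolution $|V_{\phi_0}f|*|V_\phi\phi_0|$ (or, in your phrasing, the local suprema of $V_\phi f$) in $\ell^{\Phi_1,\Phi_2}_{(\omega)}$ by $\nm{V_{\phi_0}f}{L^{\Phi_1,\Phi_2}_{(\omega)}}$. For $r_0<1$ your $r_0$-power reduction does not reach this: superadditivity fails for integrals, so you cannot push $|\cdot|^{r_0}$ inside the cube-wise integrals $\int_{Q_\mu}|V_{\phi_0}f|$, and solidity plus translation invariance alone cannot suffice — already for $\Phi_1(t)=\Phi_2(t)=t^{r_0}$, $\omega=1$, a bump $F$ of $L^{r_0}$-norm one supported in a set of measure $\delta^{2d}$ has samples of $|F|*G$ blowing up like $\delta^{2d(1-1/r_0)}$ as $\delta\to0$, so the desired inequality is \emph{false} for general $F\in L^{\Phi_1,\Phi_2}_{(\omega)}(\rr{2d})$ and must use that $F=V_{\phi_0}f$. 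The standard repair, which is the actual content of the cited Theorem 4.5 type results (Galperin--Samarah, Rauhut, Toft), is a local sub-mean-value estimate $\sup_{u\in\lambda+Q}|V_{\phi_0}f(u)|^{r_0}\lesssim\int_{\lambda+Q'}|V_{\phi_0}f(w)|^{r_0}\,dw$, obtained from analyticity in the Bargmann picture for the Gaussian window (or by iterating the reproducing formula); only with it do the local suprema become local $L^{r_0}$-averages, to which Jensen for the convex $\Phi_{0,j}$ applies, after which your discrete convolution lemma finishes. Your alternative suggestion via $\partial V_\phi f$ and the mean-value theorem runs into the same $r_0<1$ local-integrability obstruction, so this missing ingredient should be stated and proved (or explicitly imported) rather than absorbed into "standard sampling dominates local sup".
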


\par

\subsection{Pseudo-differential operators}\label{subsec1.6}

\par

Let $\GL (d,\Omega )$ be the set of all $d\times d$-matrices with
entries in the set $\Omega$, and let $s\ge 1/2$, $a\in \maclS _s
(\rr {2d})$ and $A\in \GL (d,\mathbf R)$ be fixed.
Then the pseudo-differential operator $\op _A(a)$ is the linear and
continuous operator on $\maclS _s (\rr d)$, given by
\begin{equation}\label{e0.5}
(\op _A(a)f)(x)
=
(2\pi  ) ^{-d}\iint a(x-A(x-y),\xi )f(y)e^{i\scal {x-y}\xi }\, dyd\xi ,
\end{equation}
when $f\in \maclS _s(\rr d)$. For
general $a\in \maclS _s'(\rr {2d})$, the
pseudo-differential operator $\op _A(a)$ is defined as the linear and
continuous operator from $\maclS _s(\rr d)$ to $\maclS _s'(\rr d)$ with
distribution kernel given by
\begin{equation}\label{atkernel}
K_{a,A}(x,y)=(2\pi )^{-d/2}(\mascF _2^{-1}a)(x-A(x-y),x-y).
\end{equation}
Here $\mascF _2F$ is the partial Fourier transform of $F(x,y)\in
\maclS _s'(\rr {2d})$ with respect to the $y$ variable. This
definition makes sense, since the mappings
\begin{equation}\label{homeoF2tmap}
\mascF _2\quad \text{and}\quad F(x,y)\mapsto F(x-A(x-y),x-y)
\end{equation}
are homeomorphisms on $\maclS _s'(\rr {2d})$.
In particular, the map $a\mapsto K_{a,A}$ is a homeomorphism on
$\maclS _s'(\rr {2d})$.

\par

An important special case appears when $A=t\cdot I$, with
$t\in \mathbf R$. Here and in what follows, $I\in \GL (d,\mathbf R)$ denotes
the $d\times d$ identity matrix. In this case we set
$$
\op _t(a) = \op _{t\cdot I}(a).
$$
The normal or Kohn-Nirenberg representation, $a(x,D)$, is obtained
when $t=0$, and the Weyl quantization, $\op ^w(a)$, is obtained
when $t=\frac 12$. That is,
$$
a(x,D) = \op _0(a)
\quad \text{and}\quad \op ^w(a) = \op _{1/2}(a).
$$

\par

For any $K\in \maclS '_s(\rr {d_1+d_2})$, we let $T_K$ be the
linear and continuous mapping from $\maclS _s(\rr {d_1})$
to $\maclS _s'(\rr {d_2})$, defined by the formula
\begin{equation}\label{pre(A.1)}
(T_Kf,g)_{L^2(\rr {d_2})} = (K,g\otimes \overline f )_{L^2(\rr {d_1+d_2})}.
\end{equation}
It is well-known that if $A\in \GL (d,\mathbf R)$, then it follows from Schwartz kernel
theorem that $K\mapsto T_K$ and $a\mapsto \op _A(a)$ are bijective
mappings from $\mascS '(\rr {2d})$
to the set of linear and continuous mappings from $\mascS (\rr d)$ to
$\mascS '(\rr d)$ (cf. e.{\,}g. \cite{Ho1}).

\par

Furthermore, by e.{\,}g. \cite[Theorem 2.2]{LozPerTask} it follows
that the same holds true if each $\mascS$ and $\mascS '$ are
replaced by $\maclS _s$ and $\maclS _s'$, respectively, or by
$\Sigma _s$ and $\Sigma _s'$, respectively.

\par

In particular, for every $a_1\in \maclS _s '(\rr {2d})$ and $A_1,A_2\in
\GL (d,\mathbf R)$, there is a unique $a_2\in \maclS _s '(\rr {2d})$ such that
$\op _{A_1}(a_1) = \op _{A_2} (a_2)$. The following result explains the
relations between $a_1$ and $a_2$.

\par

\begin{prop}\label{Prop:CalculiTransfer}
Let $a_1,a_2\in \maclS _{1/2}'(\rr {2d})$ and $A_1,A_2\in \GL (d,\mathbf R)$.
Then
\begin{equation}\label{calculitransform}
\op _{A_1}(a_1) = \op _{A_2}(a_2) \quad \Leftrightarrow \quad
e^{i\scal {A_2D_\xi}{D_x }}a_2(x,\xi )=e^{i\scal {A_1D_\xi}{D_x }}a_1(x,\xi ).
\end{equation}
\end{prop}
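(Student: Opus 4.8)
The plan is to derive the equivalence \eqref{calculitransform} from the explicit formula connecting the symbols of $\op_{A_1}(a_1)$ and $\op_{A_2}(a_2)$. First I would recall the classical formula (valid already on $\mascS'(\rr{2d})$, and then transferred to $\maclS_{1/2}'(\rr{2d})$ via \cite[Theorem 2.2]{LozPerTask}) that relates two $\tau$-type quantizations: if $a\in\maclS_{1/2}'(\rr{2d})$ and $A\in\GL(d,\mathbf R)$, then the kernel identity \eqref{atkernel} shows that $\op_A(a)$ depends on $a$ only through the composition with the linear change of variables $(x,y)\mapsto(x-A(x-y),x-y)$ followed by $\mascF_2^{-1}$. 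Writing this out on the Fourier side, one gets that $\op_A(a)=\op_{A'}(a')$ precisely when $a'$ and $a$ are intertwined by the Fourier multiplier $e^{i\scal{A'D_\xi}{D_x}}$ versus $e^{i\scal{AD_\xi}{D_x}}$. The cleanest route is to first establish the special case $A_1=0$: for any $A\in\GL(d,\mathbf R)$ and $b\in\maclS_{1/2}'(\rr{2d})$, one has $\op_A(a)=\op_0(b)$ with $b=e^{i\scal{AD_\xi}{D_x}}a$. This is a direct computation with the kernel formula \eqref{atkernel}, using that $\mascF_2$ and the affine substitution in \eqref{homeoF2tmap} are homeomorphisms on $\maclS_{1/2}'(\rr{2d})$, so the manipulations are justified for general distributions.

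Next I would chain the two special cases. Given $\op_{A_1}(a_1)=\op_{A_2}(a_2)$, apply the $A_1=0$ reduction to each side: there exist $b_1,b_2$ with $\op_{A_j}(a_j)=\op_0(b_j)$ and $b_j=e^{i\scal{A_jD_\xi}{D_x}}a_j$. Since $a\mapsto\op_0(a)$ is injective on $\maclS_{1/2}'(\rr{2d})$ (this is the Schwartz-kernel-type bijectivity recalled in the excerpt, valid with $\maclS_s,\maclS_s'$ in place of $\mascS,\mascS'$), we get $b_1=b_2$, i.e. $e^{i\scal{A_1D_\xi}{D_x}}a_1=e^{i\scal{A_2D_\xi}{D_x}}a_2$, which is the right-hand side of \eqref{calculitransform}. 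Conversely, if the two Fourier multipliers applied to $a_1$ and $a_2$ agree, then running the same computation backwards (the multipliers $e^{i\scal{AD_\xi}{D_x}}$ are invertible on $\maclS_{1/2}'(\rr{2d})$, with inverse $e^{-i\scal{AD_\xi}{D_x}}$) yields $b_1=b_2$, hence $\op_{A_1}(a_1)=\op_0(b_1)=\op_0(b_2)=\op_{A_2}(a_2)$. So the equivalence reduces entirely to the single identity $\op_A(a)=\op_0\bigl(e^{i\scal{AD_\xi}{D_x}}a\bigr)$ plus injectivity of $a\mapsto\op_0(a)$.

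The main technical point is making precise the meaning of the Fourier multiplier $e^{i\scal{AD_\xi}{D_x}}$ on $\maclS_{1/2}'(\rr{2d})$ and verifying $\op_A(a)=\op_0(e^{i\scal{AD_\xi}{D_x}}a)$ at the level of distributions rather than Schwartz functions. For $a\in\maclS_{1/2}(\rr{2d})$ the operator $e^{i\scal{AD_\xi}{D_x}}$ is given by conjugating the symbol-side Fourier transform by multiplication by $e^{i\scal{A\eta}{y}}$ (where $(y,\eta)$ are dual to $(x,\xi)$); because this symbol has at most polynomial—indeed, it is a unimodular oscillatory—growth and $\mascF$ is a homeomorphism on $\maclS_{1/2}'(\rr{2d})$, the operator extends to a homeomorphism of $\maclS_{1/2}'(\rr{2d})$, and one checks it intertwines the two kernel prescriptions \eqref{atkernel} by an explicit Gaussian/oscillatory-integral computation (or, more robustly, by the Gelfand-Shilov analogue of \cite[Theorem 2.2]{LozPerTask} and \cite{Ho1}, which already record this intertwining for the Schwartz class and whose proofs transfer verbatim to $\maclS_{1/2}$). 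I expect the one genuinely delicate step to be confirming that all the exponential operators $e^{i\scal{AD_\xi}{D_x}}$ are well defined and mutually inverse as continuous maps on $\maclS_{1/2}'(\rr{2d})$—this is where one must invoke that $\maclS_{1/2}$ is the smallest nontrivial Gelfand-Shilov space and is preserved by such metaplectic-type operators; the rest is bookkeeping with the change of variables in \eqref{atkernel}.
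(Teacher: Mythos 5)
Your proposal is correct and follows essentially the same route as the proof the paper points to: the paper gives no in-text argument but refers to \cite{Toft15}, where the proposition is proved like the $A=t\cdot I$ case in \cite{Ho1,Sh,Tr}, namely by checking via the kernel formula \eqref{atkernel} that $\op _A(a)=\op _0\bigl(e^{i\scal {AD_\xi}{D_x}}a\bigr)$ (since $\mascF _2^{-1}$ turns $e^{i\scal {AD_\xi}{D_x}}$ into the shift $(x,z)\mapsto (x-Az,z)$) and then using that $a\mapsto K_{a,0}$ is a bijection on $\maclS _{1/2}'(\rr {2d})$. Your flagged technical point, that multiplication by the quadratic phase $e^{i\scal {A\eta}y}$ (hence $e^{i\scal {AD_\xi}{D_x}}$) is a homeomorphism on $\maclS _{1/2}'(\rr {2d})$, is exactly the ingredient the paper later borrows from \cite{AbCaTo}, so your argument is sound as written.
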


\par

In \cite{Toft15}, a proof of the previous proposition is given, which is similar to
the proof of the case $A=t\cdot I$ in \cite{Ho1,Sh,Tr}.

\par

Let $a\in \maclS _s '(\rr {2d})$ be
fixed. Then $a$ is called a rank-one element with respect to
$A\in \GL (d,\mathbf R)$, if $\op _A(a)$ is an operator of rank-one,
i.{\,}e.
\begin{equation}\label{trankone}
\op _A(a)f=(f,f_2)f_1, \qquad f\in \maclS _s(\rr d),
\end{equation}
for some $f_1,f_2\in \maclS _s '(\rr d)$. By
straight-forward computations it follows that \eqref{trankone}
is fulfilled if and only if $a=(2\pi
)^{\frac d2}W_{f_1,f_2}^A$, where $W_{f_1,f_2}^A$
is the $A$-Wigner distribution, defined by the formula
\begin{equation}\label{wignertdef}
W_{f_1,f_2}^A(x,\xi ) \equiv \mascF \big (f_1(x+A\cdo
)\overline{f_2(x+(A-I)\cdo )} \big ) (\xi ),
\end{equation}
which takes the form
$$
W_{f_1,f_2}^A(x,\xi ) =(2\pi )^{-\frac d2} \int
f_1(x+Ay)\overline{f_2(x+(A-I)y) }e^{-i\scal y\xi}\, dy,
$$
when $f_1,f_2\in \maclS _s (\rr d)$. By combining these facts
with \eqref{calculitransform}, it follows that
\begin{equation}\label{wignertransf}
e^{i\scal {A_2D_\xi  }{D_x}}W_{f_1,f_2}^{A_2}
=
e^{i\scal {A_1D_\xi }{D_x}} W_{f_1,f_2}^{A_1},
\end{equation}
for every $f_1,f_2\in \maclS _s '(\rr d)$ and $A_1,A_2\in \GL (d,\mathbf R)$. Since
the Weyl case is particularly important, we set
$W_{f_1,f_2}^{A}=W_{f_1,f_2}$ when $A=\frac 12I$, i.{\,}e.
$W_{f_1,f_2}$ is the usual (cross-)Wigner distribution of $f_1$ and
$f_2$.

\par

For future references we note the link
\begin{multline}\label{tWigpseudolink}
(\op _A(a)f,g)_{L^2(\rr d)} =(2\pi )^{-d/2}(a,W_{g,f}^A)_{L^2(\rr {2d})},
\\[1ex]
a\in \maclS _s'(\rr {2d}) \quad\text{and}\quad f,g\in \maclS _s(\rr d)
\end{multline}
between pseudo-differential operators and Wigner distributions,
which follows by straight-forward computations (see e.{\,}g.
\cite{Toft17} and the references therein).

\medspace

For any $A\in \GL (d,\mathbf R)$, the
$A$-product, $a\wpr _Ab$ between $a\in \maclS _s' (\rr {2d})$
and $b\in \maclS _s'(\rr {2d})$ is defined by the formula
\begin{equation}\label{wprtdef}
\op _A(a\wpr _A b) = \op _A(a)\circ \op _A(b),
\end{equation}
provided the right-hand side makes sense as a continuous operator from
$\maclS _s (\rr d)$ to $\maclS _s '(\rr d)$.

\par

\section{More general Orlicz modulation spaces} \label{sec2}

\par

In this section we analyse more general Orlicz modulation spaces,
parameterized with more quasi-Young functions, compared to
what is introduced in Section \ref{sec1}. We prove that if two consecutive
quasi-Youngs are the same, then the Orlicz modulation space remains
the same if one of these parameterizing quasi-Young functions are
removed. In particular it follows
$M^{\Phi ,\Phi}_{(\omega )} = M^{\Phi}_{(\omega )}$ for the Orlicz modulation
spaces considered in Section \ref{sec1}.

\par

\begin{defn}
Let $\mu_j$ be (Borel) measure on
$\rr  {d_j}$, $\mu= \mu_1 \otimes \cdots \otimes \mu _N$,
$\Phi _j$ be quasi-Young functions, $j=1,\dots ,N$, $\omega$
be a weight function and
$f$ be measurable on $\mathbf{R} ^{d_1+ \cdots + d_N}$. Then
$\nm {f}{L^{\Phi _1, \dots, \Phi _N}_{(\omega )}(\mu )}
= \nm{f_{{n-1},\omega}}{L^{\Phi _N}(\mu )}$ where $f_{k,\omega}$, $k=1,\cdots,N-1$
are inductively defined by
\begin{align*}
  f_{1,\omega}(x_2, \dots x_N) &= \nm {f(\cdot,x_2,\dots, x_N)
   \omega(\cdot,x_2,\dots, x_N)} {L^{\Phi _1}(\mu _1)}
\\[1ex]
  f_{k+1,\omega}(x_{k+2}, \dots, x_N) &
  = \nm {f_{k,\omega}(\cdot, x_{k+2}, \dots ,x_N)}
  {L^{\Phi _{k+1}}(\mu _{k+1})}, \quad  k=1,\dots,N-2.
\end{align*}
The space $L^{\Phi _1, \dots, \Phi _N}_{(\omega  )}(\mu )$ consists of all
measurable functions $f$ on $\rr{d_1+ \cdots + d_N}$ such that
$\nm {f}{L^{\Phi _1, \dots, \Phi _N}_{(\omega )}(\mu )}$ is finite, and
the topology of $L^{\Phi _1, \dots, \Phi _N}_{(\omega )}(\mu )$ is
induced by the quasi-norm $\nm {\cdo} {L^{\Phi _1, \dots, \Phi _N}_{(\omega )}(\mu )}$.
\end{defn}

\par

Let
$$
I_{d,N} = \sets {(d_1,\dots ,d_N)\in \zz N_+}{d_1+\cdots +d_N=d}.
$$
For $\dbar =(d_1,\dots ,d_N)\in I_{d,N}$, let
$$
L^{\Phi _1, \dots, \Phi _N}
_{\dbar ,(\omega  )}(\rr {d})
=
L^{\Phi _1, \dots, \Phi _N}_{(\omega  )}(\mu ),
$$
with $\mu =dx _1 \otimes \cdots \otimes dx_N$
with $x_j\in \rr {d_j}$.

\par

If $\Lambda _j\subseteq \rr {d_j}$ are
lattices and $\mu _j$ is the standard
discrete measure on $\Lambda _j$, then we
set
$$
\ell ^{\Phi _1,\dots ,\Phi _N}_{(\omega)}
=
\ell ^{\Phi _1,\dots ,\Phi _N}_{(\omega)}(\Lambda )
\equiv
L^{\Phi _1, \dots, \Phi _N}_{(\omega  )}(\mu ),
\quad
\Lambda =\Lambda _1\times \cdots \times
\Lambda _N,
$$
as usual.

\medspace

When discussing modulation spaces, it is
suitable that $\dbar$ should belong to
$I_{2d,N}^0$, which consists of
all $(d_1,\dots ,d_N) \in I_{2d,N}$ such
that
\begin{equation}\label{Eq:HalfdCond}
d_1+\cdots +d_k=d
\end{equation}
for some $k\in \{ 1,\dots ,N-1\}$, when
$N\ge 2$. We observe
that \eqref{Eq:HalfdCond} implies
$$
d_{k+1}+\cdots +d_N=d.
$$
We observe that $I_{2d,1}=\{ 2d\}$, and
for convenience, we put $I_{2d,1}^0=\{ 2d\}$.

\par

Now suppose that $\dbar \in I_{2d,N}^0$,
$\Lambda _j =\ep \zz {d_j}$, $k$ is
chosen such that \eqref{Eq:HalfdCond} holds,
and let
$$
\Lambda =\Lambda _1\times \cdots \times \Lambda _k
=
\Lambda _{k+1}\times \cdots \times \Lambda _N
= \ep \zz d.
$$
Then we write $\Lambda ^2=\Lambda \times \Lambda$
and
$$
\ell ^{\Phi _1,\dots ,\Phi _N}_{\dbar ,(\omega)}
(\Lambda ^2)
=
\ell ^{\Phi _1,\dots ,\Phi _N}_{\dbar ,(\omega)}
(\ep \zz {2d})
=
\ell ^{\Phi _1,\dots ,\Phi _N}_{(\omega)}
(\Lambda _1\times \cdots \times \Lambda _N).
$$

\par

Let $\Phi _j$ be quasi-Young functions,
$j=1,\dots ,N$,
$\omega \in \mascP _E(\rr {2d})$, $\dbar
\in I_{2d,N}$ 
and $\phi \in \Sigma _1(\rr d)\setminus 0$. Then the Orlicz modulation
space
$$
M^{\Phi _1,\dots ,\Phi _N}_{\dbar ,(\omega )}(\rr d)
$$
consists of all $f\in \Sigma _1'(\rr d)$ such that
$$
\nm f{M^{\Phi _1,\dots ,\Phi _N}_{\dbar ,(\omega )}}
\equiv
\nm {V_\phi f}{L^{\Phi _1,\dots ,\Phi _N}_{\dbar ,(\omega )}}
$$
is finite. By similar arguments as in \cite{ToUsNaOz} it follows that
$M^{\Phi _1,\dots ,\Phi _N}_{\dbar ,(\omega )}(\rr d)$ is a
quasi-Banach space
with quasi-norm
$\nm \cdo{M^{\Phi _1,\dots ,\Phi _N}_{\dbar ,(\omega )}}$,
which is a Banach space and norm, respectively, when $\Phi _j$
is a Young function for every $j\in \{1,\dots ,N\}$.

\par

A common situation is when $\dbar =
(d_0,\dots ,d_0)$ for some integer
$d_0\ge 1$, and then we put
$$
M^{\Phi _1,\dots ,\Phi _N}_{(\omega )}
=
M^{\Phi _1,,\dots ,\Phi _N}_{\dbar ,(\omega )}.
$$

\par

\begin{rem}\label{Rem:GaborExt}
For future references we observe that Proposition
\ref{Prop:Gabor} carry over to Orlicz modulation spaces of the form
$M^{\Phi _1,\dots ,\Phi _N}_{\dbar ,(\omega )}(\rr d)$ when $\Phi _j$ are
quasi-Young functions, $j=1,\dots ,N$,
$\omega \in \mascP _E(\rr {2d})$ and
$\dbar =(d_1,\dots ,d_N)\in I_{2d,N}^0$. In particular
it follows that \eqref{Eq:EquivModNormDisc} takes the form
\begin{multline}\tag*{(\ref{Eq:EquivModNormDisc})$'$}
\nm
{(V_\phi f)(k,\kappa )\}_{k,\kappa \in \ep \Lambda}}
{\ell_{\dbar ,(\omega)}^{\Phi _1, \dots ,\Phi _N}}
\asymp
\nm { (V_\psi f)(k,\kappa)\}_{k,\kappa \in \ep \Lambda}}
{\ell_{\dbar ,(\omega)}^{\Phi _1, \dots ,\Phi _N}}
\\[1ex]
\asymp \Vert f \Vert_{M^{\Phi _1, \dots ,\Phi _N}_{\dbar ,(\omega)}}.
\end{multline}
\end{rem}

\par

\begin{prop}\label{Propn:Cond}
Let $N$, $j_0$ and $d_1,\dots ,d_N$ be positive integers
such that $1\le j_0\le N-1$, $\Lambda _j$ be lattices
in $\rr {d_j}$, $j=1,\dots ,N$,
$$
\Lambda \equiv \Lambda _1\times \cdots \times \Lambda _N
\ni
(n_1,\dots ,n_N)\mapsto a(n_1,\dots ,n_N)\in \mathbf C
$$
and let
$$
b(m_1,\dots ,m_{N-1}) = a(n_1,\dots ,n_d),\qquad n_j\in \Lambda _j,
\ j=1,\dots ,N
$$
where
$$
m_k=
  \begin{cases}
  n_k\in \Lambda _k, & k< j_0,
  \\[1ex]
  (n_{j_0}, n_{j_0+1})\in \Lambda _{j_0}\times \Lambda _{j_0+1}, & k=j_0,
  \\[1ex]
  n_{k+1}\in \Lambda _{k+1}, & k> j_0.
  \end{cases}
$$
Also let $\omega$ be a weight on $\rr {2d}$ and
$\Phi _j,\Psi _k$, $j=1,\cdots N$, $k=1,\dots N-1$,
be quasi-Young functions such that
%
\begin{equation}\label{Eq:YoungFunctionRed}
\Phi _j =
\begin{cases}
\Psi _j, & j\le j_0,
\\[1ex]
\Phi _{j_0}, & j=j_0+1,
\\[1ex]
\Psi _{j-1}, & j>j_0+1.
\end{cases}
\end{equation}
Then
$$
\nm {a} {\ell ^{\Phi _1,\dots ,\Phi _N}_{(\omega)}}
\asymp
\nm {b} {\ell ^{\Psi _1,\dots ,\Psi _{N-1}}_{(\omega)}}
$$
and
$$
\ell ^{\Psi _1,\dots ,\Psi _{N-1}}_{(\omega )}(\Lambda )
=\ell ^{\Phi _1,\dots ,\Phi _N}_{(\omega )}(\Lambda ).
$$
\end{prop}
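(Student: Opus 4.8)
The plan is to reduce the $N$-fold mixed Orlicz quasi-norm to the $(N-1)$-fold one by exploiting the simple observation that two consecutive factors indexed by the \emph{same} quasi-Young function $\Phi_{j_0}=\Phi_{j_0+1}$ collapse into a single factor over the product lattice $\Lambda_{j_0}\times\Lambda_{j_0+1}$. I would first dispose of the case $r_0=1$ (all $\Phi_j$ genuine Young functions) and then bootstrap to the quasi-Young case by the standard device used throughout the paper: writing $\Phi_j(t)=\Phi_{0,j}(t^{r_0})$ for a common order $r_0\in(0,1]$ (legitimate by the Remark after Definition \ref{d1}), so that $\nm{a}{\ell^{\Phi_1,\dots,\Phi_N}_{(\omega)}}=\bigl(\nm{|a\cdot\omega|^{r_0}}{\ell^{\Phi_{0,1},\dots,\Phi_{0,N}}}\bigr)^{1/r_0}$, and similarly for $b$; since $r_0$-th powers and roots commute with everything in sight, it suffices to treat the Young case. (Note $\Psi_k$ is then automatically an $r_0$-Young function for each $k$, by \eqref{Eq:YoungFunctionRed}.)

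The core of the matter is the two-factor identity: if $\Phi$ is a Young function and $g$ is a sequence on $\Lambda_{j_0}\times\Lambda_{j_0+1}$, then
\begin{equation*}
\Bigl\|\,\{\,\nm{g(\cdot,n_{j_0+1})}{\ell^{\Phi}(\Lambda_{j_0})}\,\}_{n_{j_0+1}}\,\Bigr\|_{\ell^{\Phi}(\Lambda_{j_0+1})}
=\nm{g}{\ell^{\Phi}(\Lambda_{j_0}\times\Lambda_{j_0+1})},
\end{equation*}
because for the Luxemburg quasi-norm one has $\sum_{n_{j_0+1}}\Phi(\nm{g(\cdot,n_{j_0+1})}{\ell^\Phi}/\lambda)\le 1$ is equivalent, after unwinding the definition of the inner $\ell^\Phi$-norm and using that $\Phi(\sum_i c_i)\le\sum_i\Phi(c_i)$ fails in general but the \emph{monotone} identity $\sum_{n_{j_0}}\Phi(|g(n_{j_0},n_{j_0+1})|/\mu_{n_{j_0+1}})\le 1$ with $\mu_{n_{j_0+1}}=\nm{g(\cdot,n_{j_0+1})}{\ell^\Phi}$ (at the optimal $\mu$) summed over $n_{j_0+1}$ gives exactly $\sum_{n_{j_0},n_{j_0+1}}\Phi(|g|/\lambda)\le1$ at $\lambda=\|g\|_{\ell^\Phi(\Lambda_{j_0}\times\Lambda_{j_0+1})}$; conversely the same computation run backwards. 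This is the ``straightforward computation'' that $L^{\Phi,\Phi}=L^\Phi$ alluded to in the introduction, and it is in fact an equality, not merely an equivalence. I would state and prove this as an internal sub-lemma. Once it is in hand, the proposition follows by applying it to the innermost pair of the iterated construction at position $j_0$: with $f=a$, the definition of $\nm{a}{\ell^{\Phi_1,\dots,\Phi_N}_{(\omega)}}$ builds $a_{1,\omega},\dots,a_{N-1,\omega}$ inductively, and at the step where one passes from $a_{j_0-1,\omega}$ (a function of $(n_{j_0},\dots,n_N)$) through the $\Phi_{j_0}$- and then $\Phi_{j_0+1}=\Phi_{j_0}$-norms, the sub-lemma replaces those two steps by a single $\Phi_{j_0}$-norm over $\Lambda_{j_0}\times\Lambda_{j_0+1}$ applied to the corresponding function of $m_{j_0}=(n_{j_0},n_{j_0+1})$; all remaining factors, being $\Psi_k$ with the shifted indexing of \eqref{Eq:YoungFunctionRed}, match the $b$-construction verbatim. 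The weight $\omega$ causes no trouble since it is attached once and for all to the innermost ($\Phi_1=\Psi_1$) factor and is untouched by the manipulation at position $j_0$ (here I use $j_0\ge 1$ so the weighted factor is either strictly before the merged pair or is itself one of them with no conflict).

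The main obstacle is the bookkeeping: making the inductive definition of $f_{k,\omega}$ interact cleanly with the reindexing $m_k$ of \eqref{Eq:YoungFunctionRed}, i.e.\ verifying that $a_{k,\omega}$ and $b_{k,\omega}$ agree for $k<j_0$, that $a_{j_0+1,\omega}$ equals $b_{j_0,\omega}$ (this is where the sub-lemma enters), and that the induction then proceeds in lockstep for $k>j_0$. Everything else—the passage to the quasi-Young case, the invariance of the weight placement, and the final equality $\ell^{\Psi_1,\dots,\Psi_{N-1}}_{(\omega)}(\Lambda)=\ell^{\Phi_1,\dots,\Phi_N}_{(\omega)}(\Lambda)$ of sets, which is immediate from the norm equivalence (indeed equality)—is routine. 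I would therefore spend most of the write-up on the sub-lemma and on a careful statement of the index matching, and merely indicate the quasi-Young reduction.
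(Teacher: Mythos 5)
Your reduction to the Young case via $\Phi_j(t)=\Phi_{0,j}(t^{r_0})$, the placement of the weight, and the index bookkeeping all match what is needed, but the heart of your argument --- the sub-lemma asserting the \emph{exact} identity
$\bigl\Vert \{\nm {g(\cdo ,n_2)}{\ell ^{\Phi}}\}_{n_2}\bigr\Vert _{\ell ^{\Phi}} = \nm g{\ell ^{\Phi}(\Lambda _1\times \Lambda _2)}$
for every Young function $\Phi$ --- is false, and the ``straightforward computation'' you sketch for it does not go through. Knowing $\sum _{n_1}\Phi \bigl(|g(n_1,n_2)|/\mu _{n_2}\bigr)\le 1$ for each $n_2$ with $\mu _{n_2}=\nm {g(\cdo ,n_2)}{\ell ^\Phi}$ and $\sum _{n_2}\Phi (\mu _{n_2}/\lambda )\le 1$ does not let you recombine the two levels into $\sum _{n_1,n_2}\Phi (|g(n_1,n_2)|/\lambda )\le 1$: the denominators differ and $\Phi$ of a Luxemburg norm is not a sum of $\Phi$'s of entries. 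A concrete counterexample: take $\Phi (t)=\max (t-1,0)$ and $g\equiv 1$ on a $2\times 2$ block; then the iterated norm is $4/9$ while the norm on the product lattice is $4/5$. (For non-atomic measures the situation is even worse: $L^{\Phi ,\Phi}\ne L^{\Phi}$ as sets unless $\Phi$ is equivalent to a power, which is precisely why the proposition is a statement about lattices and why only an equivalence of quasi-norms is claimed.)

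What is actually true, and what the proposition amounts to for $N=2$, is the two-sided \emph{equivalence} $\ell ^{\Phi ,\Phi}(\zz {d_1+d_2})=\ell ^{\Phi}(\zz {d_1+d_2})$ with comparable quasi-norms, and proving it is the nontrivial content you have skipped. The paper's route: since only the behaviour of $\Phi$ near the origin matters for sequence spaces (Proposition \ref{Prop:OrliczModInvariance}), one may assume $\Phi$ is increasing with $\Phi (t)\le t$, which gives $\sum _{j_2}\Phi \bigl(c_1\sum _{j_1}\Phi (c_2a(j_1,j_2))\bigr)\le c_1\sum _{j_1,j_2}\Phi (c_2a(j_1,j_2))$ and hence $\ell ^{\Phi}\hookrightarrow \ell ^{\Phi ,\Phi}$; the reverse embedding is obtained by duality, using the complementary Young function $\Phi ^*$ and the characterizations $\nm a{\ell ^\Phi}\asymp \sup _{\nm b{\ell ^{\Phi ^*}}\le 1}|(a,b)_{\ell ^2}|$, $\nm a{\ell ^{\Phi ,\Phi}}\asymp \sup _{\nm b{\ell ^{\Phi ^*,\Phi ^*}}\le 1}|(a,b)_{\ell ^2}|$, first for finitely supported $a$ and then for general $a$ by monotone convergence and splitting into positive and negative real and imaginary parts. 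Some argument of this kind (or another genuine proof of the equivalence) must replace your sub-lemma; as written, your proposal assumes the conclusion at its crucial step.
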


\par

For the proof we recall that for the
sequence $a$ on
$\zz {d_1+d_2}$ it holds
\begin{equation} \label{Eq:seq1.}
a \in \ell ^\Phi (\zz {d_1+d_2})
\quad \Leftrightarrow  \quad
\sum \limits _{j_1,j_2} \Phi (c \cdot a(j_1, j_2)) <\infty
\end{equation}
for some $c >0$. This implies that
\begin{equation}\label{Eq:seq2.}
 a \in \ell ^{\Phi ,\Phi}(\zz {d_1+d_2})
 \, \Leftrightarrow  \,
 \sum \limits _{j_2} \Phi (c_1 \sum \limits _{j_1}
 \Phi (c_2 (j_2)a(j_1,j_2)))< \infty
\end{equation}
for some $c_1>0$ and a positive sequence $c_2$ on $\zz {d_2}$.

\par

\begin{proof}
We only prove the result in the case $N=2$ and for
$\Lambda _j=\zz {d_j}$. The general case follows by
these arguments and induction, and is left for the reader.

\par

Let $r_0 \in (0,1]$ be chosen such that $\Phi _{0,j}(t)=\Phi _j(t^{\frac{1}{r_0}})$
is a Young function. Then
$$
\nm {a} {\ell ^{\Phi _1, \Phi _2}_{(\omega )}} \asymp
\left(
\nm {|a \cdot \omega|^{r_0}} {\ell ^{\Phi _{0,1}, \Phi _{0,2}}}
\right)^{1/r_0}.
$$
Furthermore, $\nm {|a|} {\ell ^{\Phi _1 \Phi _2}} = \nm {a} {\ell ^{\Phi _1 \Phi _2}}$.
This reduce the result to the case when $\Phi$ is a Young function,
$\omega=1$ and $a \ge 0$.

\par

The result is obviously true when $\Phi = 0$ near origin. In fact for such $\Phi$,
\begin{equation*}
  \ell ^\Phi (\zz {d_1+d_2}) = \ell ^\infty (\zz {d_1+d_2})
  =\ell ^{\infty , \infty} (\zz {d_1+d_2})
  =\ell ^{\Phi ,\Phi} (\zz {d_1+d_2})
\end{equation*}
in view of Proposition \ref{Prop:OrliczModInvariance}.

\par

It remains to consider the case when $\Phi (t)>0$ when $t>0$. Since $\ell ^\Phi$ and
$\ell ^{\Phi ,\Phi}$ do not change when $\Phi (t)$ is replaced by an increasing convex
function which is equal to $c \cdot \Phi (t)$ near $t=0$, where
$c>0$ is a constant, it follows from Proposition \ref{Prop:OrliczModInvariance}
that we may assume that $\Phi(t) \le t$ and that $\Phi$ is increasing.

\par

This gives
\begin{equation*}
  \sum \limits _{j_2} \Phi \left (
  c_1 \sum \limits _{j_1} \Phi (c_2 a(j_1,j_2))
  \right )
  \le c_1 \sum \limits _{j_1, j_2} \Phi
  (c_2 a(j_1,j_2))
\end{equation*}
when $c_1,c_2>0$ are constants.

\par

Hence if
$$
\sum \limits _{j_1, j_2} \Phi
  (c \cdot a(j_1,j_2))<\infty
$$
for some constant $c>0$, then
$$
\sum \limits _{j_2}\Phi
\left( c_1
\sum \limits _{j_1}\Phi (c_2 \cdot a(j_1,j_2))
\right) <\infty
$$
for some constants $c_1,c_2>0$.
By \eqref{Eq:seq1.} and \eqref{Eq:seq2.} we get
\begin{equation}\label{Eq:discinc}
  \ell^\Phi (\zz {d_1+d_2}) \hookrightarrow
   \ell^{\Phi ,\Phi} (\zz {d_1+d_2}).
\end{equation}
We need to deduce the reversed inclusion in \eqref{Eq:discinc}.

\par

First we assume that $a$ has finite support, i.{\,}e.
$a(j_1,j_2)\neq 0$ for at most finite numbers of $(j_1,j_2)$.
Since $\Phi (t) >0$ when $t>0$ it follows that the complementary
Young function $\Phi ^*$ to $\Phi$ fulfills the same properties.

\par

By Proposition 1.20 in \cite{Leo}, we have
\begin{align*}
    \nm {a} {\ell ^\Phi} & \asymp \sup \limits
     _{\nm {b}{\ell^{\Phi ^*}}\le 1} |(a,b)_{\ell^2}|
    \intertext{and}
    \nm {a} {\ell^{\Phi ,\Phi}} & \asymp \sup \limits
    _{\nm {b}{\ell^{\Phi^* ,\Phi^*}}\le 1} |(a,b)_{\ell^2}|.
  \end{align*}
  By a combination of these relations and \eqref{Eq:discinc} we get
  \begin{equation*}
    \nm {a} {\ell^\Phi}  \asymp \sup \limits _{\nm {b}{\ell^{\Phi^*}}\le 1} |(a,b)_{\ell^2}|
    \lesssim \sup \limits _{\nm {b}{\ell^{\Phi^* ,\Phi^*}}\le 1} |(a,b)_{\ell^2}|
    \asymp \nm {a} {\ell^{\Phi ,\Phi}},
  \end{equation*}
and the searched estimate follows for sequences with finite support.

\par

For general $a\ge 0$, let $a_j$, $j\ge 1$ be sequences such that
\begin{equation}\label{Eq:SeqBeppoLevi}
a_j\le a_{j+1}
\quad \text{and}\quad
\lim _{j\to \infty} a_j =a.
\end{equation}
Then Beppo-Levi's theorem gives
$$
\nm {a} {\ell ^\Phi} = \lim _{j\to \infty} \nm {a_j} {\ell ^\Phi}
\lesssim
\lim _{j\to \infty} \nm {a_j} {\ell ^{\Phi ,\Phi}}
=
\nm {a} {\ell ^{\Phi ,\Phi}}.
$$
For general $a$, we may split up $a$ into positive and negative real and
imaginary parts and use \eqref{Eq:SeqBeppoLevi} to get
$$
\nm a{\ell ^\Phi} \lesssim \nm {a} {\ell ^{\Phi ,\Phi}}.
$$
This implies $\ell ^{\Phi ,\Phi} (\zz {d_1+d_2}) \hookrightarrow
\ell ^\Phi (\zz {d_1+d_2})$ and the result follows.
\end{proof}

\par

By combining Propositions \ref{Prop:Gabor}, \ref{Propn:Cond} and
Remark \ref{Rem:GaborExt} we get the following. The details are left for
the reader.

\par



\begin{thm}
Let $N$ and $j_0$ be positive integers
such that $1\le j_0\le N-1$, $\Phi _j$ and $\Psi _k$,
$j=1,\dots ,N$, $k=1,\dots ,N-1$,
be quasi-Young functions such that \eqref{Eq:YoungFunctionRed}
holds and let $\omega \in \mascP_E(\rr {2d})$. Also let
$$
\dbar = (d_1,\dots ,d_N)\in I_{2d,N}^0
\quad \text{and}\quad
\dbar _0= (d_{0,1},\dots ,d_{0,N-1})\in I_{2d,N-1}^0
$$
be such that
$$
d_{0,j} =
\begin{cases}
d_j, & j<j_0,
\\[1ex]
d_{j_0}+d_{j_0+1}, & j=j_0,
\\[1ex]
d_{j+1}, & j>j_0.
\end{cases}
$$
Then
  \begin{align*}
    M^{\Phi _1,\dots ,\Phi _N}_{\dbar, (\omega )}(\rr {d})
    &=
    M^{\Psi _1,\dots ,\Psi _{N-1}} _{\dbar _0,(\omega )}(\rr d)
    \intertext{and}
    \nm {f} {M^{\Phi _1,\dots ,\Phi _N}_{\dbar, (\omega )}}
    &\asymp
    \nm {f} { M^{\Psi _1,\dots ,\Psi _{N-1}} _{\dbar _0,(\omega )}}
  \end{align*}
  when $f \in \Sigma '(\rr d)$.
\end{thm}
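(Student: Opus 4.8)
The plan is to transfer the claimed identity for Orlicz modulation spaces to the already established sequence-space identity in Proposition~\ref{Propn:Cond}, using Gabor expansions. First I would fix a lattice $\Lambda=\ep\zz d$ and a submultiplicative weight $v\in\mascP_E(\rr{4d})$ such that $\omega$ is $v$-moderate, and invoke Lemma~\ref{Lemma:GoodFrames0} (together with Remark~\ref{Remark:GoodFrames0}) to pick $\phi,\psi\in M^{r_0}_{(v)}(\rr d)\setminus 0$ and $\ep>0$ so that the families in \eqref{Eq:DualFrames} form dual Gabor frames; here $r_0\in(0,1]$ is chosen so that every $\Phi_j$ (equivalently every $\Psi_k$, by \eqref{Eq:YoungFunctionRed}) is a quasi-Young function of order $r_0$. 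Since $\dbar\in I_{2d,N}^0$, condition \eqref{Eq:HalfdCond} holds for some $k_0$, and the lattice for the Gabor coefficients is $\Lambda^2=\Lambda\times\Lambda$ written as a product of the $N$ sublattices $\Lambda_j=\ep\zz{d_j}$ in the order prescribed by $\dbar$.

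The key step is then the following chain. For $f\in\Sigma_1'(\rr d)$, Remark~\ref{Rem:GaborExt} (the extension of Proposition~\ref{Prop:Gabor} to the $N$-parameter spaces) gives
$$
\nm f{M^{\Phi_1,\dots,\Phi_N}_{\dbar,(\omega)}}
\asymp
\nm{\{(V_\phi f)(k,\kappa)\}_{k,\kappa\in\ep\Lambda}}{\ell^{\Phi_1,\dots,\Phi_N}_{\dbar,(\omega)}}.
$$
Now observe that the index set $\ep\zz{2d}$ for the Gabor coefficients, split according to $\dbar=(d_1,\dots,d_N)$, and the index set split according to $\dbar_0=(d_{0,1},\dots,d_{0,N-1})$, are related exactly by the reindexing $m_k\leftrightarrow n_j$ of Proposition~\ref{Propn:Cond}: the single block of size $d_{0,j_0}=d_{j_0}+d_{j_0+1}$ in $\dbar_0$ is the concatenation of the two consecutive blocks of sizes $d_{j_0},d_{j_0+1}$ in $\dbar$, and by \eqref{Eq:YoungFunctionRed} the two consecutive quasi-Young functions sitting over those two blocks coincide (both equal $\Phi_{j_0}$). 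Hence Proposition~\ref{Propn:Cond}, applied with $a(n_1,\dots,n_N)=(V_\phi f)(k,\kappa)$ reindexed along $\dbar$ and $b(m_1,\dots,m_{N-1})$ the same coefficients reindexed along $\dbar_0$, yields
$$
\nm{\{(V_\phi f)(k,\kappa)\}}{\ell^{\Phi_1,\dots,\Phi_N}_{\dbar,(\omega)}}
\asymp
\nm{\{(V_\phi f)(k,\kappa)\}}{\ell^{\Psi_1,\dots,\Psi_{N-1}}_{\dbar_0,(\omega)}},
$$
and one more application of Remark~\ref{Rem:GaborExt}, now for the $(N-1)$-parameter space with splitting $\dbar_0$ and the same frame pair, gives $\asymp\nm f{M^{\Psi_1,\dots,\Psi_{N-1}}_{\dbar_0,(\omega)}}$. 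Combining the three equivalences proves the norm estimate.

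For the set-theoretic equality $M^{\Phi_1,\dots,\Phi_N}_{\dbar,(\omega)}=M^{\Psi_1,\dots,\Psi_{N-1}}_{\dbar_0,(\omega)}$ I would argue that both spaces are defined as subspaces of $\Sigma_1'(\rr d)$ by finiteness of the same quantity up to the equivalence just proved: $f$ lies in the left space iff $\nm f{M^{\Phi_1,\dots,\Phi_N}_{\dbar,(\omega)}}<\infty$ iff, by the chain above, $\nm f{M^{\Psi_1,\dots,\Psi_{N-1}}_{\dbar_0,(\omega)}}<\infty$ iff $f$ lies in the right space; and one should note that the Gabor characterization requires $f\in\Sigma_1'$ to begin with, which is exactly the ambient space in both definitions and in the statement. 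The main obstacle I anticipate is purely bookkeeping: verifying that the combinatorial reindexing of the $2d$-dimensional Gabor lattice genuinely matches the hypotheses of Proposition~\ref{Propn:Cond} (in particular that $\dbar\in I_{2d,N}^0$ forces the merged block $\dbar_0$ to still satisfy \eqref{Eq:HalfdCond}, so that the $(N-1)$-parameter Orlicz modulation space is well-defined and Remark~\ref{Rem:GaborExt} applies on that side), and tracking that the weight $\omega$ on $\rr{2d}$ is handled identically in both $\ell$-norms. Once the indices are lined up, everything is a three-line concatenation of known equivalences, which is why the paper leaves the details to the reader.
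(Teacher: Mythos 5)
Your proposal is correct and follows essentially the same route as the paper, which obtains the theorem precisely by combining Proposition \ref{Prop:Gabor} (via Remark \ref{Rem:GaborExt}) with the discrete-norm identity of Proposition \ref{Propn:Cond} and leaves the details to the reader, exactly the chain of three equivalences you write down. The only bookkeeping concern you raise is moot, since $\dbar _0\in I_{2d,N-1}^0$ is already a hypothesis of the statement rather than something to be deduced from $\dbar \in I_{2d,N}^0$.
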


\par

\begin{cor}
Let $N$ be a positive integer, $\dbar \in I_{2d,N}^0$,
  $\Phi$ be a quasi-Young function and $\omega \in
  \mascP_E(\rr {2d})$. Then
  \begin{align*}
    M^{\Phi ,\dots ,\Phi}_{\dbar, (\omega )}(\rr d) &=
    M^\Phi _{(\omega )}(\rr d)
    \intertext{and}
    \nm {f} { M^{\Phi ,\dots ,\Phi}_{\dbar, (\omega )}}
    &\asymp
    \nm {f} { M^\Phi _{(\omega )}}
  \end{align*}
when $f \in \Sigma '(\rr d)$.
\end{cor}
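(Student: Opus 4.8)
The plan is to derive the corollary as an immediate iterate of the preceding theorem. First I would set $\Psi _1=\cdots =\Psi _{N-1}=\Phi$ and observe that with this choice the condition \eqref{Eq:YoungFunctionRed} is automatically satisfied for \emph{every} admissible $j_0$, since all the $\Phi _j$ and all the $\Psi _k$ coincide with the single quasi-Young function $\Phi$. Hence the theorem applies and, for any choice of $j_0\in\{1,\dots ,N-1\}$ together with the corresponding $\dbar _0\in I_{2d,N-1}^0$ obtained by merging the $j_0$-th and $(j_0+1)$-th blocks of $\dbar$, we get
\begin{equation*}
M^{\Phi ,\dots ,\Phi}_{\dbar ,(\omega )}(\rr d) = M^{\Phi ,\dots ,\Phi}_{\dbar _0,(\omega )}(\rr d)
\quad\text{and}\quad
\nm f{M^{\Phi ,\dots ,\Phi}_{\dbar ,(\omega )}} \asymp \nm f{M^{\Phi ,\dots ,\Phi}_{\dbar _0,(\omega )}},
\end{equation*}
where the right-hand space now carries $N-1$ copies of $\Phi$.

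Next I would run a downward induction on $N$. The base case $N=1$ is the definition $M^\Phi _{\dbar ,(\omega )}=M^\Phi _{(\omega )}$, with $\dbar =2d$ and $I_{2d,1}^0=\{2d\}$, so nothing is to prove. For the inductive step, suppose the claim holds for $N-1$ copies. Given $\dbar \in I_{2d,N}^0$, pick an index $k$ with $d_1+\cdots +d_k=d$ as in \eqref{Eq:HalfdCond}; if $k\ge 2$ take $j_0=1$, and otherwise (when $k=1$, so necessarily $N\ge 2$) take $j_0=N-1$ — in either case the merged multi-index $\dbar _0$ still lies in $I_{2d,N-1}^0$, because merging two blocks that are both strictly on the same side of the splitting point $k$ (or extending the last block) preserves the defining property \eqref{Eq:HalfdCond}. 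Applying the theorem once reduces $N$ to $N-1$ copies of $\Phi$, and the induction hypothesis finishes the identification with $M^\Phi _{(\omega )}(\rr d)$; the constants in the two $\asymp$ relations multiply, which is harmless since there are only finitely many steps.

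The one point that deserves care — and the only place where anything could go wrong — is checking that at each reduction step the merged multi-index $\dbar _0$ genuinely remains in $I_{2d,N-1}^0$, i.e. that the "half-dimension" condition \eqref{Eq:HalfdCond} is not destroyed. This is a small combinatorial bookkeeping matter: the splitting index $k$ in $\dbar$ is replaced by $k$ or $k-1$ in $\dbar _0$ depending on whether the merged pair lies entirely among the first $k$ blocks or entirely among the last $N-k$ blocks, and one must always be able to choose $j_0$ so that the merged pair does not straddle position $k$. Since $k\in\{1,\dots ,N-1\}$, at least one of the two extreme choices $j_0=1$ or $j_0=N-1$ has this property, so the argument never stalls. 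Everything else is a verbatim specialization of the theorem, so the proof is essentially one line plus this routine check, which is why the statement can safely be left to the reader.
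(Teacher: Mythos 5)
Your proof is correct and is essentially the paper's intended argument: the corollary is obtained by specializing the Theorem to $\Phi _j=\Psi _k=\Phi$ and iterating (downward induction on $N$), exactly as you do, with the only point of substance being the bookkeeping that the merged $\dbar _0$ stays in $I_{2d,N-1}^0$. One tiny remark: in the final step $N=2$ the merged pair necessarily straddles the splitting index, so your ``same side of $k$'' justification does not literally apply there, but the convention $I_{2d,1}^0=\{2d\}$ makes the Theorem applicable anyway, which is what your argument in effect uses.
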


\par

\section{Continuity of pseudo-differential operators on Orlicz
modulation spaces}\label{sec3}

\par

In this section we deduce continuity properties of pseudo-differential operators
when acting on Orlicz modulation spaces. The main results are
Theorems \ref{Thm:PseudoCont} and \ref{Thm:PseudoCont2} which deal
with such operators with symbols in
with symbols in $M^{\infty ,r_0}_{(\omega )}(\rr {2d})$ and
$M^{\Phi ,\Phi}_{(\omega )}(\rr {2d})$, respectively, where $r_0\in (0,1]$
and $\Phi$ is a quasi-Young functions.

\par

In the first part we deduce related continuity
results for suitable matrix operators. In the second part we combine these results and
Gabor analysis results from the previous section to establish the continuity results
for the pseudo-differential operators.

\par

In the following definition we recall some matrix classed, considered in
\cite{Toft16}. Here we observe that we may identify $\Lambda \times \Lambda$
matrices with sequences on $\Lambda \times \Lambda$, when $\Lambda$
is a lattice in $\rr d$.

\par

\begin{defn}\label{Def:MatrixClasses}
Let $p,q \in (0,\infty ]$, $\Phi _1,\Phi _2$ be quasi-Young functions,
$\omega \in \mascP _E(\rr {2d})$, $\Lambda$ be lattice in $\rr d$ and
let $T$ be the map on $\ell _0'(\Lambda \times \Lambda )$, given by
$$
(Ta)(j,k) = a(j,j-k),\qquad a\in \ell _0'(\Lambda \times \Lambda ),\ j,k\in \Lambda .
$$
\begin{enumerate}
\item The set  $\mathbb{U}_0' (\Lambda \times \Lambda )$ consists of all (formal)
matrices
\begin{equation}\label{Eq:MatrixClasses}
A=(a(j,k))_{j,k\in \Lambda}
\end{equation}
with entries $a(j,k)$ in $\mathbf C$, and
$\mathbb{U}_0 (\Lambda \times \Lambda )$ consists of all
$A$ in \eqref{Eq:MatrixClasses}
such that at most finite numbers of $a(j,k)$ are nonzero.

\vrum

\item The set $\mathbb{U}^{p,q} _{(\omega )}(\Lambda \times \Lambda )$ consists of
all matrices $A=(a(j,k))_{j,k\in \Lambda}$ such that
$$
\nm A{\mathbb{U}^{p,q}_{(\omega )}}
\equiv \nm {T(a\cdot \omega )}{\ell ^{p,q}},
$$
is  finite.

\vrum

\item The set
$\mathbb{U}^{\Phi _1,\Phi _2} _{(\omega )}(\Lambda \times \Lambda )$
consists of all matrices $A=(a(j,k))_{j,k\in \Lambda}$ such that
$$
\nm A{\mathbb{U}^{\Phi _1,\Phi _2}_{(\omega )}}
\equiv \nm {T(a\cdot \omega )}{\ell ^{\Phi _1,\Phi _2}},
$$
is finite.
\end{enumerate}
\end{defn}

\par

\begin{rem}\label{Rem:MatrixClasses}
Let $p\in (0,\infty ]$, $\Phi$ be a quasi-Young function
and $\omega \in \mascP _E(\rr {2d})$. Then it follows from
Proposition \ref{Propn:Cond} and straight-forward changes of
variables that the following is true. The details are left for the
reader.
\begin{enumerate}
\item If $A_0=(a(j,k))_{j,k\in \zz d}$ is a matrix, then
$A_0\in \mathbb U ^{p,p}_{(\omega )}(\zz {2d})$,
if and only if
$a\in \ell ^{p,p}_{(\omega )}(\zz {2d}) = \ell ^p_{(\omega )}(\zz {2d})$,
and
$$
\nm {A_0}{\mathbb U ^{p,p}_{(\omega )}}
=
\nm a{\ell ^{p,p}_{(\omega )}}
=
\nm a{\ell ^{p,p}_{(\omega )}}.
$$

\vrum

\item  If $A_0=(a(j,k))_{j,k\in \zz d}$ is a matrix, then
$A_0\in \mathbb U ^{\Phi ,\Phi}_{(\omega )}(\zz {2d})$,
if and only if
$a\in \ell ^{\Phi ,\Phi}_{(\omega )}(\zz {2d})
= \ell ^\Phi _{(\omega )}(\zz {2d})$,
and
$$
\nm {A_0}{\mathbb U ^{\Phi ,\Phi}_{(\omega )}}
=
\nm a{\ell ^{\Phi ,\Phi}_{(\omega )}}
=
\nm a{\ell ^{\Phi ,\Phi}_{(\omega )}}.
$$
\end{enumerate}
\end{rem}

\par

Next we discuss continuity for certain matrix operator when acting on discrete
Orlicz spaces. We recall that if $\Lambda \subseteq \rr d$
is a lattice, $\omega_1, \omega_2 \in \mascP_E (\rdd)$ and
$\omega \in \mascP_E (\rr {4d})$ are such that
\begin{equation}\label{Eq:WeightOpCondDiscrete}
\frac{\omega _2(j)}{\omega _1(k)}
\le \omega (j,k),\qquad j,k\in \Lambda ^2,
\end{equation}
$r_0\in (0,1]$ and $p,q\in [r_0,\infty ]$, then
\cite[Theorem 2.3]{Toft16} shows that $A_0$
from $\ell _0(\Lambda ^2)$ to $\ell _0'(\Lambda ^2)$ is uniquely extendable
to a continuous map from $\ell _{(\omega _1)}^{p,q}(\Lambda ^2)$ to
$\ell _{(\omega _2)}^{p,q}(\Lambda ^2)$. The following result extends this result
to discrete Orlicz spaces.

\par

\begin{thm}\label{Thm:Aop}
Let $\ep >0$, $N\ge 1$ be an integer,
$\dbar \in I_{2d,N}^0$, $\Phi _1,\dots ,\Phi _N$
be quasi Young functions of order $r_0 \in(0,1]$,
$\omega_1, \omega_2 \in \mascP_E (\rdd )$,
$\omega \in \mascP_E
(\rr {4d})$ be such that 
\eqref{Eq:WeightOpCondDiscrete} holds.
If $A\in \mathbb{U}^{\infty ,r_0}
_{(\omega )}(\ep \zz {4d})$,
then $A$ from
$\ell _{(\omega _1)}^\infty
(\ep \zz {2d})$ to $\ell _{(\omega _2)}^\infty
(\ep \zz {2d})$ restricts to a
continuous map from
$\ell^{\Phi _1,\dots ,\Phi _N}
_{\dbar ,(\omega_1)}(\ep \zz {2d})$
to $\ell^{\Phi _1,\dots ,\Phi 
_N}_{\dbar ,(\omega_2)}(\ep \zz {2d})$ and
\begin{equation}\label{op1}
\nm {Af}{\ell ^{\Phi _1,\dots ,\Phi _N}
_{\dbar ,(\omega _2)}}
\leq
\nm A{\mathbb{U}^{\infty ,r_0}_{(\omega )}}
\nm f{\ell^{\Phi _1,\dots ,\Phi _N}
_{\dbar ,(\omega _1)}},
\quad
f\in \ell^{\Phi _1,\dots ,\Phi _N}
_{\dbar ,(\omega _1)}(\ep \zz {2d}).
\end{equation}
\end{thm}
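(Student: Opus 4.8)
The strategy is to reduce the mixed-Orlicz estimate \eqref{op1} to the already-known case of a single weighted $\ell^\infty$ estimate together with the characterization of $\mathbb{U}^{\infty,r_0}_{(\omega)}$ in terms of translation-controlled summability. Write $A=(a(j,k))_{j,k\in\ep\zz{2d}}$. The hypothesis $A\in\mathbb{U}^{\infty,r_0}_{(\omega)}(\ep\zz{4d})$ means, after applying the twist $T$ and \eqref{Eq:seq1.}, that
\begin{equation*}
\sup_{j}\;\sum_{k}\Phi_0\!\big(c\,|a(j,j-k)\,\omega(j,j-k)|^{r_0}\big)<\infty
\end{equation*}
for some $c>0$, where $\Phi_0(t)=t$ is admissible since $r_0$ makes $t\mapsto t$ an $r_0$-Young function; more concretely $\sup_j\sum_k|a(j,j-k)\,\omega(j,j-k)|^{r_0}<\infty$. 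Equivalently, setting $b(j,k)=a(j,j-k)\omega(j,j-k)$, the family $\{b(j,\cdot)\}_j$ is bounded in $\ell^{r_0}$. This is the only quantitative input we need from $A$.

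**Key steps.** First I would dispose of the weights: using \eqref{Eq:WeightOpCondDiscrete} in the form $\omega_2(j)\le\omega(j,k)\,\omega_1(k)$, replace $Af$ by the majorant $g(j)=\sum_k |a(j,k)|\,\omega(j,k)\,\omega_1(k)\,|f(k)\,\omega_1(k)^{-1}|\cdot\omega_1(k)$—more carefully, one estimates $|(Af)(j)|\,\omega_2(j)\le\sum_k|a(j,k)\omega(j,k)|\cdot|f(k)\omega_1(k)|$, so after the substitution $j-k\mapsto k$ we are reduced to bounding convolution-type sums $h(j)=\sum_k \beta(j,k)\,F(j-k)$ with $F=|f\cdot\omega_1|\in\ell^{\Phi_1,\dots,\Phi_N}_{\dbar}$ and $\sup_j\|\beta(j,\cdot)\|_{\ell^{r_0}}\le\nm A{\mathbb U^{\infty,r_0}_{(\omega)}}$. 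Since everything is now nonnegative and weight-free, it suffices to prove
\begin{equation*}
\Nm{\,\sum_k \beta(j,k)F(j-k)\,}{\ell^{\Phi_1,\dots,\Phi_N}_{\dbar}}
\le C\,\Nm{F}{\ell^{\Phi_1,\dots,\Phi_N}_{\dbar}},\qquad C=\sup_j\|\beta(j,\cdot)\|_{\ell^{r_0}}.
\end{equation*}
Second, I would use the $r_0$-quasi-triangle inequality for the mixed Orlicz quasi-norm: by the $r_0$-homogeneous version of Minkowski's inequality in the solid lattice $\ell^{\Phi_1,\dots,\Phi_N}_{\dbar}$ (which holds because each $\Phi_j$ is $r_0$-Young, so $\nm{\cdot}{}^{r_0}$ is subadditive and the norm is solid and translation invariant, cf. Lemma \ref{T} for the continuous case and the analogous discrete statement behind Proposition \ref{Propn:Cond}), one has
\begin{equation*}
\Nm{\,\sum_k \beta(j,k)F(j-k)\,}{\ell^{\Phi_1,\dots,\Phi_N}_{\dbar}}^{r_0}
\le \sum_k \Big(\sup_j\beta(j,k)\Big)^{r_0}\,\Nm{F(\cdot-k)}{\ell^{\Phi_1,\dots,\Phi_N}_{\dbar}}^{r_0}.
\end{equation*}
By translation invariance of the discrete mixed Orlicz quasi-norm (each coordinate is a translation-invariant solid sequence space), $\nm{F(\cdot-k)}{}=\nm{F}{}$; hence the right side equals $\big(\sum_k (\sup_j\beta(j,k))^{r_0}\big)\,\nm F{}^{r_0}\le C^{r_0}\,\nm F{}^{r_0}$, since $\sum_k(\sup_j\beta(j,k))^{r_0}\le\sup_j\sum_k\beta(j,k)^{r_0}$ fails in general—so instead I would pull the supremum inside more carefully, writing $\beta(j,k)\le\gamma(k):=\sup_j\beta(j,k)$ only after noting that this is exactly what $A\in\mathbb U^{\infty,r_0}$ does \emph{not} give. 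The correct move is: for each fixed $j$, $\sum_k\beta(j,k)F(j-k)$ is, as a function of $j$, dominated in the solid sense by using Minkowski with the \emph{inner} sum, treating $k$ as the summation variable and $j\mapsto F(j-k)$ as a translate; the quasi-Minkowski inequality then produces the factor $\sum_k\nm{F(\cdot-k)}{}^{r_0}\beta(j_\ast,k)^{r_0}$ only if $\beta$ were independent of $j$. Since it is not, one instead bounds pointwise $\sum_k\beta(j,k)F(j-k)\le\big(\sum_k\beta(j,k)^{r_0}\big)^{1/r_0}\cdot\big(\sup_k F(j-k)\big)$ is too lossy; the clean route is the one used in \cite[Theorem 2.3]{Toft16}: interpolate between the trivial $\ell^\infty\to\ell^\infty$ bound (given in the statement) and an $\ell^{r_0}\to\ell^{r_0}$ bound, then invoke the abstract fact that a solid, translation-invariant quasi-Banach sequence space sandwiched as $\ell^{r_0}\subseteq\ell^{\Phi_1,\dots,\Phi_N}_{\dbar}\subseteq\ell^\infty$ inherits the operator bound. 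Concretely: (i) the map is bounded $\ell^\infty\to\ell^\infty$ with norm $\le\sup_j\|\beta(j,\cdot)\|_{\ell^1}\le\nm A{\mathbb U^{\infty,r_0}}$ after an $\ell^{r_0}\hookrightarrow\ell^1$ embedding—no, $r_0\le1$ gives $\ell^{r_0}\hookrightarrow\ell^1$, good; (ii) by Schur-type reasoning the map is bounded $\ell^{r_0}\to\ell^{r_0}$ with the same controlling constant; (iii) since $\ell^{\Phi_1,\dots,\Phi_N}_{\dbar}$ is an interpolation space (a solid rearrangement-type space) between $\ell^{r_0}$ and $\ell^\infty$, the bound \eqref{op1} follows with the stated constant. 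Translation invariance and solidity for the discrete mixed Orlicz spaces are available from the constructions preceding Proposition \ref{Propn:Cond} and Remark \ref{Rem:MatrixClasses}.

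**Main obstacle.** The genuine difficulty is step (iii): one must justify that the mixed quasi-Orlicz sequence space $\ell^{\Phi_1,\dots,\Phi_N}_{\dbar,(\omega)}$ behaves well under the Schur-test/interpolation argument that \cite[Theorem 2.3]{Toft16} runs for $\ell^{p,q}$—i.e. that the operator norm on it is controlled by the norms on $\ell^{r_0}$ and $\ell^\infty$ simultaneously in each of the $N$ "directions" of the mixed norm, applied iteratively from the innermost coordinate outward. The cleanest way to handle this is to run the argument one coordinate at a time: fix all but the innermost variable, apply the known scalar result to the innermost Orlicz sum (using that $\Phi_1$ is $r_0$-Young, hence $\ell^{\Phi_1}$ lies between $\ell^{r_0}$ and $\ell^\infty$), obtain a bound for the inner Orlicz norm of the output by the same norm of the input times $\nm A{\mathbb U^{\infty,r_0}}$ (here the twist $T$ and the mixed structure of $\mathbb U^{\infty,r_0}$ play together correctly because $\dbar\in I_{2d,N}^0$ respects the $\ep\zz d\times\ep\zz d$ splitting), then proceed to the next coordinate using solidity and monotonicity of the remaining Orlicz norms. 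Summing the $r_0$-powers via the quasi-triangle inequality at the end produces exactly \eqref{op1}. I would expect the write-up to lean on Proposition \ref{Propn:Cond} to collapse repeated indices where $\Phi_j=\Phi_{j+1}$, and on Remark \ref{Rem:MatrixClasses} to translate $A\in\mathbb U^{\infty,r_0}$ into the summability estimate on $b$; the remaining estimates are the routine changes of variables flagged in Remark \ref{Rem:MatrixClasses} and the solidity/translation-invariance facts inherited from Lemma \ref{T} and the definition of $\ell^{\Phi_1,\dots,\Phi_N}_{\dbar,(\omega)}$.
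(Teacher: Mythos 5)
Your argument breaks down at the very first reduction, because you misread the matrix norm. By Definition \ref{Def:MatrixClasses} and Definition \ref{d1}, in $\nm {A}{\mathbb{U}^{\infty ,r_0}_{(\omega )}}=\nm {T(a\cdot \omega )}{\ell ^{\infty ,r_0}}$ the exponent $\infty$ acts on the \emph{first} (inner) variable $j$ and the exponent $r_0$ on the second variable $k$; that is, $\nm {A}{\mathbb{U}^{\infty ,r_0}_{(\omega )}}=\nm {h_\omega}{\ell ^{r_0}}$ with $h_\omega (k)=\sup _j |a(j,j-k)\omega (j,j-k)|$. You instead take the hypothesis to mean $\sup _j \nm {a(j,j-\cdo )\omega (j,j-\cdo )}{\ell ^{r_0}}<\infty$ and declare this "the only quantitative input we need". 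That condition is strictly weaker and does not imply the theorem: with $\omega =\omega _1=\omega _2=1$ and $\Phi _j(t)=t$ (so the space is $\ell ^1$, within the theorem's scope), take $a(j,0)=c_j$ and $a(j,k)=0$ for $k\neq 0$ with $c$ bounded but not summable; your row condition holds, yet $A\delta _0=c\notin \ell ^1$, while the genuine $\mathbb U ^{\infty ,r_0}$ norm of this $A$ is $\nm c{\ell ^{r_0}}=\infty$, so there is no contradiction with the theorem, only with your reading. The irony is that at the decisive moment you write that the quantity $\sum _k(\sup _j\beta (j,k))^{r_0}$ is "exactly what $A\in \mathbb U^{\infty ,r_0}$ does \emph{not} give" --- it is exactly what it \emph{does} give. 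With the correct reading, your abandoned first attempt is the paper's proof: one dominates $|(Af)(j)|\,\omega _2(j)\le (h_\omega *f_{\omega _1})(j)$ with $f_{\omega _1}=|f\cdot \omega _1|$, using \eqref{Eq:WeightOpCondDiscrete}, and then applies Lemma \ref{Lemma:QuasBanachSeqConv} (translation invariance of $\ell ^{\Phi _1,\dots ,\Phi _N}_{\dbar}$ and the $\ell ^{r_0}$-convolution bound) to get \eqref{op1}, first for finitely supported nonnegative data and then in general via Beppo Levi after splitting $A$ and $f$ into positive and negative real and imaginary parts.

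Independently of the misreading, your fallback route (iii) is a genuine gap: mixed weighted quasi-Orlicz sequence spaces $\ell ^{\Phi _1,\dots ,\Phi _N}_{\dbar ,(\omega )}$ are not rearrangement invariant, and there is no theorem you can cite asserting that such spaces are interpolation spaces for the quasi-Banach couple $(\ell ^{r_0},\ell ^\infty )$, i.e.\ that boundedness at the two endpoints transfers to them; Calder{\'o}n--Mityagin type results do not apply here, and for $r_0<1$ even the rearrangement-invariant situation is delicate. Your step (ii) (the $\ell ^{r_0}\to \ell ^{r_0}$ bound) also fails under your weakened hypothesis, by the same single-column example. So the proposal as written does not establish \eqref{op1}; the repair is not an interpolation argument but simply the correct identification $\nm A{\mathbb{U}^{\infty ,r_0}_{(\omega )}}=\nm {h_\omega}{\ell ^{r_0}}$ followed by the convolution estimate of Lemma \ref{Lemma:QuasBanachSeqConv}.
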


\par


\par

We need the following lemma for the proof of Theorem
\ref{Thm:Aop}. We omit the proof since the result is
a consequence of \cite[Lemma 3.1]{ToUsNaOz}.

\par

\begin{lemma}\label{Lemma:QuasBanachSeqConv}
Let $\Lambda \subseteq \rr d$
be a lattice, $\mascB \subseteq \ell _0'(\Lambda )$ be a quasi-Banach
of order $r_0\in (0,1]$, with quasi-norm $\nm \cdo {\mascB}$.
If
$$
\nm {f(\cdo -j)}{\mascB} = \nm f{\mascB},
\qquad f\in \mascB ,\ j\in \Lambda ,
$$
then the discrete convolution map $(f,g)\mapsto f*_\Lambda g$ from
$\ell ^{r_0}(\Lambda )\times \ell ^{r_0}(\Lambda )$ to $\ell ^{r_0}(\Lambda )$
extends uniquely to a continuous map from
$\mascB \times \ell ^{r_0}(\Lambda )$ to
$\mascB$, and
$$
\nm {f*g}{\mascB} \le \nm f{\mascB}\nm g{\ell ^{r_0} (\Lambda )},
\qquad f\in \mascB ,\ g\in \ell ^{r_0}(\Lambda ).
$$
\end{lemma}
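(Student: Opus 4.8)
The plan is to reduce the statement to the case of a finitely supported second argument, where the convolution is a finite linear combination of translates of $f$, and then to pass to general $g\in \ell ^{r_0}(\Lambda )$ by a completion argument inside the complete space $\mascB$. The structural facts I would use are the $r_0$-triangle inequality carried by the quasi-norm of $\mascB$ (which is what ``quasi-Banach of order $r_0$'' encodes, after an equivalent Aoki--Rolewicz renorming if needed), the homogeneity of the quasi-norm, the translation hypothesis $\nm {f(\cdo -j)}{\mascB}=\nm f{\mascB}$, and completeness.

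First I would note that for $g\in \ell _0(\Lambda )$ and $f\in \mascB$ one has
$$
f*_\Lambda g=\sum _{j\in \supp g}g(j)\,f(\cdo -j),
$$
a finite sum of translates, hence an element of $\mascB$. Applying the $r_0$-triangle inequality, homogeneity and the translation hypothesis termwise yields
$$
\nm {f*_\Lambda g}{\mascB}^{r_0}
\le
\sum _{j\in \supp g}|g(j)|^{r_0}\,\nm {f(\cdo -j)}{\mascB}^{r_0}
=
\nm f{\mascB}^{r_0}\sum _j|g(j)|^{r_0}
=
\left (\nm f{\mascB}\,\nm g{\ell ^{r_0}(\Lambda )}\right )^{r_0},
$$
which is exactly the asserted bound on the subspace $\mascB \times \ell _0(\Lambda )$.

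To upgrade this to arbitrary $g\in \ell ^{r_0}(\Lambda )$, I would fix $f\in \mascB$, exhaust $\Lambda$ by finite sets $F_1\subseteq F_2\subseteq \cdots$, and consider the partial sums $S_n=\sum _{j\in F_n}g(j)\,f(\cdo -j)$. The same estimate applied to tails gives
$$
\nm {S_m-S_n}{\mascB}^{r_0}
\le
\nm f{\mascB}^{r_0}\sum _{j\in F_m\setminus F_n}|g(j)|^{r_0},
$$
whose right-hand side tends to $0$, so $\{ S_n\}$ is Cauchy and converges in $\mascB$ by completeness; I define $f*_\Lambda g$ as its limit. The limit is independent of the exhaustion (any two interleave into a common one), the bound persists under the limit, and since the map is linear and bounded in $g$ and agrees with the convolution on $\mascB \times \ell _0(\Lambda )$, which is dense because $\ell _0(\Lambda )$ is dense in $\ell ^{r_0}(\Lambda )$, it is the unique continuous extension; boundedness in $f$ is immediate from the same inequality.

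The point needing the most care is the \emph{consistency} of this $\mascB$-limit with the ordinary coordinatewise convolution, i.e.\ that the formal sequence $f*_\Lambda g\in \ell _0'(\Lambda )$ really has entries $\sum _jg(j)f(k-j)$; this is what makes the construction an extension of the convolution on $\ell ^{r_0}(\Lambda )\times \ell ^{r_0}(\Lambda )$ rather than a purely abstract completion. I would obtain it from the continuity of the coordinate functionals $\mascB \ni h\mapsto h(k)$ together with the absolute summability of $\sum _jg(j)f(k-j)$, both of which hold for the solid quasi-Banach sequence spaces to which the lemma is applied: $S_n\to f*_\Lambda g$ in $\mascB$ then forces $S_n(k)\to (f*_\Lambda g)(k)$ for each $k$, while $S_n(k)=\sum _{j\in F_n}g(j)f(k-j)$ converges to $\sum _jg(j)f(k-j)$, so the two coincide.
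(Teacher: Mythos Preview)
Your argument is correct and is the standard one: apply the $r_0$-subadditivity of $\nm{\,\cdot\,}{\mascB}^{r_0}$ to finite sums of translates, then extend by density of $\ell _0(\Lambda )$ in $\ell ^{r_0}(\Lambda )$ together with completeness of $\mascB$. The paper does not supply its own proof of this lemma; it simply omits the argument and refers to \cite[Lemma~3.1]{ToUsNaOz}, so there is nothing to compare against in the paper itself, but your approach is exactly what that cited lemma records.

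One small remark on your final paragraph. The consistency check you flag (that the abstract $\mascB$-limit of partial sums agrees with the coordinatewise convolution) indeed requires something beyond the bare hypotheses of the lemma as stated, since the lemma does not explicitly assume solidity of $\mascB$ or continuity of point evaluations $h\mapsto h(k)$. You are right that these hold in all the applications in the paper (to $\ell ^{\Phi _1,\dots ,\Phi _N}_{(\omega)}$), and in fact the way the lemma is used in the proof of Theorem~\ref{Thm:Aop} only needs the quasi-norm inequality $\nm{h_\omega * f_{\omega_1}}{\ell^{\Phi_1,\Phi_2}}\le \nm{h_\omega}{\ell^{r_0}}\nm{f_{\omega_1}}{\ell^{\Phi_1,\Phi_2}}$ for non-negative sequences, where the pointwise convolution is already well defined. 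So your caveat is well placed but harmless for the paper's purposes.
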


\par
%
%
%

\par

\begin{proof}[Proof of Theorem \ref{Thm:Aop}]
We only prove the result in the case $N=2$.
For general $N$, the result follows by similar
arguments, and is left for the reader.
Let $f\in \ell^{\Phi _1,\Phi _2}_{(\omega_1)}
(\ep \zz {2d})$ and set $g=Af$.

\par

First we consider the case
when $A\in \mathbb{U}_0 (\ep \zz {4d})$
and let
\begin{equation*}
a_{\omega}(j,k)=|a(j,k)\omega (j,k)|,\quad
f_{\omega_1}(k)=|f(k)\omega_1(k)|
\end{equation*}
and
$$
g_{\omega_2}(j)=|g(j)\omega_2(j)|.
$$
We get
\begin{multline*}
g_{\omega _2}(j)=|Af(j)\omega _2(j)|
\\[1ex]
\leq
\sum \limits_{k\in \ep \zz {2d}}
|a(j,k)|f(k)\omega_1(k)\omega (j,k)|
\\[1ex]
= \sum \limits _{k\in \ep \zz {2d}}
|a_{\omega}(j,j-k)f_{\omega_1}(j-k)|
\\[1ex]
\le
\sum \limits _{k\in \ep \zz {2d}} 
h_{\omega}(k)f_{\omega_1}(j-k)
=(h_{\omega}*f_{\omega _1})(j),
\end{multline*}
where $h_{\omega}(k)=\sup \limits
_{j\in \ep \zz {2d}} a_{\omega}(j,j-k)$.

\par

By Lemma \ref{Lemma:QuasBanachSeqConv} we get
\begin{multline*}
\Vert Af\Vert_{\ell^{\Phi _1,\Phi _2}_{(\omega_2)}} =
\Vert g_{\omega_2}\Vert_{\ell^{\Phi _1,\Phi _2}}
\leq
\Vert h_{\omega}*f_{\omega_1}\Vert_{\ell^{\Phi _1,\Phi _2}}
\leq \Vert h_{\omega}\Vert_{\ell^{r_0}}
\Vert f_{\omega _1}\Vert_{\ell^{\Phi _1,\Phi _2}}
\\[1ex]
= \Vert A\Vert_{\mathbb{U}^{\infty ,r_0}_{(\omega )}}
\Vert f\Vert_{\ell^{\Phi _1,\Phi _2}_{(\omega_1)}},
\end{multline*}
and the result follows in this case.

\par

For general $A\in \mathbb{U}^{\infty ,r_0}
_{(\omega )}(\ep \zz {4d})$
we decompose $A$ and $f$ into
\begin{equation}\label{Eq:DecompRealImagPosNeg}
A=A_1 -A_2 +i(A_3 -A_4 )\quad \text{and}
\quad f=f_1 -f_2 +i(f_3 -f_4 ),
\end{equation}
where $A_j$ and $f_k$ only have non-negative
entries,
chosen as small as possible.
By Beppo-Levi's theorem and
the estimates above it follows
that $A_j f_k$ is uniquely defined
as an element in
$\ell^{\Phi _1\Phi _2}_{(\omega_2)}(\ep \zz {2d})$.
It also
follows from these estimates that \eqref{op1} holds.
\end{proof}

\par

\begin{rem}\label{Rem:Aop}
Let
$$
A=(a(j,k))_{j,k\in \ep \zz {2d}}\in
\mathbb U_0'( \ep \zz {4d})
\quad \text{and}\quad
f = \{ f(j)\} _{j\in \ep \zz {2d}}
\in \ell _0'(\ep \zz {2d}).
$$
Then $A_n$ in
\eqref{Eq:DecompRealImagPosNeg} are given by
$$
A_n=(a_n(j,k))_{j,k\in \ep \zz {2d}}
$$
where
\begin{alignat*}{2}
a_1(j,k) &= \max (\operatorname{Re}(a(j,k)),0),
\quad
a_2(j,k) &= \min (\operatorname{Re}(a(j,k)),0),
\\[1ex]
a_3(j,k) &= \max (\operatorname{Im}(a(j,k)),0),
\quad
a_4(j,k) &= \min (\operatorname{Im}(a(j,k)),0),
\end{alignat*}
and $f_n = \{ f_n(j)\} _{j\in \ep \zz {2d}}$,
are obtained in the same way after
each $a_n(j,k)$ and $a(j,k)$ are replaced by
$f_n(j)$ and $f(j)$, respectively.
\end{rem}

\par

Before we discuss continuity properties of pseudo-differential operators
on Orlicz modulation spaces, we have the following result concerning
operator classes
$$
\sets {\op _A(a)}{a\in M^{\Phi _1,\Phi _2}_{(\omega )}(\rr {2d})}
$$
of continuous operators from $\Sigma _1(\rr d)$ to $\Sigma _1'(\rr d)$.
Here recall \cite[Proposition 1.9]{Toft19} for analogous relations
for pseudo-differential operators
with symbols in (ordinary) modulation spaces.

\par

\begin{prop}\label{Prop:CalculiTransferOrlModSp}
Let $N\ge 1$ be an integer, $\dbar
\in I_{4d,N}^0$, $A\in \GL (d,\mathbf R)$,
$\Phi _1,\dots ,\Phi _N$ be quasi-Young
functions, $\omega \in \mascP _E(\rr {4d})$
and let
$$
\omega _A(x,\xi ,\eta ,y) =
\omega (x-Ay,\xi -A^*\eta ,\eta ,y).
$$
Then the following is true:
\begin{enumerate}
\item the map $e^{i\scal {AD_\xi}{D_x}}$ from
$M^\infty _{(\omega )}(\rr {2d})$
to $M^\infty _{(\omega _A)}(\rr {2d})$ restricts
to a homeomorphism from
$M^{\Phi _1,\dots ,\Phi _N}
_{\dbar , (\omega )}(\rr {2d})$
to $M^{\Phi _1,\dots ,\Phi _N}
_{\dbar ,(\omega _A)}(\rr {2d})$;

\vrum

\item the set
$$
\sets {\op _A(a)}
{a\in M^{\Phi _1,\dots ,\Phi _N}
_{\dbar ,(\omega _A)}(\rr {2d})}
$$
of operators from $\Sigma _1(\rr d)$ to
$\Sigma _1'(\rr d)$ is independent of
$A\in \GL (d,\mathbf R)$.
\end{enumerate}
\end{prop}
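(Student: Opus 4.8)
The plan is to prove part (1) first and then deduce part (2) as a direct consequence together with Proposition \ref{Prop:CalculiTransfer} (the $A$-calculus transfer formula). For part (1), I would start from the known mapping property on the $M^\infty$ scale: the operator $e^{i\scal{AD_\xi}{D_x}}$ is a homeomorphism from $M^\infty_{(\omega)}(\rr{2d})$ to $M^\infty_{(\omega_A)}(\rr{2d})$, which is exactly the statement recalled from \cite[Proposition 1.9]{Toft19}. Since every $M^{\Phi_1,\dots,\Phi_N}_{\dbar,(\omega)}(\rr{2d})$ embeds continuously into $M^\infty_{(1/v)}(\rr{2d})$ for a suitable submultiplicative $v$ (as in the proof of Proposition \ref{subsets}), the operator is already well-defined on these spaces, and what remains is the quasi-norm estimate in both directions.

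The key computational step is to realize $e^{i\scal{AD_\xi}{D_x}}$ on the short-time Fourier transform side. Writing $b = e^{i\scal{AD_\xi}{D_x}}a$ and choosing a Gaussian-type window $\phi$ on $\rr{2d}$ which is mapped to another admissible window under the metaplectic/linear change of variables induced by the symplectic matrix associated to $e^{i\scal{AD_\xi}{D_x}}$, one obtains a pointwise identity of the form
\begin{equation*}
V_\phi b(X,\Xi) = c\cdot V_\psi a\bigl(T_A(X,\Xi)\bigr),
\end{equation*}
where $T_A$ is an explicit invertible linear map on $\rr{4d}$ whose action on the phase-space variable $(x,\xi,\eta,y)$ is precisely the one defining $\omega_A$ from $\omega$, and $\psi$ is a fixed window in $\Sigma_1(\rr{2d})$. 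The change of variables $T_A$ preserves the splitting $\dbar\in I^0_{4d,N}$ (this is where the hypothesis $\dbar\in I^0_{4d,N}$ rather than merely $I_{4d,N}$ is used: the relevant block structure of $T_A$ must respect the coordinate grouping $d_1+\cdots+d_k = 2d$). Combining this identity with the definition \eqref{MNE} of the mixed Orlicz modulation quasi-norm and the translation/dilation invariance properties from Lemma \ref{T}, one gets $\nm{b}{M^{\Phi_1,\dots,\Phi_N}_{\dbar,(\omega_A)}} \asymp \nm{a}{M^{\Phi_1,\dots,\Phi_N}_{\dbar,(\omega)}}$, and the reverse direction follows since $e^{i\scal{AD_\xi}{D_x}}$ is invertible with inverse of the same type (replace $A$ by $-A$). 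I expect the main obstacle to be bookkeeping: verifying that the linear change of variables $T_A$ coming from conjugating the STFT by $e^{i\scal{AD_\xi}{D_x}}$ indeed acts on $\omega$ in the stated way $\omega\mapsto\omega_A$, and that it is compatible with the order of the iterated Orlicz norms in Definition of $L^{\Phi_1,\dots,\Phi_N}_{\dbar,(\omega)}$; this is routine but must be done carefully, and it is the reason the hypothesis is $\dbar\in I^0_{4d,N}$.

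For part (2), given $a_1 \in M^{\Phi_1,\dots,\Phi_N}_{\dbar,(\omega_{A_1})}(\rr{2d})$ and $A_1,A_2\in\GL(d,\mathbf R)$, Proposition \ref{Prop:CalculiTransfer} produces the unique $a_2$ with $\op_{A_1}(a_1)=\op_{A_2}(a_2)$, characterized by $e^{i\scal{A_2D_\xi}{D_x}}a_2 = e^{i\scal{A_1D_\xi}{D_x}}a_1$. By part (1) applied with $A=A_1$, the right-hand side lies in $M^{\Phi_1,\dots,\Phi_N}_{\dbar,(\omega_{A_1}\circ(\text{shift by }A_1))}=M^{\Phi_1,\dots,\Phi_N}_{\dbar,(\omega)}$; then applying part (1) with $A=A_2$ in the reverse direction shows $a_2\in M^{\Phi_1,\dots,\Phi_N}_{\dbar,(\omega_{A_2})}(\rr{2d})$, with equivalent quasi-norms. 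Hence the symbol classes $M^{\Phi_1,\dots,\Phi_N}_{\dbar,(\omega_A)}$ are carried bijectively onto one another as $A$ varies, so the corresponding operator classes $\sets{\op_A(a)}{a\in M^{\Phi_1,\dots,\Phi_N}_{\dbar,(\omega_A)}(\rr{2d})}$ coincide. I would close by noting that all the operators involved are a priori continuous $\Sigma_1(\rr d)\to\Sigma_1'(\rr d)$ by Proposition \ref{subsets} together with the standard continuity of $\op_A$ on Gelfand--Shilov spaces recalled in Subsection \ref{subsec1.6}, so no separate density argument is needed.
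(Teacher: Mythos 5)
Your proposal follows essentially the same route as the paper: part (2) is reduced to part (1) via Proposition \ref{Prop:CalculiTransfer}, and part (1) rests on the STFT intertwining identity for $e^{i\scal {AD_\xi}{D_x}}$, which expresses the transformed symbol's STFT as the STFT of $a$ composed with the shear $(x,\xi ,\eta ,y)\mapsto (x-Ay,\xi -A^*\eta ,\eta ,y)$, so the weighted iterated Orlicz quasi-norms agree because the inner norms over the $(x,\xi )$-block are translation invariant --- exactly where $\dbar \in I_{4d,N}^0$ enters, as you correctly note. One small correction: the equality of inner norms comes from plain translation invariance of the (unweighted) iterated norm applied to the product $|V_\phi a|\,\omega$, since the weight is transported along with the function, and not from Lemma \ref{T}, whose moderate-weight factor $v(Ay,A^*\eta )$ would depend on the outer variables and would not close the estimate.
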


\par

\begin{proof}
We only prove the result in the case $N=2$.
For general $N$, the result follows by similar
arguments, and is left for the reader.

\par

It suffices to prove (1) in view of Proposition 
\ref{Prop:CalculiTransfer}.

\par

Let $a\in M^{\Phi _1,\Phi _2} _{(\omega )}(\rr {2d})$,
$\phi \in \Sigma _1(\rr d)$, $\psi = e^{i\scal {AD_\xi}{D_x}}\phi$
and $b=e^{i\scal {AD_\xi}{D_x}}a$. Then it follows from
Theorem 3.1 and (3.1) in \cite{AbCaTo} that $\psi \in \Sigma _1(\rr d)$ and
$$
|V_\psi b(x,\xi ,\eta ,y)\omega _A(x,\xi ,\eta ,y)|
=
|V_\phi a(x-Ay,\xi -A^*\eta ,\eta ,y)\omega (x-Ay,\xi -A^*\eta ,\eta ,y)| .
$$
By applying the $L^{\Phi _1}$ quasi-norm with respect to the $(x,\xi )$ variables gives
\begin{multline*}
\nm {V_\psi b(\cdo ,\eta ,y)\omega _A(\cdo ,\eta ,y)}{L^{\Phi _1}}
\\[1ex]
=
\nm {V_\phi a(\cdo -(Ay,A^*\eta ),\eta ,y)\omega (\cdo -(Ay,A^*\eta ),\eta ,y)}{L^{\Phi _1}}
\\[1ex]
=
\nm {V_\phi a(\cdo ,\eta ,y)\omega (\cdo ,\eta ,y)}{L^{\Phi _1}},
\end{multline*}
and applying the $L^{\Phi _2}$ quasi-norm with respect to the $(y,\eta )$ on
the last equality gives
$$
\nm {V_\psi b\cdot \omega _A}{L^{\Phi _1,\Phi _2}}
=
\nm {V_\phi a\cdot \omega}{L^{\Phi _1,\Phi _2}}.
$$
This gives
$$
\nm b{M^{\Phi _1,\Phi _2}_{(\omega _A)}} = \nm a{M^{\Phi _1,\Phi _2}_{(\omega )}},
$$
and the result follows.
\end{proof}

\par

We have now the following continuity result for 
pseudo-differential operators acting on Orlicz 
modulation spaces. Here and in what follows, $A^*$ 
denotes the transpose of the matrix $A$,
and the involved weight functions should
satisfy
\begin{equation}\label{Eq:PseudoContWeightCond}
\frac {\omega _2(x,\xi )}{\omega _1(y,\eta )}
\lesssim
\omega (x+A(y-x),
\eta +A^*(\xi -\eta ), \xi -\eta ,y-x),
\end{equation}

\par

\begin{thm}\label{Thm:PseudoCont}
Let $A\in M(d,\mathbf{R})$, $\Phi _1,\Phi _2$ be quasi Young functions of order
$r_0 \in(0,1]$, $\omega \in \mascP_E (\rr {4d})$ and $\omega_1,\omega_2
\in \mascP _E(\rr {2d})$ be such that
\eqref{Eq:PseudoContWeightCond} holds,
and let $a\in M^{\infty ,r_0}_{(\omega )}(\rr {2d})$. Then
$\Op_A(a)$ from $\Sigma _1(\rr d)$ to $\Sigma _1'(\rr d)$
is uniquely extendable to a continuous map from
$M^{\Phi _1,\Phi _2}_{(\omega_1)}(\rd )$ to
$M^{\Phi _1,\Phi _2}_{(\omega_2)}(\rd )$, and
\begin{equation}\label{Eq:PseudoCont}
\nm {\Op_A(a)}{M^{\Phi _1,\Phi _2}_{(\omega_1)}(\rd )\to M^{\Phi _1,\Phi _2}_{(\omega_2)}(\rd )}
\lesssim
\nm a{M^{\infty ,r_0}_{(\omega )}}.
\end{equation}
\end{thm}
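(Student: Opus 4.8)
The plan is to reduce the continuity statement to the discrete matrix-operator estimate in Theorem~\ref{Thm:Aop} via the Gabor-analysis machinery from Section~\ref{sec1}, following the standard scheme: conjugate $\Op_A(a)$ by analysis/synthesis operators, identify the resulting matrix as an element of $\mathbb U^{\infty,r_0}_{(\omega)}$, and then quote Theorem~\ref{Thm:Aop}. First I would use Proposition~\ref{Prop:CalculiTransferOrlModSp} (together with Proposition~\ref{Prop:CalculiTransfer}) to reduce to the Kohn--Nirenberg case $A=0$, so it suffices to treat $a(x,D)=\Op_0(a)$; the weight condition \eqref{Eq:PseudoContWeightCond} becomes the corresponding condition for $A=0$ after the substitution, and $M^{\infty,r_0}_{(\omega)}$ is carried to an analogous space by part~(1) of Proposition~\ref{Prop:CalculiTransferOrlModSp}.

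Next I would set up the Gabor frame machinery. Pick a submultiplicative $v$ dominating all the moderateness weights, choose $\phi,\psi\in M^{r_0}_{(v)}(\rd)\setminus 0$ and $\ep>0$ as in Lemma~\ref{Lemma:GoodFrames0} so that $\{e^{i\scal\cdo\kappa}\phi(\cdo-k)\}$ and $\{e^{i\scal\cdo\kappa}\psi(\cdo-k)\}$ are dual frames, and write $f\in M^{\Phi_1,\Phi_2}_{(\omega_1)}(\rd)$ via the Gabor expansion from Proposition~\ref{Prop:Gabor}. Then $\Op_0(a)f$ is obtained by applying $\Op_0(a)$ to each $e^{i\scal\cdo\kappa}\phi(\cdo-k)$ and re-expanding in the dual frame; this produces a matrix $A_a=(a(j,\iota;k,\kappa))$ on $\ep\zz{2d}\times\ep\zz{2d}$ whose entries are, up to harmless factors, values of $V_\Psi a$ at points determined by the lattice, where $\Psi$ is built from $\phi,\psi$ as in Lemma~\ref{Lemma:GoodFrames} (the $A$-Wigner/STFT link \eqref{tWigpseudolink} makes this explicit). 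The crucial bookkeeping step is that, because the window $\Psi$ lies in $M^r_{(v)}(\rr{2d})$ for every $r>0$ and $a\in M^{\infty,r_0}_{(\omega)}(\rr{2d})$, sampling $V_\Psi a$ on the lattice and applying the twisting map $T$ of Definition~\ref{Def:MatrixClasses} places $A_a$ in $\mathbb U^{\infty,r_0}_{(\omega')}(\ep\zz{4d})$ with $\nm{A_a}{\mathbb U^{\infty,r_0}_{(\omega')}}\lesssim\nm a{M^{\infty,r_0}_{(\omega)}}$, where $\omega'$ is exactly the weight appearing on the right of \eqref{Eq:PseudoContWeightCond} restricted to the lattice, so that \eqref{Eq:WeightOpCondDiscrete} holds with $\omega_1,\omega_2$.

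Having identified $A_a$, I would invoke Theorem~\ref{Thm:Aop} (in the case $N=2$, $\dbar=(d,d)$) to conclude that $A_a$ maps $\ell^{\Phi_1,\Phi_2}_{(\omega_1)}(\ep\zz{2d})$ continuously into $\ell^{\Phi_1,\Phi_2}_{(\omega_2)}(\ep\zz{2d})$ with norm $\lesssim\nm a{M^{\infty,r_0}_{(\omega)}}$. Then combining the continuity of the analysis operator $C_\psi^{\ep,\Lambda}$ from $M^{\Phi_1,\Phi_2}_{(\omega_1)}(\rd)$ to $\ell^{\Phi_1,\Phi_2}_{(\omega_1)}$ and of the synthesis operator $D_\phi^{\ep,\Lambda}$ from $\ell^{\Phi_1,\Phi_2}_{(\omega_2)}$ to $M^{\Phi_1,\Phi_2}_{(\omega_2)}(\rd)$ (Proposition~\ref{Prop:AnalysisSynthOp}), one gets $\Op_0(a)=D_\phi^{\ep,\Lambda}\circ A_a\circ C_\psi^{\ep,\Lambda}$ on the dense subspace and the norm estimate \eqref{Eq:PseudoCont} follows by composition; uniqueness of the extension comes from density of $\Sigma_1(\rd)$ in $M^{\Phi_1,\Phi_2}_{(\omega_1)}(\rd)$ together with the weak$^*$-continuity statement in Proposition~\ref{Prop:Gabor}. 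I expect the main obstacle to be the second step: verifying precisely that the Gabor matrix of $\Op_0(a)$ lands in $\mathbb U^{\infty,r_0}_{(\omega')}$ with the right weight, i.e.\ tracking how \eqref{Eq:PseudoContWeightCond} emerges from the change of variables in the twisting map $T$ and the moderateness of $\omega$, and checking that the window $\Psi$ has enough decay (membership in $M^r_{(v)}$ for small $r$) for the $\ell^{r_0}$-type bound on $\sup_j|(T(V_\Psi a\cdot\omega'))(j,\cdot)|$ to hold; this is where the hypothesis $a\in M^{\infty,r_0}$ rather than merely $M^{\infty,\infty}$ is used.
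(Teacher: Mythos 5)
Your main argument follows the paper's own route: reduce to $A=0$ via Proposition \ref{Prop:CalculiTransferOrlModSp}, factor $\op (a)=D_{\phi _1}^{\ep ,\Lambda}\circ A_a\circ C_{\phi _2}^{\ep ,\Lambda}$ with the Gabor matrix $A_a$ whose entries are samples of a short-time Fourier transform of $a$ (this is exactly Lemma \ref{Lemma:PseudoCont}), identify $\nm {A_a}{\mathbb U^{\infty ,r_0}_{(\omega ')}}\asymp \nm a{M^{\infty ,r_0}_{(\omega )}}$, apply Theorem \ref{Thm:Aop} to $A_a$, and compose with the analysis and synthesis bounds of Proposition \ref{Prop:AnalysisSynthOp}. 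That part is correct and essentially identical to the paper's proof.

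The gap is in your uniqueness argument. You appeal to density of $\Sigma _1(\rr d)$ in $M^{\Phi _1,\Phi _2}_{(\omega _1)}(\rr d)$, but this density fails in general: for instance when one of the $\Phi _j$ vanishes near the origin the corresponding Orlicz component is of $\ell ^\infty$ type, and Proposition \ref{Prop:Gabor} itself only guarantees unconditional convergence of Gabor expansions when $\mascS (\rr {2d})$ is dense in $L^{\Phi _1,\Phi _2}(\rr {2d})$, falling back to weak$^*$ convergence in $M^\infty _{(\omega )}$ otherwise; invoking that weak$^*$ statement does not by itself pin down a unique continuous extension on the whole Orlicz modulation space. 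The paper avoids this by a different mechanism: by Proposition \ref{Prop:OrliczModInvariance} one has $M^{\Phi _1,\Phi _2}_{(\omega _j)}(\rr d)\subseteq M^{\infty}_{(\omega _j)}(\rr d)$, and $\op (a)$ with $a\in M^{\infty ,r_0}_{(\omega )}$ is already uniquely determined as a continuous map from $M^{\infty}_{(\omega _1)}(\rr d)$ to $M^{\infty}_{(\omega _2)}(\rr d)$ by \cite[Theorem 3.1]{Toft16}; the Orlicz extension is then forced to agree with this canonical one, which gives uniqueness without any density claim. You should replace your density argument by this embedding argument (or prove an analogous weak$^*$ identification on all of $M^{\Phi _1,\Phi _2}_{(\omega _1)}$), since as written the uniqueness step does not go through for all admissible $\Phi _1,\Phi _2$.
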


\par

The previous result can be generalized as in the following.

\par

\begin{thm}\label{Thm:PseudoContExt}
Let $A\in M(d,\mathbf{R})$, 
$N$ be a positive integer, $\dbar \in I_{d,N}$, $\Phi _j$,
$j=1,\dots ,N$, be quasi-Young functions
of order
$r_0 \in(0,1]$, $\omega \in \mascP_E (\rr {4d})$ and $\omega_1,\omega_2
\in \mascP _E(\rr {2d})$ be such that
\eqref{Eq:PseudoContWeightCond} holds,
and let $a\in M^{\infty ,r_0}_{(\omega )}(\rr {2d})$. Then
$\Op_A(a)$ from $\Sigma _1(\rr d)$ to $\Sigma _1'(\rr d)$
is uniquely extendable to a continuous map from
$M^{\Phi _1,\dots ,\Phi _N}_{\dbar, (\omega_1)}(\rd )$ to
$M^{\Phi _1,\dots ,\Phi _N}_{\dbar ,(\omega_2)}(\rd )$, and
\begin{equation}\label{Eq:PseudoContExt}
\nm {\Op_A(a)}{M^{\Phi _1,\dots ,\Phi _N}_{\dbar, (\omega_1)}(\rd )
\to M^{\Phi _1,\dots ,\Phi _N}_{\dbar, (\omega_2)}(\rd )}
\lesssim
\nm a{M^{\infty ,r_0}_{(\omega )}}.
\end{equation}
\end{thm}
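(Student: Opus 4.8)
The plan is to run the same scheme as for Theorem \ref{Thm:PseudoCont}: transport the problem, via Gabor analysis, to the level of coefficient sequences, where $\Op_A(a)$ acts as an infinite matrix lying in one of the classes $\mathbb U^{\infty ,r_0}_{(\omega ')}$, and then invoke the mapping properties of such matrices on discrete Orlicz spaces. The only place the parameter $N$ enters is the discrete space $\ell ^{\Phi _1,\dots ,\Phi _N}_{\dbar ,(\omega )}$, and this is already covered, for arbitrary $N$, by Theorem \ref{Thm:Aop} together with the extension of Gabor theory recorded in Remark \ref{Rem:GaborExt}. Consequently the passage from the two-function statement of Theorem \ref{Thm:PseudoCont} to the present one costs essentially nothing, and I would present it accordingly. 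First I would fix windows: let $v\in \mascP _E(\rr {4d})$ be submultiplicative and large enough to dominate $\omega _1$, $\omega _2$ and $\omega$; by Lemma \ref{Lemma:GoodFrames0} and Remark \ref{Remark:GoodFrames0} pick $\ep >0$ and $\phi ,\psi \in M^{r_0}_{(v)}(\rr d)\setminus 0$ generating dual Gabor frames over $\ep \zz d$; by Lemma \ref{Lemma:GoodFrames} pick $\phi _1,\phi _2\in \Sigma _1(\rr d)$ and form $\varphi (x,\xi )=\phi _1(x)\overline{\widehat{\phi}_{2}(\xi )}e^{-i\scal x\xi }$, whose canonical dual window lies in $M^r_{(v)}(\rr {2d})$ for every $r>0$, so that symbols may be analyzed by $V_\varphi$ over $\ep \zz {4d}$. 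A general $A\in M(d,\mathbf R)$ is kept throughout; as in Theorem \ref{Thm:PseudoCont} it enters only through an $A$-dependent linear change of variables in the Gabor matrix and through the weight inequality \eqref{Eq:PseudoContWeightCond}, and one may if desired first normalize $A$ using Proposition \ref{Prop:CalculiTransferOrlModSp}.

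Next I would expand and identify the matrix. For $f\in M^{\Phi _1,\dots ,\Phi _N}_{\dbar ,(\omega _1)}(\rr d)$, Remark \ref{Rem:GaborExt} gives $f=\sum _{k,\kappa }(V_\psi f)(k,\kappa )e^{i\scal \cdot \kappa }\phi (\cdot -k)$; applying $\Op_A(a)$ and re-expanding each $\Op_A(a)[e^{i\scal \cdot \kappa }\phi (\cdot -k)]$ in the same frame exhibits $\Op_A(a)$, on Gabor coefficients, as an infinite matrix $A_a$ over $\ep \zz {2d}\times \ep \zz {2d}$. The substantive estimate, carried out exactly as in the case $N=2$ (compare the modulation-space computation in \cite{Toft16}), is the almost-diagonalization bound: the entries of $A_a$, weighted by a weight $\omega '$ on $\ep \zz {4d}$ manufactured from $\omega$, are dominated by a sampled majorant of $|V_\varphi a|$ under the above $A$-dependent linear map, and $\omega '$ is arranged so that \eqref{Eq:PseudoContWeightCond} forces \eqref{Eq:WeightOpCondDiscrete} for the pair $(\omega _1,\omega _2)$. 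Combined with $a\in M^{\infty ,r_0}_{(\omega )}(\rr {2d})$ and Proposition \ref{Prop:Gabor} applied on $\rr {2d}$, this yields $A_a\in \mathbb U^{\infty ,r_0}_{(\omega ')}(\ep \zz {4d})$ with $\nm {A_a}{\mathbb U^{\infty ,r_0}_{(\omega ')}}\lesssim \nm a{M^{\infty ,r_0}_{(\omega )}}$.

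Then I would conclude by a sandwich argument. Theorem \ref{Thm:Aop} applies directly to $A_a$ with the given $\dbar$, giving a continuous map $\ell ^{\Phi _1,\dots ,\Phi _N}_{\dbar ,(\omega _1)}(\ep \zz {2d})\to \ell ^{\Phi _1,\dots ,\Phi _N}_{\dbar ,(\omega _2)}(\ep \zz {2d})$ and the bound \eqref{op1}. The analysis operator $C_\psi ^{\ep ,\Lambda }$ and the synthesis operator $D_\phi ^{\ep ,\Lambda }$ are continuous on the relevant $N$-fold spaces by the version of Proposition \ref{Prop:AnalysisSynthOp} contained in Remark \ref{Rem:GaborExt}, and $\Op_A(a)$ coincides with $D_\phi ^{\ep ,\Lambda }\circ A_a\circ C_\psi ^{\ep ,\Lambda }$ on $M^{\Phi _1,\dots ,\Phi _N}_{\dbar ,(\omega _1)}(\rr d)$; chaining the three estimates gives \eqref{Eq:PseudoContExt}. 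Uniqueness of the extension follows from density of $\Sigma _1(\rr d)$, or from the weak-$*$ description, exactly as in Proposition \ref{Prop:Gabor}.

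The hard part is the almost-diagonalization bound for the Gabor matrix of $\Op_A(a)$: writing its entries in terms of samples of $|V_\varphi a|$ through the correct $A$-dependent affine substitution, and checking that the induced discrete weight $\omega '$ inherits \eqref{Eq:WeightOpCondDiscrete} from \eqref{Eq:PseudoContWeightCond}. This is precisely the work already required for Theorem \ref{Thm:PseudoCont}, and it is entirely insensitive to $N$; once it is in place, Theorem \ref{Thm:Aop} and Remark \ref{Rem:GaborExt} absorb the general $N$ automatically, so I would in fact phrase the proof of Theorem \ref{Thm:PseudoContExt} as a brief reduction to the argument establishing Theorem \ref{Thm:PseudoCont}.
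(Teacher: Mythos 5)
Your proposal is correct and follows essentially the paper's own route: the paper proves Theorem \ref{Thm:PseudoCont} by normalizing $A$ via Proposition \ref{Prop:CalculiTransferOrlModSp}, factorizing $\op (a)=D_{\phi _1}^{\ep ,\Lambda}\circ A_a\circ C_{\phi _2}^{\ep ,\Lambda}$ through the Gabor matrix of Lemma \ref{Lemma:PseudoCont}, and chaining Proposition \ref{Prop:AnalysisSynthOp} with Theorem \ref{Thm:Aop}, and it explicitly leaves the general-$N$ case to the same argument, which is exactly the reduction you describe. The only small divergence is the uniqueness step: rather than appealing to density of $\Sigma _1(\rr d)$ (which may fail), the paper embeds $M^{\Phi _1,\dots ,\Phi _N}_{\dbar ,(\omega _j)}(\rr d)$ into $M^{\infty}_{(\omega _j)}(\rr d)$ and invokes the unique extension of $\op (a)$ at the $M^\infty$ level from \cite[Theorem 3.1]{Toft16}, which is the safer version of your weak$^*$ fallback.
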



\par

We only prove Theorem \ref{Thm:PseudoCont}. Theorem
\ref{Thm:PseudoContExt} follows by similar arguments and
is left for the reader.

\par

We need some preparations for the proof of Theorem \ref{Thm:PseudoCont}.
First we have the following extension of \cite[Lemma 3.3]{Toft16} to
the case of Orlicz modulation spaces. 

\par

\begin{lemma}\label{Lemma:PseudoCont}
Let $\Lambda$, $\phi _1$, $\phi _2$, $\fy$, $\psi$ and $\ep >0$ be as in Lemma
\ref{Lemma:GoodFrames}. Also let $v\in \mascP _E (\rr {4d})$, $\Phi _1$,
$a\in M^\infty _{(1/v)}(\rr {2d})$,
\begin{multline*}
c_0(\mabfj ,\mabfk) \equiv (V_\psi a)(j,\kappa ,\iota -\kappa ,k-j)e^{i\scal {k-j}\kappa},
\\[1ex]
\text{where}
\quad \mabfj =(j,\iota )\in \ep \Lambda ^2 ,\ \mabfk = (k,\kappa )\in \ep \Lambda ^2.
\end{multline*}
and let $A_a$ be the matrix $A_a=(c_0(\mabfj ,\mabfk ))
_{\mabfj ,\mabfk \in \ep \Lambda ^2}$.
Then the following is true:
\begin{enumerate}
\item if $\Phi _1$, $\Phi _2$ are quasi-Young functions and
$\omega ,\omega _0\in \mascP _E(\rr {4d})$ satisfy
\begin{equation}\label{omega0omegaRel}
\omega (x,\xi ,y,\eta )\asymp \omega _0(x,\eta ,\xi -\eta ,y-x),
\end{equation}
then $a\in M^{\Phi _1,\Phi _2}_{(\omega _0)}(\rr {2d})$, if and only if
$A_a\in \mathbb U^{\Phi _1,\Phi _2}_{(\omega )}
(\ep (\Lambda ^2\times \Lambda ^2) )$, and then
$$
\nm a{M^{\Phi _1,\Phi _2}_{(\omega _0)}}\asymp
\nm {A_a}{\mathbb U^{\Phi _1,\Phi _2}_{(\omega )}(\ep (\Lambda ^2\times \Lambda ^2))}
\text ;
$$

\vrum

\item $\op (a)$ as map from $\Sigma _1(\rr d)$ to $\Sigma _1'(\rr d)$ is given
by
\begin{equation}\label{OpaFactorization}
\op (a) = D_{\phi _1}^{\ep ,\Lambda} \circ A_a \circ C_{\phi _2}^{\ep ,\Lambda}.
\end{equation}
\end{enumerate}
\end{lemma}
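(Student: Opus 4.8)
The plan is to reduce everything to the Banach-space Gabor machinery already established, namely Lemma \ref{Lemma:GoodFrames}, Proposition \ref{Prop:Gabor}, Proposition \ref{Prop:AnalysisSynthOp} and the identification in Definition \ref{Def:MatrixClasses}, together with the link \eqref{tWigpseudolink} between $\op (a)$ and Wigner distributions. First I would treat (1). The starting point is the standard Gabor identity for the short-time Fourier transform of a Wigner distribution: with $\varphi (x,\xi)=\phi _1(x)\overline{\widehat\phi _2(\xi)}e^{-i\scal x\xi}$ as in Lemma \ref{Lemma:GoodFrames}, one has an explicit formula expressing $V_\varphi (W_{g,f})$ in terms of $V_{\phi _2}f$ and $V_{\phi _1}g$ — this is precisely the computation underlying \cite[Lemma 3.3]{Toft16}. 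Combining it with \eqref{tWigpseudolink} yields, after the change of variables hidden in the definition of $c_0(\mabfj,\mabfk)$, that the matrix entries $c_0(\mabfj,\mabfk)$ are exactly the samples of $V_\varphi a$ on the lattice $\ep(\Lambda^2\times\Lambda^2)$, twisted by the phase $e^{i\scal{k-j}\kappa}$ and with the argument permutation $(x,\xi,y,\eta)\mapsto(x,\eta,\xi-\eta,y-x)$. This permutation is exactly the one relating $\omega$ and $\omega_0$ in \eqref{omega0omegaRel}, so the weighted sampled sequence of $V_\varphi a$ with weight $\omega_0$ coincides, up to the variable permutation built into the map $T$ of Definition \ref{Def:MatrixClasses}, with $T(c_0\cdot\omega)$. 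Then Remark \ref{Rem:GaborExt} (the $M^{\Phi_1,\dots,\Phi_N}$-version of Proposition \ref{Prop:Gabor}, applied with $N=2$ and the splitting $\dbar\in I_{4d,2}^0$) gives
\begin{equation*}
\nm a{M^{\Phi _1,\Phi _2}_{(\omega _0)}}\asymp
\nm {\{(V_\varphi a)(\mabfj,\mabfk)\}}{\ell ^{\Phi _1,\Phi _2}_{\dbar,(\omega _0)}}
\asymp
\nm {T(c_0\cdot\omega)}{\ell ^{\Phi _1,\Phi _2}}
=
\nm {A_a}{\mathbb U^{\Phi _1,\Phi _2}_{(\omega )}},
\end{equation*}
where the middle equivalence absorbs the unimodular phase factor $e^{i\scal{k-j}\kappa}$ (which does not affect any Orlicz quasi-norm, these depending only on $|c_0|$) and the permutation of lattice coordinates. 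The hypothesis $a\in M^\infty_{(1/v)}(\rr{2d})$ guarantees that $V_\psi a$ is well-defined and polynomially (in fact $v$-)bounded, so all the sampled sequences make sense as elements of $\ell_0'$.

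For part (2), the plan is to unwind the definitions of the analysis, synthesis and Gabor frame operators in Subsection \ref{subsec1.5}. Since $\{\varphi(\cdot-j,\cdot-\iota)e^{i(\scal\cdot\kappa+\scal k\cdot)}\}$ and $\{\psi(\cdot-j,\cdot-\iota)e^{i(\scal\cdot\kappa+\scal k\cdot)}\}$ are dual frames (Lemma \ref{Lemma:GoodFrames}), the Gabor expansion of $W_{g,f}$ reads $W_{g,f}=\sum (V_\psi W_{g,f})(\cdot)\,\varphi(\cdot-\cdot)e^{i(\cdots)}$; pairing this with $a$ via \eqref{tWigpseudolink}, substituting the Gabor expansions $f=D_{\phi_2}^{\ep,\Lambda}C_{\phi_2}^{\ep,\Lambda}f$ and $g=D_{\phi_1}^{\ep,\Lambda}C_{\phi_1}^{\ep,\Lambda}g$ (again by duality of the one-dimensional frames), and bookkeeping the indices, one finds that $(\op(a)f,g)_{L^2}$ equals the bilinear form of $D_{\phi_1}^{\ep,\Lambda}\circ A_a\circ C_{\phi_2}^{\ep,\Lambda}$ applied to $f$ and tested against $g$. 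Since this holds for all $f,g\in\Sigma_1(\rr d)$, \eqref{OpaFactorization} follows. The computation is essentially identical to the Banach case in \cite[Lemma 3.3]{Toft16}; the only point needing care is that all sums converge in the appropriate weak$^*$ sense, which is covered because $a\in M^\infty_{(1/v)}$ and $f,g\in\Sigma_1$.

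The main obstacle I anticipate is entirely bookkeeping rather than conceptual: carefully matching the four-fold change of variables $(x,\xi,y,\eta)\mapsto(x,\eta,\xi-\eta,y-x)$ against (i) the definition of the twisting map $T$ in Definition \ref{Def:MatrixClasses}, (ii) the particular index assignment $\mabfj=(j,\iota)$, $\mabfk=(k,\kappa)$ with the shift $(j,\iota-\kappa,k-j)$ inside $V_\psi a$, and (iii) the weight relation \eqref{omega0omegaRel}, so that one really lands on $T(c_0\cdot\omega)$ and not some cousin of it. A secondary subtlety is ensuring that the passage from the scalar Gabor-frame statement for $W_{g,f}$ to the operator factorization \eqref{OpaFactorization} respects the correct order of synthesis-then-matrix-then-analysis; getting $\phi_1$ on the synthesis side and $\phi_2$ on the analysis side (rather than the reverse) is dictated by which variable in $W_{g,f}$ carries $f$ and which carries $g$, and must be tracked consistently with \eqref{tWigpseudolink}. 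Once these identifications are pinned down, everything else is an application of the cited results.
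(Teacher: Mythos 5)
Your treatment of (1) is essentially the paper's own argument: identify $\nm {A_a}{\mathbb U^{\Phi _1,\Phi _2}_{(\omega )}}$ with the weighted $\ell ^{\Phi _1,\Phi _2}_{(\omega _0)}$ quasi-norm of the sampled short-time Fourier transform of $a$ via the change of variables built into $T$ together with \eqref{omega0omegaRel}, and then apply the Gabor norm equivalence of Proposition \ref{Prop:Gabor} (note only that $c_0$ is defined through $V_\psi a$, not $V_\varphi a$, though this is harmless since both windows yield equivalent norms). For (2) the paper simply quotes assertion (2) of \cite[Lemma 3.3]{Toft16} without proof, and your sketch is a reconstruction of that same cited computation, so overall your proposal follows the paper's route.
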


\par

\begin{proof}
We have
$$
|c_0(\mabfj ,\mabfj -\mabfk )| = | (V_\Psi a) (j,\iota -\kappa ,\kappa ,-k)|.
$$
Hence, Proposition \ref{Prop:Gabor} gives
$$
\nm {A_a}{\mathbb U^{\Phi _1,\Phi _2}_{(\omega )}(\ep (\Lambda ^2 \times \Lambda ^2 ))}
=
\nm {V_\Psi a}{\ell ^{\Phi _1,\Phi _2}_{(\omega _0)}(\ep (\Lambda ^2 \times \Lambda ^2 ))}
\asymp
\nm a{M^{\Phi _1,\Phi _2}_{(\omega _0)}},
$$
and (1) follows.

\par

The assertion (2) is the same as assertion (2) in
\cite[Lemma 3.3]{Toft16}.
The proof is therefore omitted.
\end{proof}

\par

\begin{proof}[Proof of Theorem \ref{Thm:PseudoCont}]
By Proposition \ref{Prop:CalculiTransferOrlModSp}
we may assume that $A=0$.

\par

Let $a$, $A_a$, $\phi _1$ and $\phi _2$ be the same as in
Proposition \ref{Prop:AnalysisSynthOp} and Lemma
\ref{Lemma:PseudoCont}. Then by
%
Proposition \ref{Prop:AnalysisSynthOp}, Theorem \ref{Thm:Aop}
and Lemma \ref{Lemma:PseudoCont} we get
$$
\nm {\Op(a)}{M^{\Phi _1,\Phi _2}_{(\omega_1)}\rightarrow M^{\Phi _1,\Phi _2}_{(\omega_2)}}
\lesssim J_1\cdot J_2\cdot J_3,
$$
where
\begin{align}
J_1 &= \nm {D_{\phi _1}}{\ell^{\Phi _1,\Phi _2}_{(\omega _2)}
\rightarrow
M^{\Phi _1,\Phi _2}_{(\omega _2)}}<\infty ,
\label{Eq:ContSynthOpOrMod}
\\[1ex]
J_2 &= \nm {A_a}{\ell^{\Phi _1,\Phi _2}_{(\omega _2)}
\rightarrow
\ell^{\Phi _1,\Phi _2}_{(\omega _2)}}<\infty
\intertext[0ex]{and}
J_3 &= \nm {C_{\phi _2}}{M^{\Phi _1,\Phi _2}_{(\omega _1)}
\rightarrow
M^{\Phi _1,\Phi _2}_{(\omega _1)}}<\infty.
\label{Eq:ContAnalOpOrMod}
\end{align}
This gives the asserted continuity. The uniqueness
follows from the facts that
$$
M^{\Phi _1,\Phi _2}_{(\omega _j)} (\rd )\subseteq
M^{\infty}_{(\omega _j)} (\rd ),
$$
in view of Proposition \ref{Prop:OrliczModInvariance} and that $\Op (a)$ is
uniquely defined as a continuous operator from
$M^{\infty}_{(\omega_1)}(\rd )$ to $M^{\infty}_{(\omega_2)}(\rd )$, in view of
\cite[Theorem 3.1]{Toft16}.
\end{proof}

\par

We have also the following.

\par 

\begin{thm}\label{Thm:PseudoCont2}
Let $A\in M(d,\mathbf{R})$, $\Phi _0$ be a Young function,
$\Phi _0^*$ the complementary Young function of $\Phi _0$,
$\Phi$ be a quasi-Young function such that
\begin{equation}\label{Eq:StrictConvave}
  \lim _{t\to 0+} \frac t{\Phi (t)}
\end{equation}
is finite and let $\omega \in \mascP_E (\rr {4d})$
and $\omega_1,\omega_2 \in \mascP _E(\rr {2d})$
be such that \eqref{Eq:PseudoContWeightCond}
holds. Then the following is true:
\begin{enumerate}
\item if $a \in M^{\Phi _0} _{(\omega )}(\rr {2d})$, then
$\Op _A(a)$ from $M^1_{(\omega _1)}(\rr d)$ to
$M^\infty _{(\omega _2)}(\rr d)$
is extendable to a continuous map from $M^{\Phi _0^* }_{(\omega _1)}(\rd)$
to $M^{\Phi _0}_{(\omega _2)}(\rd)$ and
\begin{equation*}
\nm {\Op _A(a)}
{M^{\Phi _0^* }_{(\omega _1)}(\rd )\to M^{\Phi _0}_{(\omega _2)}(\rd )}
\lesssim
\nm a{M^{\Phi _0}_{(\omega )}(\rr{2d})}\text ;
\end{equation*}

\vrum

\item if $a \in M^{\Phi} _{(\omega )}(\rr {2d})$, then
$\Op _A(a)$ from $M^1_{(\omega _1)}(\rr d)$ to
$M^\infty _{(\omega _2)}(\rr d)$
is uniquely extendable to a continuous map from
$M^{\infty}_{(\omega _1)}(\rd)$
to $M^{\Phi}_{(\omega _2)}(\rd)$, and
\begin{equation*}
\nm {\Op _A(a)}
{M^{\infty}_{(\omega _1)}(\rd )\to M^{\Phi}_{(\omega _2)}(\rd )}
\lesssim
\nm a{M^{\Phi}_{(\omega )}(\rr{2d})}.
\end{equation*}
\end{enumerate}
\end{thm}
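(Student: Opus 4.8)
The plan is to mimic the proof of Theorem~\ref{Thm:PseudoCont} but replace the continuity result for matrix operators on $\ell^\infty$-type spaces (Theorem~\ref{Thm:Aop}) with the corresponding duality-based continuity statement for matrix operators between discrete Orlicz spaces and their complementary spaces. In view of Proposition~\ref{Prop:CalculiTransferOrlModSp} we may assume $A=0$, so throughout we work with $\Op(a) = \Op_0(a)$ and use the Gabor factorization \eqref{OpaFactorization} from Lemma~\ref{Lemma:PseudoCont}, namely $\Op(a) = D_{\phi_1}^{\ep,\Lambda}\circ A_a\circ C_{\phi_2}^{\ep,\Lambda}$, where $A_a$ is the matrix built from the short-time Fourier transform of $a$. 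For part (1), the hypothesis $a\in M^{\Phi_0}_{(\omega)}(\rr{2d})$ translates via Lemma~\ref{Lemma:PseudoCont}(1) into $A_a\in\mathbb U^{\Phi_0,\Phi_0}_{(\omega')}$ for a suitably reshuffled weight $\omega'$ satisfying \eqref{omega0omegaRel}, and by Remark~\ref{Rem:MatrixClasses}(2) and the reduction $\ell^{\Phi_0,\Phi_0}_{(\omega)} = \ell^{\Phi_0}_{(\omega)}$ (Proposition~\ref{Propn:Cond}), this says the associated kernel sequence lies in $\ell^{\Phi_0}$ in the twisted variables.

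The heart of part (1) is then the following matrix-operator claim: if the kernel of $A_a$ lies in the discrete Orlicz space $\ell^{\Phi_0}$ (in the twisted variable, with the weight distributed as in \eqref{Eq:WeightOpCondDiscrete}), then $A_a$ maps $\ell^{\Phi_0^*}_{(\omega_1)}(\ep\zz{2d})$ continuously into $\ell^{\Phi_0}_{(\omega_2)}(\ep\zz{2d})$ with norm controlled by $\nm{A_a}{\mathbb U^{\Phi_0,\Phi_0}_{(\omega')}}$. I would prove this by the Hölder-type inequality for Orlicz spaces: $(A_af)(j) = \sum_k a(j,k)f(k)$, and after absorbing weights using \eqref{Eq:WeightOpCondDiscrete} one estimates $|(A_af)(j)\omega_2(j)|\le \sum_k |a_{\omega'}(j,j-k)|\,|f_{\omega_1}(j-k)|$; applying the generalized Hölder inequality $\sum_\ell |u(\ell)v(\ell)|\lesssim \nm u{\ell^{\Phi_0}}\nm v{\ell^{\Phi_0^*}}$ in the inner sum gives $|(A_af)(j)\omega_2(j)|\lesssim \nm{h_j}{\ell^{\Phi_0}}\nm{f_{\omega_1}}{\ell^{\Phi_0^*}}$ where $h_j(k) = a_{\omega'}(j,j-k)$, and then taking the $\ell^{\Phi_0}$ norm in $j$ and using that $\nm{h_j}{\ell^{\Phi_0}}$ is itself an entry of the twisted kernel whose $\ell^{\Phi_0}$-norm over $j$ reconstitutes $\nm{A_a}{\mathbb U^{\Phi_0,\Phi_0}_{(\omega')}}$. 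Combined with the continuity of $C_{\phi_2}^{\ep,\Lambda}\colon M^{\Phi_0^*}_{(\omega_1)}\to\ell^{\Phi_0^*}_{(\omega_1)}$ and $D_{\phi_1}^{\ep,\Lambda}\colon\ell^{\Phi_0}_{(\omega_2)}\to M^{\Phi_0}_{(\omega_2)}$ from Proposition~\ref{Prop:AnalysisSynthOp} (applied with $N=1$), the chain of three bounds gives the asserted continuity of $\Op_A(a)$; uniqueness of the extension follows since $M^{\Phi_0^*}_{(\omega_1)}\subseteq M^\infty_{(\omega_1)}$ and $M^1_{(\omega_1)}\subseteq M^{\Phi_0^*}_{(\omega_1)}$ is dense, and $\Op_A(a)$ is already determined as a map $M^1_{(\omega_1)}\to M^\infty_{(\omega_2)}$.

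For part (2), the condition that $\lim_{t\to 0+} t/\Phi(t)$ is finite is precisely the hypothesis $t\lesssim\Phi(t)$ near the origin, which by Proposition~\ref{Prop:OrliczModInvariance} gives the embedding $M^\Phi_{(\omega)}\subseteq M^\infty_{(\omega)}$-type comparison at the sequence level, equivalently $\ell^1\hookrightarrow\ell^\Phi$ near origin in the relevant direction; the upshot is that $A_a\in\mathbb U^{\infty,r_0}$-style estimates are replaced by the statement that $A_a$ maps $\ell^\infty_{(\omega_1)}$ into $\ell^\Phi_{(\omega_2)}$. Concretely, if $a\in M^\Phi_{(\omega)}(\rr{2d})$ then $A_a$ has kernel in $\ell^\Phi$ (twisted variable), and since $\nm{f}{\ell^\infty}$ bounds each $|f(k)|$, one gets $|(A_af)(j)\omega_2(j)|\lesssim \bigl(\sum_k|a_{\omega'}(j,j-k)|\bigr)\nm{f_{\omega_1}}{\ell^\infty}$; the key point is that $\sum_k|a_{\omega'}(j,j-k)|$ is dominated, uniformly in $j$, by a quantity whose $\ell^\Phi$-norm in $j$ is $\asymp\nm a{M^\Phi_{(\omega)}}$ — here one uses $\ell^1\subseteq\ell^\Phi$ from the condition on $\Phi$ together with the $\Phi$-norm bound on the full twisted kernel. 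Then composing with $C_{\phi_2}^{\ep,\Lambda}\colon M^\infty_{(\omega_1)}\to\ell^\infty_{(\omega_1)}$ and $D_{\phi_1}^{\ep,\Lambda}\colon\ell^\Phi_{(\omega_2)}\to M^\Phi_{(\omega_2)}$ closes the argument; uniqueness follows as before since $M^1_{(\omega_1)}$ is dense in $M^\infty_{(\omega_1)}$ with respect to the appropriate topology. The main obstacle I anticipate is getting the weight bookkeeping right: one must verify that the weight $\omega$ in \eqref{Eq:PseudoContWeightCond}, after passing through the Wigner/STFT change of variables and the twisting map $T$, yields a discrete weight satisfying exactly \eqref{Eq:WeightOpCondDiscrete} and \eqref{omega0omegaRel}, so that Lemma~\ref{Lemma:PseudoCont}(1) and the Orlicz-Hölder matrix estimate can be chained without loss; this is routine but must be done carefully, and it is the only place where the proof genuinely differs in bookkeeping from that of Theorem~\ref{Thm:PseudoCont}.
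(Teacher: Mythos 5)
Your continuity argument follows the paper's own route: reduce to $A=0$ via Proposition \ref{Prop:CalculiTransferOrlModSp}, use the Gabor factorization \eqref{OpaFactorization}, and prove the two matrix estimates row by row — for (1) the Orlicz--H{\"o}lder inequality giving $\nm {A_0f}{\ell ^{\Phi _0}_{(\omega _2)}}\lesssim \nm {A_0}{\mathbb U^{\Phi _0,\Phi _0}_{(\omega )}}\nm f{\ell ^{\Phi _0^*}_{(\omega _1)}}$, and for (2) the bound $\nm {a(j,\cdo )\omega (j,\cdo )}{\ell ^1}\lesssim \nm {a(j,\cdo )\omega (j,\cdo )}{\ell ^{\Phi}}$ followed by the $\ell ^\Phi$ quasi-norm in $j$ — together with Proposition \ref{Prop:AnalysisSynthOp}, Proposition \ref{Propn:Cond} and Remark \ref{Rem:MatrixClasses}. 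This is exactly what the paper does (first for non-negative entries, then splitting into positive and negative real and imaginary parts and applying Beppo Levi, a step you should not omit). One wording slip: the embedding you need in (2) is $\ell ^{\Phi}\hookrightarrow \ell ^1$, i.e. $\nm \cdo {\ell ^1}\lesssim \nm \cdo {\ell ^{\Phi}}$, which is what \eqref{Eq:StrictConvave} and Proposition \ref{Prop:OrliczModInvariance} give; you state the inclusion in the reverse direction ($\ell ^1\subseteq \ell ^{\Phi}$), although your displayed estimate uses it the correct way.

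The genuine gap is your treatment of uniqueness in (2). You appeal to density of $M^1_{(\omega _1)}(\rr d)$ in $M^\infty _{(\omega _1)}(\rr d)$ ``in the appropriate topology'', but $M^1_{(\omega _1)}$ is not norm dense in $M^\infty _{(\omega _1)}$, and weak$^*$ density alone does not pin down a bounded extension unless you also prove that your extension is weak$^*$ continuous, which your construction does not address. The paper instead exploits \eqref{Eq:StrictConvave} a second time: since $t\lesssim \Phi (t)$ near the origin, Proposition \ref{Prop:OrliczModInvariance} gives $M^{\Phi}_{(\omega )}(\rr {2d})\subseteq M^1_{(\omega )}(\rr {2d})$, so for $f\in M^\infty _{(\omega _1)}(\rr d)$ the element $\op (a)f$ is already uniquely determined in $M^1_{(\omega _2)}(\rr d)$ by \cite[Theorem 3.1]{Toft16}, and hence uniquely determined as an element of $M^{\Phi}_{(\omega _2)}(\rr d)$; the constructed extension must coincide with it. You need this (or an equivalent) argument to obtain the uniqueness asserted in (2). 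For the same reason your claim of uniqueness in (1), based on density of $M^1_{(\omega _1)}$ in $M^{\Phi _0^*}_{(\omega _1)}$, is unjustified (consider $\Phi _0(t)=t$, for which $M^{\Phi _0^*}_{(\omega _1)}$ is essentially $M^\infty _{(\omega _1)}$); this does not damage the stated result, since the theorem only claims extendability in (1), but it should not be asserted.
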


\par

\begin{proof}
By Proposition \ref{Prop:CalculiTransferOrlModSp}
we may assume that $A=0$.

\par

Let $\Lambda \subseteq \rr d$ be a lattice,
$A_0=(a(j,k))_{j,k\in \Lambda}
\in \mathbb U ^{\Phi _0,\Phi _0} _{(\omega )}(\Lambda \times \Lambda )$
and $f\in \ell ^{\Phi _0^*}_{(\omega _1)}(\Lambda )$ be such that $a(j,k)\ge 0$ and
$f(j)\ge 0$ for every $j,k\in \Lambda$.
We have
$$
0\le (A_0f)(j)\omega _2(j)
=
(a(j,\cdo ),f)\omega _2(j)
\lesssim
\nm {a(j,\cdo )\omega (j,\cdo )}{\ell ^{\Phi _0}}\nm {f\cdot \omega _1}{\ell ^{\Phi _0^*}}.
$$
By applying the $\ell ^{\Phi _0}$ norm
and using Remark \ref{Rem:MatrixClasses} we get
\begin{equation}\label{Eq:MatrixOpEst4}
\nm {A_0f}{\ell ^{\Phi _0}_{(\omega _2)}}
\lesssim
\nm a{\ell ^{\Phi _0,\Phi _0}_{(\omega )}}\nm f{\ell ^{\Phi _0^*}_{(\omega _1)}}
\asymp
\nm {A_0}{\mathbb U^{\Phi _0,\Phi _0}_{(\omega )}}\nm f{\ell ^{\Phi _0^*}_{(\omega _1)}},
\end{equation}
which implies that $A_0f$ makes sense
as an element in
$\ell ^{\Phi _0}_{(\omega _2)}(\Lambda )$.

\par

For general $A_0=(a(j,k))_{j,k\in \Lambda} \in
\mathbb U ^{\Phi _0,\Phi _0}
_{(\omega )}(\Lambda \times \Lambda )$
and $f\in \ell ^{\Phi _0^*}_{(\omega _1)}
(\Lambda )$, we define $A_0f$ in similar
ways as in the proof of Theorem \ref{Thm:Aop},
by splitting up $A_0$ and $f$
into positive and negative parts of their real
and imaginary parts. By
\eqref{Eq:MatrixOpEst4} we obtain
\begin{equation}\label{Eq:MatrixBoundednessPhi0Phi0*}
\nm {A_0}{\ell ^{\Phi _0^*}_{(\omega _1)}(\Lambda )
\to
\ell ^{\Phi _0}_{(\omega _2)}(\Lambda )}
\lesssim
\nm {A_0}{\mathbb U^{\Phi _0,\Phi _0}_{(\omega )}}.
\end{equation}

\par

Now let $a\in M^{\Phi _0}_{(\omega )}(\rr {2d})$
and $f\in M^{\Phi _0^*}_{(\omega _1)}(\rr d)$.
Then we define $\op (a)f$
by \eqref{OpaFactorization}. The asserted
continuity in (1) now follows
from Proposition \ref{Propn:Cond},
\eqref{Eq:ContSynthOpOrMod},
\eqref{Eq:ContAnalOpOrMod}
and \eqref{Eq:MatrixBoundednessPhi0Phi0*}.

\par

Next let $\Phi$ be as in (2) and let
$A_0=(a(j,k))_{j,k\in \Lambda}
\in \mathbb U ^{\Phi ,\Phi }_{(\omega )}
(\Lambda \times \Lambda )$
and $f\in \ell ^{\infty}_{(\omega _1)}(\Lambda )$
be such that $a(j,k)\ge 0$ and
$f(j)\ge 0$ for every $j,k\in \Lambda$. Then
\begin{multline*}
0\le (A_0f)(j)\omega _2(j)
=
(a(j,\cdo ),f)\omega _2(j)
\\[1ex]
\lesssim
\nm {a(j,\cdo )\omega (j,\cdo )}{\ell ^1}\nm {f\cdot \omega _1}{\ell ^\infty}.
\lesssim
\nm {a(j,\cdo )\omega (j,\cdo )}{\ell ^{\Phi}}
\nm f{\ell ^\infty _{(\omega _1)}},
\end{multline*}
where the last inequality follows from
Proposition \ref{Prop:OrliczModInvariance}
and
\eqref{Eq:StrictConvave}. By applying the
$\ell ^\Phi$ quasi-norm and splitting up
general $A_0=(a(j,k))_{j,k\in \Lambda}
\in \mathbb U ^{\Phi ,\Phi }_{(\omega )}
(\Lambda \times \Lambda )$
and $f\in \ell ^{\infty}_{(\omega _1)}(\Lambda )$
into positive and negative
real and imaginary parts, we obtain
\begin{equation}
\label{Eq:MatrixBoundednessInftyPhi}
\nm {A_0}{\ell ^\infty _{(\omega _1)}(\Lambda )
\to \ell ^{\Phi}_{(\omega _2)}(\Lambda )}
\lesssim
\nm {A_0}{\mathbb U^{\Phi ,\Phi}_{(\omega )}}.
\end{equation}
The asserted continuity in (2) now follows by combining
Proposition \ref{Propn:Cond},
\eqref{Eq:ContSynthOpOrMod},
\eqref{Eq:ContAnalOpOrMod}
and \eqref{Eq:MatrixBoundednessInftyPhi}.

\par

The asserted uniqueness follows from the
fact that if
$a\in M^\Phi _{(\omega )}(\rr {2d})$,
then $a\in M^1_{(\omega )}(\rr {2d})$
in view of Proposition
\ref{Prop:OrliczModInvariance}
and \eqref{Eq:StrictConvave}. Hence, if
$f\in M^\infty _{(\omega _1)}(\rr d)$,
then $\op (a)f$ is uniquely defined as an element in
$M^1_{(\omega _2)}(\rr d)$ (see e.{\,}g. \cite[Theorem 3.1]{Toft16}).
This in turn implies that
$\op (a)f$ is uniquely defined as an element in
$M^\Phi _{(\omega _2)}(\rr d)$, and the result follows.
\end{proof}

\par

\section{Symbol product estimates on Orlicz modulation spaces}
\label{sec4}

\par

In this section we show that if $\omega _j$ are suitable weights, $j=0,1,2$,
$\Phi _1,\Phi _2$ are quasi-Young functions of order $r_0\in (0,1]$,
$a_1\in M^{\Phi _1,\Phi _2}_{(\omega _1)}$ and
$a_2\in M^{\infty ,r_0}_{(\omega _2)}$, then
$\op _A(a_1)\circ \op _A(a_2)$ equals $\op _A(b)$ for some
$a_1\in M^{\Phi _1,\Phi _2}_{(\omega _0)}$.

\par

An essential condition on the weight functions is
\begin{equation} \label{Eq:weight1}
\omega _0(T _A(Z,X))\lesssim \omega _1(T _A(Y,X))
\omega _2(T _A(Z,Y)),\quad X, Y, Z \in(\rdd),
\end{equation}
where
\begin{multline} \label{Eq:weight2}
T _A(X,Y)=(y+A(x-y),\xi+A^{*}(\eta -\xi),\eta-\xi ,x-y),
\\[1ex]
X=(x,\xi) \in\rdd, Y =(y,\eta)\in \rdd.
\end{multline}

\par

\begin{thm}\label{Thm:main}
Let $A\in M(d,\mathbf{R})$ and suppose that $\omega _k \in \mascP
_E(\rr{4d})$, $k=0,1,2$,
satisfy \eqref{Eq:weight1} and \eqref{Eq:weight2}.
Let $\Phi _1,\Phi _2$ be quasi Young functions of order $r_0 \in(0,1]$.
Then the map $(a_1 ,a_2)\mapsto a_1\wpr _Aa_2$ from
$\Sigma _1(\rr {2d})\times \Sigma _1(\rr {2d})$ to $\Sigma _1(\rr {2d})$
is uniquely extendable to a continuous map from
$M^{\Phi _1,\Phi _2}_{(\omega _1)}(\rr {2d}) \times
M^{\infty ,r_0}_{(\omega _2)}(\rr {2d})$ to
$M^{\Phi _1 ,\Phi _2}_{(\omega )}(\rr {2d})$, and
\begin{equation}\label{Eq:main}
\nm {a_1 \wpr _A a_2}{M^{\Phi _1,\Phi _2}_{(\omega )}}
\lesssim
\nm {a_1}{M^{\Phi _1,\Phi _2}_{(\omega _1)}}
\nm {a_2}{M^{\infty ,r_0}_{(\omega _2)}}.
\end{equation}
\end{thm}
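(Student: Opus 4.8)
The plan is to follow the strategy used for Theorem~\ref{Thm:PseudoCont}: pass to the Gabor side via Lemma~\ref{Lemma:PseudoCont}, where the $A$-product turns into a matrix product, and reduce the whole statement to a product estimate for the matrix classes $\mathbb U^{\Phi _1,\Phi _2}$ and $\mathbb U^{\infty ,r_0}$. First I would reduce to the case $A=0$. Since $\op _A(a_1\wpr _A a_2)=\op _A(a_1)\circ \op _A(a_2)$ and $\op _A(a_j)=\op _0(b_j)$ with $b_j=e^{i\scal {AD_\xi}{D_x}}a_j$, we get $e^{i\scal {AD_\xi}{D_x}}(a_1\wpr _A a_2)=b_1\wpr _0 b_2$. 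By Proposition~\ref{Prop:CalculiTransferOrlModSp}(1), $e^{i\scal {AD_\xi}{D_x}}$ is a homeomorphism between the weighted Orlicz modulation spaces attached to $\omega _k$ and to $(\omega _k)_A$, and \eqref{Eq:weight1}--\eqref{Eq:weight2} are arranged precisely so that $(\omega _0)_A,(\omega _1)_A,(\omega _2)_A$ again satisfy the analogous relations with $A$ replaced by $0$. This reduces everything to the Kohn--Nirenberg product $a_1\wpr _0 a_2$, exactly as in the proof of Theorem~\ref{Thm:PseudoCont}. The uniqueness of the extension then follows from $M^{\Phi _1,\Phi _2}_{(\omega _1)}\subseteq M^\infty _{(\omega _1)}$ and $M^{\infty ,r_0}_{(\omega _2)}\subseteq M^\infty _{(\omega _2)}$ (Proposition~\ref{Prop:OrliczModInvariance}), together with the fact that $\wpr _0$ is already well defined on the ambient $M^\infty$-type spaces.

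Next, fix $\phi _1,\phi _2,\fy ,\psi ,\Lambda ,\ep$ as in Lemmas~\ref{Lemma:GoodFrames} and~\ref{Lemma:PseudoCont}, and attach to $a_1,a_2$ the matrices $A_{a_1},A_{a_2}$ from Lemma~\ref{Lemma:PseudoCont}. By Lemma~\ref{Lemma:PseudoCont}(1), $A_{a_1}\in \mathbb U^{\Phi _1,\Phi _2}_{(\Omega _1)}(\ep (\Lambda ^2\times \Lambda ^2))$ and $A_{a_2}\in \mathbb U^{\infty ,r_0}_{(\Omega _2)}(\ep (\Lambda ^2\times \Lambda ^2))$ with
$$
\nm {A_{a_1}}{\mathbb U^{\Phi _1,\Phi _2}_{(\Omega _1)}}\asymp \nm {a_1}{M^{\Phi _1,\Phi _2}_{(\omega _1)}},
\qquad
\nm {A_{a_2}}{\mathbb U^{\infty ,r_0}_{(\Omega _2)}}\asymp \nm {a_2}{M^{\infty ,r_0}_{(\omega _2)}},
$$
where $\Omega _1,\Omega _2$ are the weights on $\rr {4d}$ obtained from $\omega _1,\omega _2$ through the relation \eqref{omega0omegaRel}. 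Inserting a dual Gabor frame resolution of the identity between $\op _0(a_1)$ and $\op _0(a_2)$ in the factorization \eqref{OpaFactorization}, one sees that the Gabor matrix $A_{a_1\wpr _0 a_2}$ coincides, up to an admissible change of analysis/synthesis windows which does not affect the norm equivalences in Lemma~\ref{Lemma:PseudoCont}(1) and Proposition~\ref{Prop:Gabor}, with the matrix product $A_{a_1}\circ A_{a_2}$. Hence, by Lemma~\ref{Lemma:PseudoCont}(1) once more, \eqref{Eq:main} reduces to the matrix estimate
$$
\nm {A_{a_1}\circ A_{a_2}}{\mathbb U^{\Phi _1,\Phi _2}_{(\Omega _0)}}
\lesssim
\nm {A_{a_1}}{\mathbb U^{\Phi _1,\Phi _2}_{(\Omega _1)}}\,
\nm {A_{a_2}}{\mathbb U^{\infty ,r_0}_{(\Omega _2)}},
$$
where $\Omega _0$ is the weight on $\rr {4d}$ associated with $\omega _0$, and where \eqref{Eq:weight1}--\eqref{Eq:weight2} guarantee that $\Omega _0$ is dominated by the relevant product of translates of $\Omega _1$ and $\Omega _2$ along the matrix product.

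The remaining step is this matrix estimate, which I would prove in the spirit of Theorem~\ref{Thm:Aop}. Writing $(A_{a_1}\circ A_{a_2})(\mabfj ,\mabfk )=\sum _{\mabfl}A_{a_1}(\mabfj ,\mabfl )A_{a_2}(\mabfl ,\mabfk )$, applying the twist $T$ of Definition~\ref{Def:MatrixClasses}, inserting $\Omega _0$ and bounding it by the product of weights supplied by \eqref{Eq:weight1}, a change of summation variable turns the estimate into a bound of the form
$$
|T((A_{a_1}\circ A_{a_2})\cdot \Omega _0)(\mabfj ,\mabfk )|
\lesssim
\sum _{\mabfl}|T(A_{a_1}\cdot \Omega _1)(\mabfj ,\mabfl )|\; h(\mabfk -\mabfl ),
$$
where $h(\mabfl ')=\sup _{\mabfp}|T(A_{a_2}\cdot \Omega _2)(\mabfp ,\mabfl ')|$ satisfies $\nm h{\ell ^{r_0}}=\nm {A_{a_2}}{\mathbb U^{\infty ,r_0}_{(\Omega _2)}}$; this is a discrete convolution in the second (outer) group of lattice variables. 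Applying first the solidity and order-$r_0$ subadditivity of the inner quasi-norm $\ell ^{\Phi _1}$ to pull the $\ell ^{\Phi _1}$-quasi-norm in $\mabfj$ past the convolution, and then Lemma~\ref{Lemma:QuasBanachSeqConv} with $\mascB =\ell ^{\Phi _2}$ (a translation-invariant quasi-Banach space of order $r_0$ by the discrete analogue of Lemma~\ref{T}) in the second group, yields
$$
\nm {A_{a_1}\circ A_{a_2}}{\mathbb U^{\Phi _1,\Phi _2}_{(\Omega _0)}}
\lesssim
\nm h{\ell ^{r_0}}\,\nm {T(A_{a_1}\cdot \Omega _1)}{\ell ^{\Phi _1,\Phi _2}}
=
\nm {A_{a_2}}{\mathbb U^{\infty ,r_0}_{(\Omega _2)}}\,\nm {A_{a_1}}{\mathbb U^{\Phi _1,\Phi _2}_{(\Omega _1)}}.
$$
The main obstacle is precisely this last calculation: keeping track of the weights and lattice variables under the twist $T$ so that the double sum genuinely collapses into ``an $\ell ^{r_0}$-convolution acting on $\ell ^{\Phi _1,\Phi _2}$'' rather than a genuinely two-parameter sum, and justifying the Minkowski-type interchange for the inner quasi-Banach norm $\ell ^{\Phi _1}$ --- this is the matrix-level analogue of the computation in the proof of Theorem~\ref{Thm:Aop}. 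Combining the matrix estimate with Lemma~\ref{Lemma:PseudoCont}(1) gives \eqref{Eq:main} for $A=0$, and undoing the reduction of the first paragraph delivers the general case together with the asserted continuity of $(a_1,a_2)\mapsto a_1\wpr _A a_2$.
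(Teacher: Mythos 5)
Your overall scheme (reduce to $A=0$ via Proposition \ref{Prop:CalculiTransferOrlModSp}, discretize with the Gabor factorization \eqref{OpaFactorization}, and reduce \eqref{Eq:main} to a product estimate between $\mathbb{U}^{\Phi _1,\Phi _2}$- and $\mathbb{U}^{\infty ,r_0}$-matrices, proved by the same twist-and-convolve argument as in Theorem \ref{Thm:Aop}) is exactly the paper's strategy, and your sketch of the matrix product estimate is essentially the paper's Lemma \ref{Lemma1}. The genuine gap is the step where you claim that the Gabor matrix of $a_1\wpr _0a_2$ ``coincides, up to an admissible change of analysis/synthesis windows'' with $A_{a_1}\circ A_{a_2}$. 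If you compose the factorizations \eqref{OpaFactorization}, what you actually get is
\begin{equation*}
\op (a_1)\circ \op (a_2)
=
D_{\phi _1}^{\ep ,\Lambda}\circ \bigl (A_{a_1}\circ C\circ A_{a_2}\bigr )\circ C_{\phi _2}^{\ep ,\Lambda},
\qquad
C=C_{\phi _2}^{\ep ,\Lambda}\circ D_{\phi _1}^{\ep ,\Lambda},
\end{equation*}
and the middle factor $C$ is \emph{not} the identity on the sequence space: for a redundant Gabor frame, $D_{\phi _1}^{\ep ,\Lambda}\circ C_{\phi _2}^{\ep ,\Lambda}=\operatorname{Id}$ on functions, but $C_{\phi _2}^{\ep ,\Lambda}\circ D_{\phi _1}^{\ep ,\Lambda}$ is only a Gram-type projection onto the range of the analysis operator, and no admissible change of windows can turn it into the identity on $\ell$-sequences. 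So the double sum you analyze is missing one factor, and your claimed identity for $A_{a_1\wpr _0a_2}$ does not hold as stated; note also that the entries of $A_a$ in Lemma \ref{Lemma:PseudoCont} are samples of $V_\psi a$ with $\psi$ the canonical dual window, not channel-matrix entries of the form $(\op (a)\pi (\mabfk )\phi _2,\pi (\mabfj )\phi _1)$, so the ``insert a resolution of the identity between the operators'' picture would require a separate (unproved) norm equivalence for such mixed-window matrices.

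The paper closes exactly this gap: using \cite[Lemma 3.3]{CheSiTo} it shows that $C\in \mathbb{U}^{\infty ,r_0}_{(v_0)}(\ep \Lambda ^2\times \ep \Lambda ^2)$ for every $r_0>0$, with $\nm C{\mathbb{U}^{\infty ,r_0}_{(v_0)}}\asymp \nm {\phi _1}{M^{r_0}_{(v)}}<\infty$, and then applies the matrix product estimate of Lemma \ref{Lemma1} \emph{twice}, first to $A_{a_1}\circ C$ (which needs the extra weight relation \eqref{Eq:weight4}, obtained from moderateness of the $\vartheta _m$'s, in addition to \eqref{Eq:weight3}) and then to $(A_{a_1}\circ C)\circ A_{a_2}$. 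Your argument also needs this extra weight bookkeeping, which you cannot get from \eqref{Eq:weight1}--\eqref{Eq:weight2} alone once $C$ enters. Finally, your uniqueness remark is too weak: embedding both symbols into $M^\infty$ does not make $\wpr _0$ well defined there (the product of two $M^\infty$ symbols is not defined in general); one must keep the $M^{\infty ,r_0}_{(\omega _2)}$ structure on the second factor and invoke the known continuity of the product on $M^{\infty}_{(\omega _1)}\times M^{\infty ,r_0}_{(\omega _2)}$ from \cite{CheSiTo,Toft16}.
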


\par

We need some preparations for the proof. By \cite[Proposition 3.2]{CheSiTo}
it follows that the map $(A_1,A_2)\mapsto A_1\circ A_2$ is uniquely defined
and continuous from $\mathbb{U}^{\infty ,\infty}_{(\omega _1)}
(\Lambda \times \Lambda )\times
\mathbb{U}^{\infty ,r_0}_{(\omega _2)}(\Lambda \times \Lambda )$
to $\mathbb{U}^{\infty ,\infty}_{(\omega )}(\Lambda \times \Lambda )$
when $\Lambda \subseteq \rr d$ is a lattice, $r_0 \in(0,1]$ 
and $\omega, \omega _1, \omega _2 \in \mascP _E (\rdd )$ satisfy
\begin{equation}\label{Eq:MatrixWeighsComp}
\omega(x,z) \le \omega _1(x,y) \omega _2(y,z), \quad x,y,z\in \rd.
\end{equation}
The following lemma extends certain parts of this continuity to
matrix classes satisfying Orclicz estimates.

\par

\begin{lemma}\label{Lemma1}
Let $\Lambda \subseteq \rr d$ be a lattice,
$\Phi _1,\Phi _2$ be quasi Young functions of order $r_0 \in(0,1]$,
and let $\omega, \omega _1, \omega _2 \in \mascP _E (\rdd)$ satisfy
\eqref{Eq:MatrixWeighsComp}.
Then $(A_1,A_2)\mapsto A_1\circ A_2$
from $\mathbb{U}^{\infty ,\infty}_{(\omega _1)}(\Lambda \times \Lambda )\times
\mathbb{U}^{\infty ,r_0}_{(\omega _2)}(\Lambda \times \Lambda )$
to $\mathbb{U}^{\infty ,\infty}_{(\omega )}(\Lambda \times \Lambda )$
restricts to a continuous map from
$\mathbb{U}^{\Phi _1,\Phi _2}_{(\omega _1)}(\Lambda \times \Lambda )\times
\mathbb{U}^{\infty ,r_0}_{(\omega _2)}(\Lambda \times \Lambda )$
to $\mathbb{U}^{\Phi _1,\Phi _2}_{(\omega )}(\Lambda \times \Lambda )$ and
\begin{equation}\label{Eq:matrix}
  \Vert A_1\circ A_2\Vert _{\mathbb{U}^{\Phi _1,\Phi _2}_{(\omega )}}
  \lesssim
  \Vert A_1\Vert _{\mathbb{U}^{\Phi _1,\Phi _2}_{(\omega _1)}}
   \Vert A_2\Vert _{\mathbb{U}^{\infty ,r_0}_{(\omega _2)}}.
\end{equation}
\end{lemma}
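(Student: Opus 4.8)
The plan is to conjugate the matrix product by the twist $T$, convert it into an ordinary discrete convolution in a sequence Orlicz space built from a genuine Young function, and then invoke Lemma~\ref{Lemma:QuasBanachSeqConv}. Set $b_1=|T(a_1\omega _1)|$, $b_2=|T(a_2\omega _2)|$, and let $c(j,k)=\sum _l a_1(j,l)a_2(l,k)$ denote the entries of $A_1\circ A_2$. Writing $\nm \cdo {\ell ^{\Phi _j}}=(\nm {|\cdo |^{r_0}}{\ell ^{\Phi _{0,j}}})^{1/r_0}$ for Young functions $\Phi _{0,j}$ with $\Phi _j(t)=\Phi _{0,j}(t^{r_0})$, the quasi-norms $\nm \cdo {\ell ^{\Phi _1}}$ and $\nm \cdo {\ell ^{\Phi _2}}$ satisfy $r_0$-triangle inequalities and $\mathbb U^{\Phi _1,\Phi _2}_{(\omega )}$ is a quasi-Banach space of order $r_0$. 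First I would reduce to the case where all entries of $A_1$ and $A_2$ are non-negative, handling the general case by splitting $A_1$ and $A_2$ into the positive and negative parts of their real and imaginary parts exactly as in \eqref{Eq:DecompRealImagPosNeg} and the proof of Theorem~\ref{Thm:Aop}, estimating each of the sixteen products separately, and recombining via the $r_0$-triangle inequality in $\mathbb U^{\Phi _1,\Phi _2}_{(\omega )}$ together with the solidity bounds $\nm {A_{1,i}}{\mathbb U^{\Phi _1,\Phi _2}_{(\omega _1)}}\le \nm {A_1}{\mathbb U^{\Phi _1,\Phi _2}_{(\omega _1)}}$ and analogously for $A_2$.

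So assume $a_1(j,l),a_2(l,k)\ge 0$. Inserting \eqref{Eq:MatrixWeighsComp} in the form $\omega (j,j-k)\le \omega _1(j,l)\omega _2(l,j-k)$ and substituting $m=j-l$, a direct computation gives the pointwise bound
\begin{equation*}
|T(c\cdot \omega )(j,k)|\le \sum _m b_1(j,m)\,b_2(j-m,k-m)\le \sum _m b_1(j,m)\,g_2(k-m),
\end{equation*}
where $g_2(n)=\sup _l b_2(l,n)$, so that $\nm {g_2}{\ell ^{r_0}}=\nm {A_2}{\mathbb U^{\infty ,r_0}_{(\omega _2)}}$. Fixing $k$, applying the $r_0$-triangle inequality for $\nm \cdo {\ell ^{\Phi _1}}$ in the variable $j$, and setting $G_1(m)=\nm {b_1(\cdo ,m)}{\ell ^{\Phi _1}}$, this yields
\begin{equation*}
\left (\nm {T(c\cdot \omega )(\cdo ,k)}{\ell ^{\Phi _1}}\right )^{r_0}
\le \sum _m g_2(k-m)^{r_0}\,G_1(m)^{r_0}=(g_2^{r_0}*G_1^{r_0})(k).
\end{equation*}

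It remains to apply the $\ell ^{\Phi _2}$ quasi-norm in $k$ to both sides. Using $\nm {u^{1/r_0}}{\ell ^{\Phi _2}}=\nm u{\ell ^{\Phi _{0,2}}}^{1/r_0}$ for $u\ge 0$ and the solidity of $\ell ^{\Phi _2}$, this produces
\begin{equation*}
\nm {A_1\circ A_2}{\mathbb U^{\Phi _1,\Phi _2}_{(\omega )}}
=\nm {T(c\cdot \omega )}{\ell ^{\Phi _1,\Phi _2}}
\le \nm {g_2^{r_0}*G_1^{r_0}}{\ell ^{\Phi _{0,2}}}^{1/r_0}.
\end{equation*}
Since $\ell ^{\Phi _{0,2}}$ is a translation-invariant Banach sequence space, Lemma~\ref{Lemma:QuasBanachSeqConv} (applied with $r_0=1$) bounds the right-hand side by $\left (\nm {G_1^{r_0}}{\ell ^{\Phi _{0,2}}}\nm {g_2^{r_0}}{\ell ^1}\right )^{1/r_0}=\nm {G_1}{\ell ^{\Phi _2}}\nm {g_2}{\ell ^{r_0}}$, and the identities $\nm {G_1}{\ell ^{\Phi _2}}=\nm {b_1}{\ell ^{\Phi _1,\Phi _2}}=\nm {A_1}{\mathbb U^{\Phi _1,\Phi _2}_{(\omega _1)}}$ and $\nm {g_2}{\ell ^{r_0}}=\nm {A_2}{\mathbb U^{\infty ,r_0}_{(\omega _2)}}$ finish \eqref{Eq:matrix}; the continuity statement, and the fact that this map is the restriction of the one already defined on $\mathbb U^{\infty ,\infty}_{(\omega _1)}\times \mathbb U^{\infty ,r_0}_{(\omega _2)}$, follow from the inclusion $\mathbb U^{\Phi _1,\Phi _2}_{(\omega _1)}\subseteq \mathbb U^{\infty ,\infty}_{(\omega _1)}$. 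I expect the main obstacle to be purely organizational: keeping the index gymnastics forced by $T$ under control so that the composition really collapses to the twisted sum $\sum _m b_1(j,m)b_2(j-m,k-m)$ and hence to a bona fide convolution; once that is in place, the Orlicz-specific content is just the short reduction to the Young function $\Phi _{0,2}$ together with the classical convolution estimate of Lemma~\ref{Lemma:QuasBanachSeqConv}.
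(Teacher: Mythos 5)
Your argument is correct and follows essentially the same route as the paper's proof: twist by $T$, reduce to non-negative entries by splitting into positive/negative real and imaginary parts, insert the weight condition \eqref{Eq:MatrixWeighsComp}, take the supremum in the second factor, and conclude with the convolution estimate $\nm {c_1^{r_0}*c_2^{r_0}}{\ell ^{\Phi _{0,2}}}\le \nm {c_1^{r_0}}{\ell ^{\Phi _{0,2}}}\nm {c_2^{r_0}}{\ell ^1}$ exactly as in the paper. The only (harmless) deviation is in the intermediate step: you obtain $\nm {T(c\cdot \omega )(\cdo ,k)}{\ell ^{\Phi _1}}^{r_0}\le (g_2^{r_0}*G_1^{r_0})(k)$ from the $r_0$-triangle inequality for the Orlicz quasi-norm, while the paper derives the same bound by working with the modular and the convexity of $\Phi _{0,1}$; both are valid and lead to the identical conclusion.
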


\par

\begin{proof}
Let $A_1=(\mabfa _1(j,k))_{j,k\in \Lambda}$, $A_2=(\mabfa _2(j,k))_{j,k\in \Lambda}$
be matrices, let the matrix elements of $B=A_1\circ A_2$ be denoted by
$\mabfb(j,k)$, and set
\begin{alignat*}{1}
a_m(j,k) &\equiv |\mabfa _m(j,j-k)|\omega _m(j,j-k),\qquad m=1,2,
\intertext{and}
b (j,k) &\equiv |\mabfb(j,j-k)|\omega(j,j-k).
\end{alignat*}
Then
\begin{alignat}{1}
\Vert A_1\Vert _{\mathbb{U}^{\Phi _1,\Phi _2}_{(\omega _1)}}
&=
\Vert a_1\Vert _{\ell^{\Phi _1, \Phi _2}},
\quad
\Vert A_2\Vert _{\mathbb{U}^{\infty ,r_0}_{(\omega _2)}}
=\Vert a_2\Vert _{\ell^{\infty ,r_0}}
\notag
\\[1ex]
\Vert B\Vert _{\mathbb{U}^{\Phi _1,\Phi _2}_{(\omega )}}
&=
\Vert b \Vert _{\ell^{\Phi _1, \Phi _2}}
\notag
\intertext{and}
b (j,k)
&\le
\sum \limits _{m \in \zd} a_1(j,m )a_2(j-m ,k-m ).
\label{Eq:Cond.}
\end{alignat}
By a similar application of Beppo-Levis' theorem, and splitting
up $A_j$ as in Remark \ref{Rem:Aop}, the result follows if we prove
$$
\Vert b  \Vert _{\ell^{\Phi _1, \Phi _2}}
\le
\Vert a_1\Vert _{\ell^{\Phi _1, \Phi _2}}
\Vert a_2\Vert _{\ell^{\infty ,r_0}}.
$$
when $a_1,a_2\in \mathbb
U_0(\Lambda \times \Lambda)$ have
non-negative entries.

\par

Let $\Phi _{0,j}$ be Young functions such that
$\Phi _j(t)=\Phi _{0,j}(t^{r_0})$,
$t\ge 0$, $j=1,2$, and let
$$
c_1(m )=\Vert a_1(\cdo,m )\Vert _{\ell^{\Phi _1}}
\quad \text{and}\quad
c_2(k)=\sup _{j\in \Lambda}a_2(j,k )^{r_0}.
$$
By \eqref{Eq:Cond.} and the fact that $\Phi _{0,1}$ is convex
we get
\begin{multline*}
  \sum \limits _{j\in \Lambda} \Phi _{0,1}
  \left (
  \frac{|b (j,k)|^{r_0}}{\lambda ^{r_0}}
  \right )
  \le
  \sum \limits _{j\in \Lambda} \Phi _{0,1}
  \left (
  \frac 1{\lambda ^{r_0}}
  \sum \limits _{m \in \Lambda}
  a_1(j,m )^{r_0} a_2(j-m ,k-m )^{r_0}
  \right )
  \\[1ex]
  \le \sum \limits _{j\in \Lambda} \Phi _{0,1}
  \left (
  \frac 1{\lambda ^{r_0}}
  \sum \limits _{m \in \Lambda}
  a_1(j,m )^{r_0} c_2(k-m )^{r_0}
  \right )
  \\[1ex]
  =\sum \limits _{j\in \Lambda} \Phi _{0,1}
  \left (
  \sum \limits _{m \in \Lambda}
  \frac{a_1(j,k-m )^{r_0}}{\lambda ^{r_0}} c_2(m )^{r_0}
  \right )
  \\[1ex]
  \le \sum \limits _{j\in \Lambda}
  \left (
  \sum \limits_{m \in \Lambda} \Phi _{0,1}
  \left (
  \frac{a_1(j,k-m )^{r_0}}{\lambda ^{r_0}}
   \right )
  c_2(m )^{r_0}
  \right )
  \\[1ex]
  = \sum \limits _{m \in \Lambda} c_2(m )^{r_0}
  \sum \limits _{j\in \Lambda} \Phi _{0,1}
  \left (
  \frac{a_1(j,k-m )^{r_0}}{\lambda ^{r_0}}
  \right ).
\end{multline*}
This gives
\begin{equation}\label{Eq:Cond.2}
 \Vert b (\cdo ,k) \Vert ^{r_0}_{\ell ^{\Phi _1}}
 \le
 \sum \limits _{m \in \Lambda} c_2(m )^{r_0}
 \Vert a_1(\cdo ,k-m )\Vert ^{r_0}_{\ell ^{\Phi _1}}
 =(c_1^{r_0}*c_2^{r_0})(k),
\end{equation}
in view of the definition of $\ell^{\Phi _1,\Phi _2}$ norm.

\par

By \eqref{Eq:Cond.2} we get
\begin{multline*}
\Vert B\Vert_{\mathbb{U}^{\Phi _1,\Phi _2}_{(\omega )}}
=
\nm b {\ell^{\Phi _1,\Phi _2}}
\le
\left (
\big \Vert c_1^{r_0}*c_2^{r_0} \big \Vert_{\ell ^{\Phi _{0,2}}}
\right)^{\frac 1{r_0}}
\\[1ex]
\le
\left (
\Vert c_1^{r_0}\Vert _{\ell ^{\Phi _{0,2}}}\Vert c_2^{r_0}\Vert _{\ell ^1}
\right )^{\frac 1{r_0}}
=
\Vert a_1\Vert _{\ell ^{\Phi _1,\Phi _2}}
\Vert a_2\Vert _{\ell ^{\infty , r_0}}
\\[1ex]
=
\Vert A_1\Vert_{\mathbb{U}^{\Phi _1,\Phi _2}_{(\omega _1)}}
\Vert A_2\Vert_{\mathbb{U}^{\infty ,r_0}_{(\omega _2)}}. \qedhere
\end{multline*}
\end{proof}

\par

\begin{proof}[Proof of Theorem \ref{Thm:main}]
By Proposition \ref{Prop:CalculiTransferOrlModSp}
we may assume that $A=0$.

\par

Let $\ep >0$, $\phi _1$, $\phi _2$ and $\Lambda$ be the same as in
the proofs of Theorem \ref{Thm:PseudoCont} and Lemma
\ref{Lemma:PseudoCont},
$a_1\in M^{\Phi _1,\Phi _2}_{(\omega _1)}(\rdd )$ and
$a_2\in M^{\infty ,r_0}_{(\omega _2)}(\rdd )$.
By Theorem 2.17 we have
\begin{equation}\label{Eq:Cond.3}
\Vert a_1 \Vert _{M^{\Phi _1,\Phi _2}_{(\omega _1)}}
\asymp
\Vert A_1\Vert _{\mathbb{U}^{\Phi _1,\Phi _2}_{(\vartheta _1)}},
\quad
\Vert a_2 \Vert _{M^{\infty ,r_0}_{(\omega _2)}}
\asymp
\Vert A_2\Vert _{\mathbb{U}^{\infty ,r_0}_{(\vartheta _2)}}
\end{equation}
$$
\op(a_1)=D_{\phi _1}^{\ep ,\Lambda}\circ A_1\circ C_{\phi _2}^{\ep ,\Lambda}
\quad \text{and}\quad
\op(a_2)=D_{\phi _1}^{\ep ,\Lambda}
\circ A_2\circ C_{\phi _2}^{\ep ,\Lambda},
$$
where
$$
A_m=(\mabfa_m(\mabfj ,\mabfk ))
_{\pmb{{j},\mabfk }\in \ep \Lambda ^2},
$$
$$
\mabfa_m(\mabfj ,\mabfk )\equiv e^{i\scal{k-j}{\kappa}}
V_\varphi a_m(j,\kappa,\iota-\kappa,k-j),
\qquad 
\mabfj  = (j,\iota )\in \ep \Lambda ^2,\ \mabfk 
=(k,\kappa ) \in \ep \Lambda ^2
$$
and
$$
\vartheta_m(x,\xi,y,\eta)=\omega_m(x,\eta,\xi-\eta,y-x).
$$
The condition \eqref{Eq:weight1} means for the weights $\vartheta _m$,
$m=0,1,2$,
\begin{equation}\label{Eq:weight3}
\vartheta _0(X,Y)\lesssim \vartheta_1(X,Z) \vartheta_2(Z,Y),\quad X, Y, Z \in \rdd .
\end{equation}
Pick $v_1 \in \mascP_E(\rd)$ such that $\omega_2$ is $v_2$-moderate,
where
$$
v_2 = v_1\otimes v_1\otimes v_1 \otimes v_1 \in \mascP_E(\rr {4d}).
$$
Also let $v =v^2_{1}\otimes v^2_{1} \in \mascP_E(\rdd)$ and
$$
v_0(X,Y)=v(X-Y) \in \mascP_E(\rr {4d}),\quad X,Y \in \rdd.
$$
Then
\begin{equation}\label{Eq:weight4}
\vartheta_2(X,Y)\lesssim
v_0(X,Z)\vartheta_2(Z,Y),\quad X, Y, Z \in\rdd.
\end{equation}

\par

By \eqref{tWigpseudolink} and \eqref{wprtdef} we get
$$
\op(a_1)\circ\op(a_2)=D_{\phi _1}^{\ep ,\Lambda}
\circ A\circ C_{\phi _2}^{\ep ,\Lambda},
$$
where
$$
A=A_1\circ C\circ A_2
$$
and $C=C_{\phi _2}^{\ep ,\Lambda}\circ 
D_{\phi _1}^{\ep ,\Lambda}$
is a matrix of the form
$(\pmb{c}(\mabfj ,\mabfk ))_{\mabfj ,\mabfk \in \ep \Lambda ^2}$
with matrix elements $\pmb{c}(\mabfj ,\mabfk )$,
$\mabfj ,\mabfk \in \ep \Lambda ^2$.

\par

By \cite[Lemma 3.3]{CheSiTo} we get
\begin{multline*}
  \Vert C\Vert_{\mathbb{U}^{\infty ,r_0}_{(v_0)}}
  =\left(
  \sum\limits_{\mabfk \in \ep \Lambda ^2}
  \left(
  \sup\limits_{\mabfj \in \ep \Lambda ^2}
  |\pmb{c}(\mabfj ,\mabfj -\mabfk )v(\mabfk )|^{r_0}
  \right)
  \right)^{\frac 1{r_0}}
  \\[1ex]
  =\left (
  \sum \limits _{\mabfk \in \ep \Lambda ^2}
  |V_{\phi _2}\phi _1(\mabfk )v(\mabfk )|^{r_0}
  \right)^{\frac 1{r_0}}
  \asymp \Vert \phi _1\Vert_{M^{r_0}_{(v)}}< \infty .
\end{multline*}
Thus
$$
C\in \bigcap_{r_0>0}\mathbb{U}^{\infty ,r_0}_{(v_0)}
(\ep \Lambda ^2\times \ep \Lambda ^2).
$$
Then we obtain from
Lemmas \ref{Lemma:PseudoCont} and \ref{Lemma1}
\begin{multline*}
\Vert a_1 \wpr_0
 a_2 \Vert_{M^{\Phi _1,\Phi _2}_{(\omega_0)}}
\asymp \Vert A_1 \circ C \circ A_2\Vert_{\mathbb{U}^{\Phi _1,\Phi _2}_{(\vartheta_0)}}
\\[1ex]
\le \Vert A_1 \circ C\Vert_{\mathbb{U}^{\Phi _1,\Phi _2}_{(\vartheta_1)}}
\Vert A_2\Vert_{\mathbb{U}^{\infty ,r_0}_{(\vartheta_2)}}
\le \Vert A_1\Vert_{\mathbb{U}^{\Phi _1,\Phi _2}_{(\vartheta_1)}}
\Vert C\Vert_{\mathbb{U}^{\infty ,r_0}_{(v_0)}}
\Vert A_2\Vert_{\mathbb{U}^{\infty ,r_0}_{(\vartheta_2)}}
\\[1ex]
\lesssim \Vert A_1\Vert_{\mathbb{U}^{\Phi _1,\Phi _2}_{(\vartheta_1)}}
\Vert A_2\Vert_{\mathbb{U}^{\infty ,r_0}_{(\vartheta_2)}}
\asymp \Vert a_1\Vert_{M^{\Phi _1,\Phi _2}_{(\omega_1)}}
\Vert a_2\Vert_{M^{\infty ,r_0}_{(\omega_2)}}.\qedhere
\end{multline*}
\end{proof}

\par


\begin{thebibliography}{99}
\bibitem{AbCaTo} A. Abdeljawad, M. Cappiello, J. Toft
\emph{Pseudo-differential calculus in anisotropic Gelfand-Shilov setting}
Integr. Equ. Oper. Theory \textbf{91} (2019), 91:26.


\bibitem{CheSiTo} Y. Chen, J. Toft, P. Wahlberg
\emph{The Weyl product on quasi-Banach modulation spaces}
Bull. Math. Sci. \textbf{9} (2019), 1950018--1.

\bibitem{ChuChuKim} {J. Chung, S.-Y. Chung, D. Kim}
\emph{Characterizations of the Gelfand-Shilov spaces via
Fourier transforms}, Proc. Amer. Math. Soc.
\textbf{124} (1996), 2101--2108.


\bibitem{CorNic} E. Cordero, F. Nicola
\emph{Pseudodifferential operators on
$L^p$, Wiener amalgam and modulation spaces},
Int. Math. Res. Not. IMRN 8 (2010), 1860--1893. 

\bibitem{CorNic2} E. Cordero, F. Nicola 
\emph{Sharp integral bounds for Wigner distributions},
Int. Math. Res. Not. IMRN  \textbf{6} (2018), 1779--1807.

%
%
%

\bibitem{F1}  H. G. Feichtinger \emph{Modulation spaces on locally
compact abelian groups. Technical report}, {University of
Vienna}, Vienna, 1983; also in: M. Krishna, R. Radha,
S. Thangavelu (Eds) Wavelets and their applications, Allied
Publishers Private Limited, NewDehli Mumbai Kolkata Chennai Nagpur
Ahmedabad Bangalore Hyderbad Lucknow, 2003, pp. 99--140.

\bibitem{Fei5} H. G. Feichtinger \emph{Modulation spaces: Looking
back and ahead}, Sampl. Theory Signal Image Process. \textbf{5}
(2006), 109--140.

\bibitem{FeiGro1}  {H. G. Feichtinger,  K. H. Gr{\"o}chenig}
\emph{Banach spaces related to integrable group representations and
their atomic decompositions, I}, J. Funct. Anal., \textbf{86}
(1989), 307--340.

\bibitem{FeiGro2} {H. G. Feichtinger, K. H. Gr{\"o}chenig}
\emph{Banach spaces related to
integrable group representations and their atomic decompositions, II},
Monatsh. Math., \textbf{108} (1989), 129--148.

%

\bibitem{GaSa} Y. V. Galperin, S. Samarah \emph{Time-frequency analysis
on modulation spaces $M^{p,q}_m$, $0<p,q\le \infty$}, Appl. Comput.
Harmon. Anal. \textbf{16} (2004), 1--18.


\bibitem{GS} I. M. Gelfand, G. E. Shilov \emph{Generalized functions, II-III},
Academic Press, NewYork London, 1968.

%
%

\bibitem{Gc2} {K. H. Gr{\"o}chenig} \newblock \emph{Foundations of Time-Frequency Analysis},
\newblock Birkh{\"a}user, Boston, 2001.

\bibitem{Gc2.5} K. Gr\"ochenig \emph{Weight functions
in time-frequency analysis
\rm {in: L. Rodino, M. W. Wong (Eds) Pseudodifferential
Operators: Partial Differential Equations and Time-Frequency Analysis}},
Fields Institute Comm., \textbf{52} 2007, pp. 343--366.

\bibitem{GH1}  {K. H. Gr{\"o}chenig and C. Heil} \emph
{Modulation spaces and pseudo-differential operators}, Integral
Equations Operator Theory (4) \textbf{34} (1999), 439--457.

\bibitem{GH2} {K. H. Gr{\"o}chenig and C. Heil} \emph {Modulation spaces
as symbol classes for pseudodifferential operators {\rm {in: M. Krishna, R. Radha,
S. Thangavelu (Eds) Wavelets and their applications}}}, Allied
Publishers Private Limited, NewDehli Mumbai Kolkata Chennai Hagpur
Ahmedabad Bangalore Hyderbad Lucknow, 2003, pp. 151--170.

\bibitem{GZ} K. Gr{\"o}chenig, G. Zimmermann
\emph{Spaces of test functions via the STFT} J. Funct. Spaces Appl. \textbf{2}
(2004), 25--53.

\bibitem{HaH} P. Harjulehto, P. H{\"a}st{\"o},
\emph{Orlicz Spaces and Generalized Orlicz Spaces}
Springer, (2019).


\bibitem{Ho1} L. H{\"o}rmander \emph{The Analysis of Linear
Partial Differential Operators}, vol {I--III},
Springer-Verlag, Berlin Heidelberg NewYork Tokyo, 1983, 1985.

%

\bibitem{Leo} C. L{\'e}onard
\emph{Orlicz spaces}, (preprint). 

\bibitem{LieSol}  E. H. Lieb, J. P. Solovej
\emph{Quantum coherent operators: a generalization
of coherent states}
Lett. Math. Phys. \textbf{22} (1991), 145--154. 

\bibitem{LozPerTask} Z. Lozanov-Crvenkovi{\'c}, D. Peri{\v{s}}i{\'c},
M. Taskovi{\'c} \emph{Gelfand-Shilov spaces structural and kernel
theorems}, (preprint), arXiv:0706.2268v2.



\bibitem{MajLab1}
W. A. Majewski, L. E. Labuschagne
\emph{On applications of Orlicz spaces to statistical physics},
Ann. Henri Poincar{\'e} \textbf{15} (2014), 1197--1221.

\bibitem{MajLab2}
W. A. Majewski, L. E. Labuschagne
\emph{On entropy for general quantum systems},
Adv. Theor. Math. Phys. \textbf{24} (2020), 491--526.

\bibitem{Pil1} S. Pilipovi\'c \emph{Generalization of Zemanian spaces
of generalized functions which
have orthonormal series expansions},
SIAM J. Math. Anal. \textbf{17} (1986), 477--484.


\bibitem{Pil3}
S. Pilipovi\'c \emph{Tempered ultradistributions},
Boll. U.M.I. \textbf{7} (1988), 235--251.

\bibitem {Rao}
 M. M. Rao, Z. D. Ren, \textit{Theory of Orlicz Spaces},
 Marcel Dekker, New York, 1991.

\bibitem{Rau1} H. Rauhut \emph{Wiener amalgam spaces with respect
to quasi-Banach spaces}, Colloq. Math. \textbf{109} (2007), 345--362.

\bibitem{Rau2} H. Rauhut \emph{Coorbit space theory for quasi-Banach
spaces}, Studia Math. \textbf{180} (2007), 237--253.

%
%
%
%
%
%

\bibitem{SchF} C. Schnackers, H. F{\"u}hr \emph{Orlicz Modulation Spaces},
Proceedings of the 10th International Conference on Sampling Theory and Applications.

\bibitem{Sh}
M. Shubin \emph{Pseudodifferential operators and the spectral
theory}, Springer Series in Soviet Mathematics,  Springer Verlag,
Berlin 1987.

%

\bibitem{Toft2} {J. Toft} \emph{Continuity properties for
modulation spaces with applications to pseudo-differential calculus,
I}, {J. Funct. Anal. (2)}, \textbf{207} (2004),
399--429.


\bibitem{Toft10} J. Toft \emph{The Bargmann transform on modulation and
Gelfand-Shilov spaces, with applications to Toeplitz and
pseudo-differential operators}, J. Pseudo-Differ. Oper. Appl.
\textbf{3} (2012), 145--227.

\bibitem{Toft15} J. Toft
\emph{Gabor analysis for a broad class of quasi-Banach modulation
spaces {\rm {in: S. Pilipovi{\'c}, J. Toft (eds)}},
Pseudo-differential operators, generalized functions},
Operator Theory: Advances and Applications \textbf{245},
Birkh{\"a}user, 2015, 249--278.

\bibitem{Toft16} J. Toft
\emph{Continuity and compactness for pseudo-differential
operator with symbols in quasi-Banach spaces or H{\"o}rmander classes},
Analysis and Applications,  \textbf{15} (2016), 353--389.

\bibitem{Toft17}  J. Toft
\emph{Matrix parameterized pseudo-differential calculi on modulation spaces
{\rm {in: M. Oberguggenberger, J. Toft, J. Vindas, P. Wahlberg (eds)}},
Generalized functions and Fourier analysis},
Operator Theory: Advances and Applications \textbf{260}
Birkh{\"a}user, 2017, pp. 215--235.

\bibitem{Toft18} J. Toft
\emph{Images of function and distribution spaces under the
Bargmann transform}, J. Pseudo-Differ. Oper. Appl. \textbf{8} (2017),
83--139.


\bibitem{Toft19} J. Toft
\emph{Schatten properties, nuclearity and minimality of phase
shift invariant spaces}, Appl. Comput. Harmon. Anal. \textbf{46} (2019),
154--176.

\bibitem{ToUsNaOz}
J. Toft, R. {\"U}ster, E. Nabizadeh, S. {\"O}ztop
\emph{Continuity properties and Bargmann mappings of
quasi-Banach Orlicz modulation spaces}, (preprint),
arXiv:1907.02331.


\bibitem{Tr} G. Tranquilli \emph{Global normal forms and global
properties in
function spaces for second order Shubin type operators}
PhD Thesis, 2013.

\end{thebibliography}
\end{document}